\documentclass[reqno,10pt]{amsart}
\usepackage{amssymb,amsmath,amsthm,}
\usepackage{a4wide}
\usepackage{amscd}
\usepackage{amsfonts}
\usepackage{amssymb}
\usepackage{latexsym}
\usepackage{color}
\usepackage{esint}
\usepackage{graphicx}
\usepackage{caption}
\usepackage{subcaption}
\usepackage{amsthm}
\usepackage{verbatim}
\usepackage{hyperref}

\usepackage[makeroom]{cancel}
\usepackage{mathtools}

\setcounter{MaxMatrixCols}{10}
\setcounter{tocdepth}{2}


\topmargin -0.20in%
\oddsidemargin 0.05in%
\evensidemargin 0.05in%
\textwidth 16.6cm%
\textheight 23.1cm%


\let\hide\iffalse

\newtheorem{theorem}{Theorem}[section]

\newtheorem{lemma}[theorem]{Lemma}

\newtheorem{remark}[theorem]{Remark}

\renewcommand{\S}{\mathbb{S}}

\newcommand{\be}{\begin{equation}}
\newcommand{\bm}{\begin{multline}}
\newcommand{\ee}{\end{equation}}

\newcommand{\Bes}{\begin{eqnarray*}}
	\newcommand{\Ees}{\end{eqnarray*}}
\newcommand{\Be}{\begin{equation}}
\newcommand{\Ee}{\end{equation}}



\def\B{\begin{equation}}
\def\E{\end{equation}}
\def\BN{\begin{eqnarray*}}
\def\EN{\end{eqnarray*}}


\numberwithin{equation}{section}

\usepackage{tikz-cd}
\usepackage{ragged2e}
\usepackage{etoolbox}
\usepackage{lipsum}

\usepackage{color}

\title[] {High-velocity tails of the inelastic and the multi-species mixture Boltzmann equations}
\author[] {Gayoung An and Donghyun Lee}

	\address{Department of Mathematics, POSTECH, Pohang-si, Gyeongsangbuk-do, 37673 Republic of Korea
		\\agy19@postech.ac.kr}
	\address{Department of Mathematics, POSTECH, Pohang-si, Gyeongsangbuk-do, 37673 Republic of Korea
	\\donglee@postech.ac.kr}

\begin{document}
\begin{abstract}
    We study high-velocity tails of some homogeneous Boltzmann equations on $v \in \mathbb{R}_{v}^d$. First, we consider spatially homogeneous {\it Inelastic} Boltzmann equation with {\it noncutoff} collision kernel, in the case of moderately soft potentials. We also study spatially homogeneous mixture Boltzmann equations : for both noncutoff collision kernel with moderately soft potentials and cutoff collision kernel with hard potentials. In the case of noncutoff {\it Inelastic} Boltzmann, we obtain 
    \[
    	f(t,v) \geq a(t) e^{-b(t)|v|^p}, \quad 2 < p < 6.213
    \]
 	by extending Cancellation lemma \cite{AD2000} and spreading lemma \cite{IM2020} and assuming $f\in C^{\infty}$. For the Mixture type Boltzmann equations, we prove Maxwellian $p=2$.
\end{abstract}

\maketitle
\tableofcontents

\section{Introduction}
The classical Boltzmann equation describes the statistical behavior of a rarefied collisional gas, consisting of a very large number of identical
particles and perfect elastic binary collisions. The function $f(t,x,v)$ is a density of particles at time $t$, at point $x$, having speed $v$. The equation is a partial differential equation of $f(t,x,v)$ with nonlinear quadratic term $Q(f,f)(t,x,v).$ In this paper, we study two different types of the Boltzmann equation : Inelastic Boltzmann equation (no energy conservation) and the multi-species Mixture Boltzmann. \\

\noindent($\mathbf{I}$) {\bf The Inelastic(mono-species) Boltzmann equation model} : 
Let $v$ and $v_*$ be the velocities of two particles before the collision, and $v'$ and $v'_*$ be the velocities after the collision. The particles hit each other with the unit vector ${n}\in \S^{d-1}_{+}$ and assume that
\begin{align} \label{alpha}
\langle v'-v_*',n \rangle &= -\alpha \langle v-v_*,n \rangle, \notag \\
(v'-v'_*)-\langle v'-v_*',n \rangle {n} &= (v-v_*)-\langle v-v_*,n \rangle {n},
\end{align}
where $\alpha$ is the coefficient of normal restitution, $0<\alpha<1$. In particular, we denote 
\Be \label{def beta}
	\beta = \frac{1+\alpha}{2},\quad \frac{1}{2} < \beta < 1. 
\Ee 
Postcollisional velocities $v',v'_*$ can be expressed as
\begin{equation} \label{v'_I}
\begin{split}
    v' &= v-\beta\langle v-v_*,n \rangle {n}
    =\frac{v+v_*}{2}+\frac{1-\beta}{2}(v-v_*)+\frac{\beta}{2}\;|v-v_*|{\sigma},  \\
     v_*' &= v_*+\beta\langle v-v_*,n \rangle {n}=\frac{v+v_*}{2}-\frac{1-\beta}{2}(v-v_*)-\frac{\beta}{2}\;|v-v_*|{\sigma}
\end{split}
\end{equation}
for the angular parameter ${n} \in \mathbb{S}^{d-1}_{+}, \; {\sigma} \in \mathbb{S}^{d-1}.$
It is important to note that unlike to elastic collision, there is no symmetry between $0 < \frac{v-v_*}{|v-v_*|} \cdot \sigma < \frac{\pi}{2}$ and $\frac{\pi}{2} < \frac{v-v_*}{|v-v_*|} \cdot \sigma < \pi$. i.e., taking minus on ${\sigma}$ does not reverse $v'$ and $v_*'$($v' \nleftrightarrow v_*'$ when ${\sigma} \leftrightarrow -{\sigma}$). Also, the collision is not time-reversible and $|v'-v'_*| \neq|v-v_*|$.
\vspace{3mm}

\noindent The spatially homogeneous Inelastic Boltzmann equation is described by 
\begin{align} \label{BT_I}
    \partial_t f (t,v) = Q(f,f) (t,v),
\end{align} 
where $v \in \mathbb{R}^d, \; t \in \mathbb{R}_{+}.$ The function $f(t,v)$ is a density of particles of velocity $v$ and time $t$. The Inelastic collision operator $Q(f,f) (t,v)$ in \eqref{BT_I} satisfies mass and momentum conservation, but not for energy. 
We write 
\begin{align} \label{hydro_I}
    \int f(t,v) dv = \int f_0(v) dv \doteq M_0,\quad \int f(t,v)|v|^2 dv \leq \int f_0(v)|v|^2 dv \doteq E_0
\end{align} 
for some finite initial mass and energy $M_0, E_0$. \vspace{3mm}

\noindent Next, we introduce the weak form of the Inelastic Boltzmann collision operator $Q(f,f)(t,v)$,
\begin{align} \label{Q_nc_I}
     \int Q(f,g)(v) \phi(v) \; dv &= \int B(|v-v_*|, \cos \theta)f(v_*)g(v)(\phi(v')-\phi(v))\: d\sigma dv_*dv, \notag \\
     \quad &\text{where }\quad \cos\theta = \langle\frac{v-v_*}{|v-v_*|}, \sigma\rangle, \quad \theta \in [0, \pi].
\end{align}
Here, we take a suitable regular test function $\phi(v)$ (which has proper decaying property at infinity) and $v'$ is given by \eqref{v'_I}. We define $B(|v-v_*|, \cos \theta)$ as
\begin{equation} \label{B_nc_I}
    B(|v-v_*|,\cos \theta)=|v-v_*|^{\gamma}b(\cos\theta), \quad b(\cos\theta) \approx_{\theta \sim 0} |\theta|^{-(d-1)-2s} \tilde{b}(\cos\theta), \quad
    \int  \tilde{b}(\cos\theta) \;d\sigma < +\infty
\end{equation} with $\gamma>-d,$\; $s\in(0,1).$ The $\tilde{b}(\cos\theta)$ is smooth on $[0,\pi]$ and positive on $[0,\pi).$
When $d \geq 2$, the following condition  
\begin{align} \label{soft_I}
    \gamma<0, \quad \gamma+2s \in [0,2] \quad \text{for} \quad d\geq 2
\end{align}
 is called the moderately soft potentials in \cite{IM2020, V2002}. \vspace{7mm}

\noindent ($\mathbf{M}$) {\bf The (elastic) multi-species Mixture Boltzmann equation model} : There are $N$ different types particles in the system and each particle has mass $m_{j}$ for $j=1,2 \dots N.$ Suppose that a particle with mass $m_i$ and velocity $v$ collides with another particle with mass $m_j$ and velocity $v_*$.  We use $v^{\prime}$ and $v_{*}^{\prime}$ to denote postcollisinoal velocities. The laws of conservation of momentum and energy can be stated as follows :
\begin{align} \label{coll_M}
\begin{split}
     m_i v + m_j v_* &=  m_i v^{\prime} + m_j v_{*}^{\prime} \\
    m_i |v|^2 + m_j |v_{*}|^2 &=  m_i |v^{\prime}|^2 + m_j |v_{*}^{\prime}|^2. 
\end{split}
\end{align}
Let ${n}$ denote collision angle. The postcollisional velocities $v^{\prime},v^{\prime}_{*}$ can be written by 
\begin{align} \label{v'_M}
\begin{split}
    v' &=v-\frac{2m_j}{m_i+m_j}\langle v-v_*, n \rangle {n}= \frac{m_i}{m_i+m_j}v+\frac{m_j}{m_i+m_j}v_*+\frac{m_j}{m_i+m_j}|v-v_*|\sigma, \\
    v_*' &= v_*+\frac{2m_i}{m_i+m_j}\langle v-v_*, n \rangle {n}=\frac{m_i}{m_i+m_j}v+\frac{m_j}{m_i+m_j}v_*-\frac{m_i}{m_i+m_j}|v-v_*|\sigma
\end{split}
\end{align}
for angular parameter ${n} \in \mathbb{S}^{d-1}_{+}, \; {\sigma} \in \mathbb{S}^{d-1}.$ The collision is time-reversible and $|v'-v'_*| = |v-v_*|$. However, the collision is not symmetric when $0 < \frac{v-v_*}{|v-v_*|} \cdot \sigma < \frac{\pi}{2}$ and $\frac{\pi}{2} < \frac{v-v_*}{|v-v_*|} \cdot \sigma < \pi$ ($v' \nleftrightarrow v_*'$ when ${\sigma} \leftrightarrow -{\sigma}$) unlike the elastic collision between two identical particles.  \\

\noindent The spatially homogeneous (elastic) Mixture Boltzmann equation is described by 
\begin{align} \label{BT_M}
    \partial_t f_i (t,v) = \sum_{j=1}^{N} Q_{ji}(f_j,f_i) (t,v), 
\end{align} where $v \in \mathbb{R}^d, \; t \in \mathbb{R}_{+}.$ The $f_i(t,v)$ is a density of particles of mass $m_i$ at time $t$ and velocity $v$ for $1 \leq i \leq N$. And we assume $m_1<m_2 \cdots < m_N$ WLOG. If $i \neq j$, $Q_{ji}(f_j,f_i)(t,v)$ is the Mixture Boltzmann collision operator for $1 \leq j \leq N.$ If $i=j$, $Q_{ii}(f_i,f_i)(t,v)=Q(f_{i}, f_{i})(t,v)$ is standard Boltzmann collision operator. The Mixture collision operator in \eqref{BT_M} satisfies mass, momentum, and energy conservations. Summing initial values of mass, energy, and entropy of density functions for $1 \leq i \leq N$, we define 
\begin{align} \label{hydro_M}
\sum_{j=1}^{N} \int f_j(0,v) dv \doteq M_0, \quad
\sum_{j=1}^{N} \int f_j(0,v)|v|^{2} dv \doteq E_0\quad
\sum_{j=1}^{N} \int f_j(0,v)\log f_j(0,v) dv \doteq H_0
\end{align} 
for some finite values $M_0$, $E_0$, and $H_0$. \\

$(\lowercase\expandafter{\romannumeral1})$ With noncutoff collision kernel, the Mixture collision operator $Q_{ji}(f_j, f_i) (t,v)$ is written by  
\begin{align} \label{Q_nc_M}
    Q_{ji}(f_j, f_i) (t,v) &= \int B_{ji}(|v-v_*|, \cos \theta)(f_j(v_*')f_i(v')-f_j(v_*)f_i(v)) \; d\sigma dv_*, \notag \\ \quad &\text{where} \quad \cos\theta = \langle\frac{v-v_*}{|v-v_*|}, \sigma\rangle, \quad \theta \in [0, \pi].
\end{align} Here, $v'$ and $v_*'$ are given by \eqref{v'_M}.
We define $B_{ji}(|v-v_*|,\cos \theta )$ as
\begin{equation} \label{B_nc_M}
    B_{ji}(|v-v_*|,\cos \theta)=|v-v_*|^{\gamma}b_{ji}(\cos\theta), \quad b_{ji}(\cos\theta) \approx_{\theta \sim 0} |\theta|^{-(d-1)-2s} \tilde{b}_{ji}(\cos\theta), \quad
    \int  \tilde{b}_{ji}(\cos\theta) \;d\sigma < +\infty
\end{equation} with $\gamma>-d,\;s\in(0,1).$ The $\tilde{b}_{ji}(\cos\theta)$ is smooth on $[0,\pi]$ and positive on $[0,\pi).$
The values of $d, \gamma$ and $s$ satisfy the moderately soft potentials condition, \eqref{soft_I}. \vspace{3mm}

$(\lowercase\expandafter{\romannumeral2})$ With cutoff collision kernel, for $v \in \mathbb{R}^3$, the Mixture Boltzmann collision operator $Q_{ji}(f_j,f_i)(t,v)$ is written by 
\begin{align} \label{Q_c_M}
   Q_{ji}(f_j, f_i) (t,v) &= \int B_{ji}(|v-v_*|, \theta)(f_j(v_*')f_i(v')-f_j(v_*)f_i(v)) \; dn dv_*, \notag \\ \quad &\text{where} \quad \cos\theta = \langle\frac{v-v_*}{|v-v_*|}, n\rangle, \quad \theta \in [0, \frac{\pi}{2}].
\end{align} Here, $v'$ and $v_*'$ are given by \eqref{v'_M}. We define the $B_{ji}(|v-v_*|, \theta)$ as
\begin{align}  \label{B_c_M}
 B_{ji}(|v-v_*|,\theta) = h_{ji}(\theta)|v-v_*|^\gamma ,\quad \int_0^{\pi/2}h_{ji}(\theta) \;d\theta < +\infty 
\end{align} for hard potentials, $\gamma \in [0,1].$ \vspace{7mm}

In 1932, Carleman  proved a lowerbound for the spatially homogeneous Boltzmann equation for hard potentials with cutoff in dimension 3 at first in \cite{CT1932}. In 1997, Pulvirenti and Wennberg proved that the form of the lowerbound is exactly a Maxwellian in \cite{AB1996}. The lowerbound is uniform on time when $t>t_0$ for any positive time $t_0$ and depends on initial mass, energy, and entropy. In 2005, Mouhot extended the result to the full Boltzmann equation in the torus, $(x,v) \in \mathbb{T}_{x}^{N}\times \mathbb{R}_{v}^{N}$ in \cite{M2005}. So, he proved the lowerbound that the exponential power of $v$ is $2+\epsilon$ for small $\epsilon$ without cutoff. In 2020, Imbert, Mouhot, and Silvestre proved the Gaussian lowerbounds for the Boltzmann equation in the torus without cutoff under only controlling the natural local hydrodynamic quantities in  \cite{IM2020}. In addition, Imbert and Silvestre obtained $C^{\infty}$ estimates for the inhomogeneous Boltzmann equation without cutoff in \cite{IS2021}, 2021. Before, Desvillettes and Villani proved the solutions converging to equilibrium under two assumptions that the solution stays $C^{\infty}$ and is bounded below by some fixed Maxwellian in \cite{LC2005}, 2005. Therefore, we derive the solutions which converge to equilibrium under controlling natural local hydrodynamic quantities.  \\

We briefly introduce some of the studies on the Inelastic Boltzmann equation. In 2004, Gamba, Panferov, and Villani studied the spatially homogeneous Inelastic Boltzmann equation for hard spheres with diffusive term. They proved existence, smoothness and uniqueness of the solution and gave pointwise lowerbound estimates in \cite{IV2004}. Bobylev, Gamba, and Panferov studied the model with zero external forcing term or three types of nonzero external forcing term. They proved the exponential tail of order of steady velocity distribution range from 1 to 2 in \cite{AIV2004}. In 2006, Mischler, Mouhot, and Ricard developed the Cauchy theory with zero external forcing and proved that the solutions converge to the Dirac mass in weak* measure sense when $t \rightarrow \infty$($=$cooling process) in \cite{MMR2006}. Next, Mischler, and Mouhot proved the existence and uniqueness of the self-similar solution and time asymptotic convergence of the solution toward the self-similar solution in \cite{SC2006} and \cite{SC2009}. On the other hand, in 2016, Briant, and Daus studied the Cauchy theory and proved exponential trend to equilibrium for the homogeneous a multi-species mixture Boltzmann equation in \cite{ME2016}. Alonso and Orf studied a \textit{priori} estimates for long range interactions for hard potentials in 2022, \cite{AO2022}. \\

In this paper, we study the spatially homogeneous Inelastic Boltzmann equation without cutoff. In preliminaries, we consider the points $P$ and $Q$ which satisfy  $P-v\;\bot\;P-v_*$ and $v'-Q\;\bot\; v'-v$, respectively, when $v,v_*,v'$, and $v_*'$ are given in \eqref{v'_I}. (See Figure \ref{pre_I_F}.) We split the collision operator into singular and nonsingular parts in \eqref{split_nc_I}. The singular parts, $Q^{s}(f,f)$ changes into Carleman alternative representation form and is expressed by non symmetric function, $K_f(u,u')$ in \eqref{Kf_nc_I}. For test function $\psi(v)$, we estimate on $Q^{s}(f,\psi)(v)$  under the condition of moderately soft potentials. We prove that the Cancellation lemma of nonsingular parts, $Q^{ns}(f,f)$ and $Q^{ns}(f,f)$ is also positive and well-defined. We change the form of the Inelastic collision operator into \eqref{pos_nc_2_I} and apply the maximum principle (e.g. p.103, Villani's note, \cite{V2002}), then we prove that $f(t,v)$ is strictly positive under the condition, $f \in C^{\infty}$. Using the geometric relation of points $v,v_*$, and the point $Q$, we estimate the integration of the region of $v$ and $v_*$ when $v'$ is fixed. Lastly, using \cite{IM2020} and aforementioned lemmas, we prove the spreading lemma and find the lowerbound of the inelastic model. \\

Next, we study the spatially homogeneous Boltzmann equation for multi-species elastic particles, i.e., mixture, without cutoff. 
We split the collision operator $Q_{ji}(f_j,f_i)$ in \eqref{BT_M} into singular parts $Q^{s}_{ji}(f_j,f_i)$ and nonsingular parts $Q^{ns}_{ji}(f_j,f_i)$, and estimate on $Q^{s}_{ji}(f_j,\psi)$. We prove the Cancellation lemma of $Q^{ns}_{ji}(f_j,f_i)$ and $Q^{ns}_{ji}(f_j,f_i)$ is also positive and well-defined. Similar to inelastic model, we prove that $f_i(t,v)$ is strictly positive under the condition of $f_i(t,v) \in C^{\infty}.$ Since $Q^{ns}_{ji}(f_j,f_i)$ is positive and RHS in \eqref{BT_M} includes the general collision operator $Q_{ii}(f_i,f_i)$,
RHS is greater than $Q^{s}_{ii}(f_i,f_i)$ term. In cutoff mixture model, using a similar argument as above, we can retain the gain term of $Q_{ii}(f_i,f_i)$ with time parts in loweround. There is the Gaussian lowerbound for (elastic mono-species) general Boltzmann equation.
It has been proved in \cite{IM2020} for noncutoff collision kernel and in \cite{AB1996} for cutoff collision kernel. Lastly, we can easily get the Gaussian lowerbound in mixture model by applying the paper, \cite{IM2020} and \cite{AB1996}.  \\

The spatially homogeneous Inelastic Boltzmann equation with cutoff collision kernel was already studied by Mischler and Mouhot. In \cite{SC2006} and \cite{SC2009}, they proved that the rescaled solution of the Inelastic Boltzmann equation for hard spheres has lowerbound. Alonso and Orf proved the Cancellation lemma for homogeneous mixture Boltzmann equation when $\theta \in [0, \frac{\pi}{2}]$ in \cite{AO2022} and we extend the range of $\theta$ to $[0,\pi]$ and prove the $Q^{ns}(f_j,f_i)(v)$ is positive and well-defined.  \\

\begin{theorem} (Noncutoff, inelastic, mono-species) \label{1.1}
 Let $f(t,v)$ be the solution of the inelastic Boltzmann equation in \eqref{BT_I}. The collision kernel satisfies noncutoff condition, \eqref{Q_nc_I}, \eqref{B_nc_I}. Assume that $\gamma<0$ and $\gamma+2s \in [0,2]$(moderately soft potentials) and $f \in C^{\infty}$.
Then for any positive time, there are some functions,  $a(t)>0$ and $b(t)>0$ depending on $d,s,M_0$ and $E_0$ in \eqref{hydro_I} such that 
\begin{align} \label{p_I}
    f(t,v) \geq a(t) e^{-b(t)|v|^p}, \quad p =\frac{\log2}{\log\sqrt{1+\beta^2}}, 
\end{align} where $\beta$ is given in \eqref{def beta}\;and\; $2< p < \frac{\log2}{\log\sqrt{5}-\log 2} \simeq  6.213.$ 
\end{theorem}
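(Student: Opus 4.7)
The plan is to transplant the iterated spreading-lemma strategy of Pulvirenti–Wennberg and Imbert–Mouhot–Silvestre \cite{IM2020} to the inelastic setting. The absence of $\sigma \leftrightarrow -\sigma$ symmetry and of time-reversibility makes both the cancellation lemma and the spreading lemma nontrivial to extend, and the proof naturally splits into three technical stages: strict positivity, a quantitative inelastic spreading lemma, and iteration.

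For positivity, I would split $Q(f,f) = Q^{s}(f,f) + Q^{ns}(f,f)$, isolating the grazing singularity of $b(\cos\theta)$. In Carleman-type coordinates adapted to the inelastic map \eqref{v'_I}, the singular part becomes an integral against a non-symmetric kernel $K_{f}(u, u')$, and the self-interacting loss is absorbed by an extension of the Alexandre–Desvillettes–Villani–Wennberg cancellation lemma \cite{AD2000}, whose Jacobian carries explicit powers of $\beta$ but stays bounded. One then checks $Q^{ns}(f,f) \geq 0$ and that it is well defined. The resulting inequality $\partial_{t} f \geq -C(t) f + \mathcal{L} f$ with $\mathcal{L}$ positive, combined with $f \in C^{\infty}$ and the maximum principle recalled in \cite{V2002}, gives $f(t,v) > 0$ throughout $\mathbb{R}_{+} \times \mathbb{R}^{d}$; continuity then extracts a baseline $f(t_{0}, v) \geq \eta_{0} \mathbf{1}_{B_{R_{0}}(v_{0})}$.

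For the spreading step, starting from $f(t_{0}, v) \geq \eta \mathbf{1}_{B_{r}(v_{0})}$ I aim to produce, after a controlled waiting time $\tau(r,\eta)$, an enlargement $f(t_{0} + \tau, v) \geq \eta' \mathbf{1}_{B_{r'}(v_{0}')}$ with $r' = \sqrt{1+\beta^{2}}\, r$. The geometric heart is to identify a ball inside the image of $B_{r}(v_{0}) \times B_{r}(v_{0}) \times \mathbb{S}^{d-1}$ under $(v, v_{*}, \sigma) \mapsto v'$: writing $x = v - v_{0}$ and $y = v_{*} - v_{0}$, \eqref{v'_I} gives
\[
v' - v_{0} \;=\; \frac{(2 - \beta)x + \beta y}{2} \;+\; \frac{\beta}{2}|x - y|\sigma,
\]
and the auxiliary point $Q$ singled out in the preliminaries (defined by $v' - Q \perp v' - v$) encodes the locus of reachable $v'$; the optimal configuration produces the expansion factor $\sqrt{1+\beta^{2}}$, which collapses to the elastic value $\sqrt{2}$ at $\beta = 1$. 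Combined with the integrability of $\tilde{b}$ away from grazing and the pointwise estimate $|v - v_{*}|^{\gamma} \geq (2r)^{\gamma}$ valid on the source ball for $\gamma < 0$, this yields a quantitative recursion for $\eta'$. Iterating gives $r_{n} = (\sqrt{1+\beta^{2}})^{n} r_{0}$ and $\log(1/\eta_{n}) \lesssim 2^{n}$, with the summed waiting time $\sum \tau_{n}$ staying finite in any $(0, t]$ thanks to the moderately-soft condition \eqref{soft_I}; inverting $r_{n} = |v|$ produces $n \sim \log|v|/\log\sqrt{1+\beta^{2}}$ and hence exactly the exponent $p = \log 2 / \log\sqrt{1+\beta^{2}}$, with $p \in (2,\,\log 2/(\log\sqrt{5} - \log 2))$ corresponding to $\beta \in (\tfrac{1}{2}, 1)$.

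The main obstacle is the spreading step. Because the inelastic collision rule has no $\sigma \leftrightarrow -\sigma$ symmetry and is not time-reversible, the symmetrization tricks implicit in \cite{IM2020} do not apply directly: one has to redo the Carleman change of variables for \eqref{v'_I}, keep the $\beta$-dependent Jacobian explicit, and perform a careful geometric optimization (via $Q$ and the decomposition above) to pin down the precise expansion factor $\sqrt{1+\beta^{2}}$ rather than some looser bound. A secondary obstacle is arranging the amplitude recursion in the iteration to have doubling rate exactly $2$; any loss there would replace $\log 2$ in the numerator of $p$ by a larger constant and would worsen the final exponent.
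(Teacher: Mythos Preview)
Your proposal is correct and follows essentially the same route as the paper: the decomposition $Q = Q^{s} + Q^{ns}$ with an inelastic Carleman representation and non-symmetric kernel $K_{f}$, an extension of the cancellation lemma giving $Q^{ns}\ge 0$, strict positivity via the maximum-principle argument from \cite{V2002}, a spreading lemma with expansion factor $\sqrt{1+\beta^{2}}$ obtained from the geometry of the points $P,Q$, and the doubling iteration $l_{n+1}\gtrsim l_{n}^{2}$ yielding $p=\log 2/\log\sqrt{1+\beta^{2}}$. The only point where the paper is more explicit than your sketch is the mechanism of the spreading lemma itself: rather than a direct lower bound on a gain term, it runs a barrier/contradiction argument with $\tilde{l}(t)\phi_{R,\epsilon}(v)$ and uses the $Q^{s}$-on-test-function estimate (their Lemma~3.2) to absorb the error, which is exactly the Imbert--Mouhot--Silvestre device you cite.
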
 

\begin{theorem} (Noncutoff, elastic, multi-species) \label{1.2}
Let $f_i(t,v)$ be the solution of the mixture Boltzmann equation in \eqref{BT_M} and assume that $m_1<m_2< \cdots <m_N$. The collision kernel satisfies noncut off condition, \eqref{Q_nc_M}, \eqref{B_nc_M}. Assume that $\gamma<0$ and $\gamma+2s \in [0,2]$ (moderately soft potentials) and $f_i(t,v) \in C^{\infty}$. Then for any positive time, there are some functions, $a_i(t)>0$ and $b_i(t)>0$ depending on $d,s,M_0$ and $E_0$ in \eqref{hydro_M} such that 
\begin{align} \label{p_M}
    f_i(t,v) \geq a_i(t) e^{-b_i(t)|v|^{2}}. 
\end{align} 
\end{theorem}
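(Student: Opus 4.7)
The plan is to reduce the multi-species system \eqref{BT_M} to a scalar inequality involving only the standard (mono-species) noncutoff Boltzmann operator $Q_{ii}(f_i,f_i)$, and then appeal to the Gaussian lower bound of Imbert-Mouhot-Silvestre \cite{IM2020}. Since the target exponent is the Maxwellian $p=2$, the geometric obstruction that forced $p > 2$ in Theorem \ref{1.1} does not arise; the mono-species collision kinematics available through $Q_{ii}$ is precisely what \cite{IM2020} is tailored to.

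First I would split each cross-species operator as $Q_{ji}(f_j,f_i) = Q^{s}_{ji}(f_j,f_i) + Q^{ns}_{ji}(f_j,f_i)$ using the singular/nonsingular decomposition built earlier in the paper for the mixture kernel \eqref{Q_nc_M}. Invoking the mixture Cancellation lemma (extended to $\theta \in [0,\pi]$) together with the positivity and well-definedness of $Q^{ns}_{ji}$ established earlier, each term with $j \neq i$ takes the form (nonnegative gain) $-\mathcal{L}_{ji}(t,v)\, f_i(t,v)$, where $\mathcal{L}_{ji}$ depends on $f_j$ only through hydrodynamic moments and, thanks to $\gamma < 0$ and \eqref{hydro_M}, is a bounded nonnegative function on $\mathbb{R}^d$. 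The strict positivity of $f_i(t,v)$ obtained via the $C^\infty$ maximum-principle argument from the inelastic case transfers verbatim to the mixture setting, providing the pointwise seed required to launch any spreading iteration. Summing over $j \neq i$ and isolating the diagonal term gives
\begin{align*}
\partial_t f_i(t,v) \;\geq\; Q_{ii}(f_i,f_i)(t,v) \;-\; \mathcal{L}_i(t,v)\, f_i(t,v),
\end{align*}
where $\mathcal{L}_i := \sum_{j \neq i} \mathcal{L}_{ji}$ is bounded in terms of $d, s, M_0, E_0$ and the masses $m_1,\ldots,m_N$.

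Next I would invoke the spreading lemma and iteration scheme of \cite{IM2020}, which for the mono-species homogeneous noncutoff Boltzmann equation with moderately soft potentials and controlled local hydrodynamic quantities produces $f(t,v) \geq a(t) e^{-b(t)|v|^2}$. The perturbation $-\mathcal{L}_i f_i$ is bounded and dissipative, strictly of lower order than the loss part already present inside $Q_{ii}$, so the spreading lemma and the subsequent iteration go through without structural change; the only effect is a modification of the constants $a_i(t), b_i(t)$, which now depend also on $m_1, \ldots, m_N$.

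The main obstacle is the structural claim in the reduction step: one must verify that the cross-species kinematics \eqref{v'_M} is compatible with the Carleman-type representation underpinning the nonsingular decomposition and the Cancellation lemma, and that the cross-loss coefficients $\mathcal{L}_{ji}$ really depend on $f_j$ solely through conserved moments, so that \eqref{hydro_M} translates into time-uniform pointwise control on $\mathcal{L}_i$. Once these mixture-specific structural lemmas are in hand, the remaining analysis is essentially an importation of \cite{IM2020}, which explains why the multi-species case attains the full Maxwellian rate $p = 2$, in contrast with the $p > 2$ exponent of Theorem \ref{1.1}.
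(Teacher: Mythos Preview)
Your reduction step contains a genuine gap that is specific to the noncutoff setting. You claim that for $j\neq i$ the cross term $Q_{ji}(f_j,f_i)$ can be written as ``(nonnegative gain) $-\mathcal{L}_{ji}(t,v)f_i(t,v)$'' with $\mathcal{L}_{ji}$ bounded. The Cancellation lemma indeed gives $Q^{ns}_{ji}(f_j,f_i)=f_i\cdot(\text{positive, finite})$, but the singular part has the integro-differential form $Q^{s}_{ji}(f_j,f_i)(v)=\int K_{f_j}(v,v')(f_i(v')-f_i(v))\,dv'$ with $K_{f_j}(v,v')\sim |v-v'|^{-d-2s}$ near the diagonal. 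If you try to isolate a loss term you get $-f_i(v)\int K_{f_j}(v,v')\,dv'=-\infty\cdot f_i(v)$; the gain and loss only make sense together as a principal value. Hence no pointwise inequality $\partial_t f_i\geq Q_{ii}(f_i,f_i)-\mathcal{L}_i f_i$ with bounded $\mathcal{L}_i$ is available, and your appeal to \cite{IM2020} ``with an extra bounded dissipative perturbation'' does not get off the ground. (This cutoff-style splitting is exactly what works in Theorem~\ref{1.3}, but not here.)

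The paper avoids this by never bounding $Q^{s}_{ji}(f_j,f_i)$ from below pointwise. Instead, the spreading lemma is proved directly at the first-crossing point $(t_0,v_0)$ of $f_i-\tilde l(t)\phi_{R,\epsilon}$: one writes $Q^{s}_{ji}(f_j,f_i)=Q^{s}_{ji}(f_j,f_i-\tilde l\phi)+\tilde l\,Q^{s}_{ji}(f_j,\phi)$. At the minimum point the first piece is $\int K_{f_j}(\cdot,v_0)\big[(f_i-\tilde l\phi)(\cdot)-0\big]\geq 0$ by the maximum principle, for \emph{every} $j$; the second piece is controlled by the $C^2$-based estimate on $Q^{s}_{ji}(f_j,\phi)$ (the mixture analogue of Lemma~\ref{Q_est_I}), which is finite precisely because $\phi$ is smooth with compact support. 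Dropping the nonnegative cross contributions then leaves only $Q^{s}_{ii}(f_i,f_i-\tilde l\phi)$, to which the mono-species spreading computation of \cite{IM2020} applies with growth factor $\sqrt{2}$ and hence Maxwellian exponent $p=2$. The missing ingredient in your outline is therefore not the Cancellation lemma but the $Q^{s}$-estimate against $C^{2}$ test functions for the cross kernels $K_{f_j}$, together with its use inside the barrier argument rather than as a raw pointwise lower bound.
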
 

\begin{remark}
The $p$ of \eqref{p_I} is increasing if $\beta$ is decreasing, so $p$ is greater than $2$. Also, total entropy $\sum_{j=1}^{N} \int f_j(t,v)\log f_j(t,v)\; dv$ in mixture model decreases with time and converges steady-state when all $\{f_i(t,v)\}_{i=1}^{N}$ have the form of Gaussian. So, it is natural they have Gaussian lowerbound in Theorem \ref{1.2}. \\
\end{remark} 

\begin{theorem} (Cutoff, elastic multi-species) \label{1.3}
Let $f_i(t,v)$ be the solution of the mixture Boltzmann equation in \eqref{BT_M} and assume that $m_1<m_2< \cdots <m_N$. The collision kernel satisfies cutoff condition, \eqref{Q_c_M}, \eqref{B_c_M}(hard potentials). Then for any positive time, there are some functions $a_i(t)>0$ and $b_i(t)>0$ depending on $M_0,E_0,H_0$ and $\gamma$ in \eqref{hydro_M} such that \eqref{p_M} holds. Moreover, $a_i(t)$ and $b_i(t)$ can be chosen uniformly for all $t_0<t$, where $t_0$ is any positive time. \\
\end{theorem}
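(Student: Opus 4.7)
The plan is to reduce Theorem \ref{1.3} to the classical Maxwellian lower bound of Pulvirenti--Wennberg \cite{AB1996} for the single-species cutoff hard-potential Boltzmann equation, applied species by species. For each $i$, split the right-hand side of \eqref{BT_M} into gain and loss parts,
\[
\partial_t f_i = Q^+_{ii}(f_i,f_i) - f_i\,\nu_{ii}(v) + \sum_{j \neq i}\bigl(Q^+_{ji}(f_j,f_i) - f_i\,\nu_{ji}(v)\bigr),
\]
where $\nu_{ji}(v) := \int h_{ji}(\theta)|v-v_*|^{\gamma} f_j(v_*)\,dn\,dv_*$. Since each $Q^+_{ji}$ is nonnegative, I would discard the inter-species gains ($j \neq i$) while retaining the single-species gain $Q^+_{ii}$, yielding
\[
\partial_t f_i(t,v) + \bar{\nu}_i(t,v)\, f_i(t,v) \geq Q^+_{ii}(f_i,f_i)(t,v), \qquad \bar{\nu}_i := \sum_{j=1}^{N} \nu_{ji}.
\]
For hard potentials with cutoff, using that each species' mass $\int f_j\,dv$ is conserved (no species change in collisions), that $\int h_{ji}(\theta)\,d\theta < \infty$, and standard propagation of polynomial moments, I obtain the uniform bound $\bar{\nu}_i(t,v) \leq C(1+|v|)^{\gamma}$ for $t \geq t_0$, with $C$ depending only on $M_0, E_0, \gamma$, and the kernels.

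Next, applying Duhamel's formula to the above differential inequality pointwise in $v$ gives, for $0 \leq s \leq t$,
\[
f_i(t,v) \geq e^{-\int_s^t \bar{\nu}_i(\tau,v)\,d\tau}\, f_i(s,v) + \int_s^t e^{-\int_\tau^t \bar{\nu}_i(\tau',v)\,d\tau'}\, Q^+_{ii}(f_i,f_i)(\tau,v)\,d\tau.
\]
This is structurally identical to the Duhamel identity for the single-species cutoff Boltzmann equation, only with a larger (but still polynomially bounded) loss frequency. I would then run the three-step scheme of \cite{AB1996} on $f_i$: (i) from the bounds in \eqref{hydro_M}, at some small $t_1 > 0$ extract a localized lower bound $f_i(t_1,v) \geq c_1 \mathbf{1}_{B(v_0,r_1)}(v)$ on a ball in velocity space; (ii) use the geometry of the elastic gain operator $Q^+_{ii}$ to show that a lower bound on a ball of radius $r$ self-propagates, via one iteration of the Duhamel formula, to a lower bound on a strictly larger ball, with amplitude degraded by the exponential of $\bar{\nu}_i$; (iii) iterate the spreading to cover all of $\mathbb{R}^3$. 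Because $\bar{\nu}_i \lesssim (1+|v|)^{\gamma}$ with $\gamma \leq 1 < 2$, the accumulated exponential losses aggregate into a Gaussian in $|v|^2$, delivering $f_i(t,v) \geq a_i(t)\, e^{-b_i(t)|v|^{2}}$.

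The principal technical obstacle will be step (i): producing the initial localized lower bound for a single species $f_i$ from controls on the \emph{total} mixture entropy. The argument of \cite{AB1996} requires an individual upper bound on $\int f_i \log f_i\,dv$, which is not directly supplied by the H-theorem for mixtures. I would handle this by splitting $\int f_i \log f_i = \int f_i (\log f_i)_+ - \int f_i (\log f_i)_-$ and noting that $\int f_i(\log f_i)_-\,dv \leq C(M_0, E_0)$ follows from finite mass and energy via the standard Maxwellian comparison, so the bound on $H_0$ in \eqref{hydro_M} yields an individual upper bound on $\int f_i (\log f_i)_+\,dv$. This suffices to localize $f_i$ to a set of positive measure on which a quantitative lower bound holds, enabling the Pulvirenti--Wennberg ball extraction. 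Finally, the uniformity for $t \geq t_0$ follows because every constant in the scheme depends only on $M_0, E_0, H_0, \gamma$, and the kernels $h_{ji}$, and the whole construction may be restarted from any fixed $t_0 > 0$.
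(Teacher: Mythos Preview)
Your proposal is correct and matches the paper's proof essentially step for step: the paper also drops the cross-species gains to obtain $f_i(t,v) \geq \int_0^t G_\tau^t(v)\,Q_{ii}^+(f_i,f_i)(v)\,d\tau$ with $G_\tau^t(v)=\exp\bigl(-\sum_j\int_\tau^t L(f_j)\bigr)$, bounds the total loss frequency by $C(1+|v|^\gamma)$ from conserved mass and energy, controls $\int f_j(\log f_j)_+$ species by species via the mixture H-theorem and the standard mass--energy bound on $\int f_j(\log f_j)_-$, and then invokes Lemmas~3.1--3.2 of \cite{AB1996} verbatim. The only cosmetic difference is that the paper also records a lower bound on $L(f_j)$ (via \cite{L1983}), which is not actually needed for the spreading argument you outline.
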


\begin{remark}
The $\{f_i(t,v)\}_{i=1}^{N}$ which each $f_i(t,v)$ is the solution in \eqref{BT_M} have the uniformly Gaussian lowerbound for all $t_0<t$, where $t_0$ is any positive time.  \\
\end{remark}  

\section{Preliminaries}

\noindent($\mathbf{I}$) The Inelastic Boltzmann equation model : The $v'$ and $v'_*$ can be expressed as the formula
\begin{align} \label{v'_I'}
\begin{split}
     v'&=v-\beta\langle v-v_*,n \rangle {n}
    =\frac{v+v_*}{2}+\frac{1-\beta}{2}(v-v_*)+\frac{\beta}{2}\;|v-v_*|{\sigma}, \\
     v_*'&=v_*+\beta\langle v-v_*,n \rangle {n}=\frac{v+v_*}{2}-\frac{1-\beta}{2}(v-v_*)-\frac{\beta}{2}\;|v-v_*|{\sigma}
\end{split}
\end{align}
for the angular parameter ${n} \in \mathbb{S}^{d-1}_{+}, \; {\sigma} \in \mathbb{S}^{d-1}.$ The geometry of the inelastic collision defined by \eqref{v'_I'} is shown in Figure \ref{pre_I_F}. 
Let the point $O$ be $\frac{v+v_*}{2}$. Also, let the point $A$ and $B$ be $\frac{v+v_*}{2}+\frac{1-\beta}{2}(v-v_*)$ and $\frac{v+v_*}{2}-\frac{1-\beta}{2}(v-v_*)$, respectively. 
\begin{figure}[t]
\centering
\includegraphics[width=6cm]{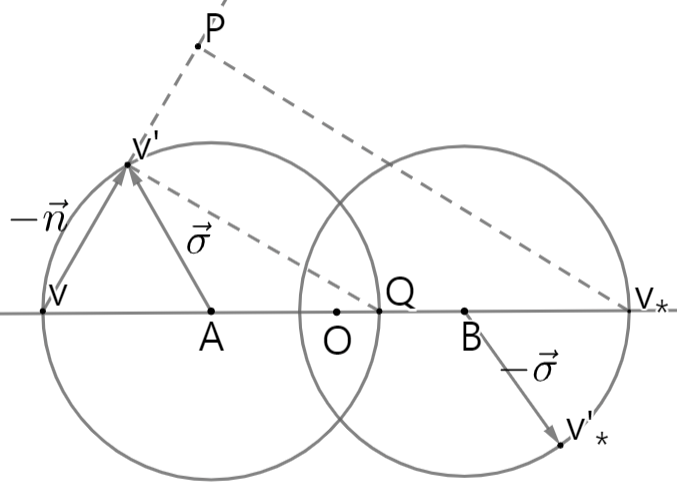} 
\caption{Given $v$ and $v_*$, the possible locations of $v'$, $v_*'$ when $\frac{1}{2}<\beta < \frac{2}{3}.$ \label{pre_I_F}} 
\end{figure}
The point $P,$ which is located on the extension line of $v$ and $v'$ in ratio $|P-v|:|P-v'|=1:1-\beta$, is as follows :
\begin{align} 
    P &= \frac{1}{\beta}v'-(\frac{1}{\beta}-1)v, \quad P-v\;\bot\;P-v_*.  \label{P_I}  
\end{align}
The point $Q,$ which internally divides the line segment joining the points $v$ and $v'$ in the ratio $|Q-v|:|Q-v_*|=\beta:1-\beta$, is as follows : 
\begin{align}
    Q &= (1-\beta)v+\beta v_*, \quad v'-Q\;\bot\; v'-v. \label{Q_I}
\end{align}
When  $v$ and $v'$ are fixed, we define the plane $E_{Pv'}$ with a normal vector $v'-v$ that contains the point $P$ as 
\begin{align} \label{E_pv'_I}
    E_{Pv'}\doteq \{x \in \mathbb{R}^d|\; (x-P)\;\bot \;v-v' , \quad P=\frac{1}{\beta}v'-(\frac{1}{\beta}-1)v\;\}.
\end{align} 
We express $\cos \frac{\theta}{2}$ and $\sin \frac{\theta}{2}$ as 
\begin{align} 
    \cos \frac{\theta}{2} &= \frac{|Q-v'|}{|Q-v|} =
    \frac{|(1-\beta)v+\beta v_*-v'|}{\beta|v-v_*|}, \label{cos_I}  \\
    \sin \frac{\theta}{2} &= \frac{|v-v'|}{|Q-v|}=
    \frac{|v-v'|}{\beta|v-v_*|}, \label{sin_I}
\end{align} where $\langle \frac{v-v_*}{|v-v_*|}, \sigma \rangle = \cos\theta$.
From \eqref{v'_I'}, 
\begin{align} \label{sigma_I}
     &{\sigma}=\frac{v'-(\frac{v+v_*}{2}+\frac{1-\beta}{2}(v-v_*))}{\frac{\beta|v-v_*|}{2}}=\frac{v'-v+\frac{\beta}{2}(v-v_*)}{\frac{\beta|v-v_*|}{2}}.
\end{align} Therefore, 
\begin{align} \label{delta_I}
     &\delta(|\sigma|^2-1)
    = \delta(\frac{(v-v')\cdot(\beta v_*+(1-\beta)v-v')}{\frac{\beta^2|v-v_*|^2}{4}})
    =\frac{\beta|v-v_*|^2}{4|v-v'|}\delta(\frac{v-v'}{|v-v'|}\cdot (v_*+(\frac{1}{\beta}-1)v-\frac{1}{\beta}v')).
\end{align} \vspace{7mm}

\noindent ($\mathbf{M}$) The Mixture Boltzmann equation model : The $v',v_*'$ can be expressed as the formula
\begin{align} \label{v'_M'}
\begin{split}
     v' =v-\frac{2m_j}{m_i+m_j}\langle v-v_*, n \rangle {n}= \frac{m_i}{m_i+m_j}v+\frac{m_j}{m_i+m_j}v_*+\frac{m_j}{m_i+m_j}|v-v_*|
    \sigma,\\
    v_*' =v_*+\frac{2m_i}{m_i+m_j}\langle v-v_*, n \rangle {n}=\frac{m_i}{m_i+m_j}v+\frac{m_j}{m_i+m_j}v_*-\frac{m_i}{m_i+m_j}|v-v_*|\sigma
\end{split}
\end{align}
for the angular parameter ${n} \in \mathbb{S}^{d-1}_{+}, \; {\sigma} \in \mathbb{S}^{d-1}.$ The geometry of the mixture collision defined by \eqref{v'_M'} is shown in Figure \ref{Pre_M_F_1} and Figure \ref{Pre_M_F_2}. Let the point $O$ be $\frac{m_i}{m_i+m_j}v+\frac{m_j}{m_i+m_j}v_*$.\vspace{3mm}

\noindent First, in the case $m_i<m_j$, the geometry of the mixture collision defined by \eqref{v'_M'} is shown in Figure \ref{Pre_M_F_1} and the radius of $v'$ is greater than the radius of  $v_*'$. The point $P,$ which internally divides the line segment joining the points $v$ and $v'$ in the ratio $|P-v| : |P-v'|=m_j-m_i : m_i+m_j$, is as follows :
\begin{equation} \label{Pre_P_M_1}
    P=\frac{m_i+m_j}{2m_j}v+\frac{m_j-m_i}{2m_j}v',
    \quad P-v'\;\bot\;P-v_*'.
\end{equation}
The point $Q,$ which is located on the extension line of $v'$ and $v_*$ in ratio $|Q-v'| : |Q-v_*'|=2m_j : m_j-m_i$, is as follows :
\begin{equation} \label{Pre_Q_M_1}
    Q=\frac{2m_j}{m_i+m_j}v_*'-\frac{m_j-m_i}{m_i+m_j}v', \quad v-Q\;\bot\;v-v'.
\end{equation}
\begin{figure}[t]
\centering
\includegraphics[width=4cm]{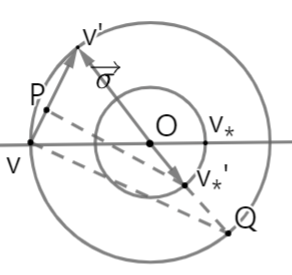} 
\caption{Given $v$ and $v_*$, the possible locations of $v'$, $v_*'$ \label{Pre_M_F_1}}
\end{figure} 
When $v'$ and $v$ are fixed, we define the plane $E_{Pv'}$ with normal vector $v'-v$ that contains the point $P$ as 
\begin{align} \label{E_pv'_M_1}
    E_{Pv'}\doteq \{x \in \mathbb{R}^d|\; (x-P)\;\bot \;v-v' , \quad P=\frac{m_i+m_j}{2m_j}v+\frac{m_j-m_i}{2m_j}v'\;\}.
\end{align} 
We express $\cos \frac{\theta}{2}$ and $\sin \frac{\theta}{2}$ as 
\begin{align} 
    \cos \frac{\theta}{2} &= \frac{|v-Q|}{|v'-Q|}=
    \frac{|v-\frac{2m_j}{m_i+m_j}v_*'+\frac{m_j-m_i}{m_i+m_j}v'|}{\frac{2m_j}{m_i+m_j}|v-v_*|}, \label{cos_M_1} \\
    \sin \frac{\theta}{2} &= \frac{|v'-v|}{|v'-Q|}=
    \frac{|v'-v|}{\frac{2m_j}{m_i+m_j}|v-v_*|}, \label{sin_M_1}
\end{align} where $\langle \frac{v-v_*}{|v-v_*|}, \sigma \rangle = \cos\theta.$
From \eqref{coll_M} and \eqref{v'_M'},
\begin{align} \label{sigma_M_1}
      &{\sigma}=\frac{v'-(\frac{2m_j}{m_i+m_j}v_*'-\frac{m_j-m_i}{m_i+m_j}v')}{\frac{2m_j}{m_i+m_j}|v-v_*|}=\frac{v'-v-(\frac{2m_j}{m_i+m_j}v_*'-\frac{m_j-m_i}{m_i+m_j}v'-v)}{\frac{2m_j}{m_i+m_j}|v-v_*|}.
\end{align}
Therefore,
\begin{align} \label{delta_M_1}
\begin{split}
     \delta(|\sigma|^2-1)
    &= \delta(\frac{2(v-v')\cdot(\frac{2m_j}{m_i+m_j}v_*'-\frac{m_j-m_i}{m_i+m_j}v'-v)}{(\frac{2m_j}{m_i+m_j})^2|v-v_*|^2}) \\ 
    &= \frac{m_j}{m_i+m_j}\frac{|v-v_*|^2}{|v-v'|}\delta(\frac{v-v'}{|v-v'|}\cdot (v_*'-\frac{m_j-m_i}{2m_j}v'-\frac{m_i+m_j}{2m_j}v)).
\end{split}
\end{align} 
\vspace{3mm}

\noindent Second, in the case $m_i>m_j$, the geometry of the mixture collision defined by \eqref{v'_M'} is shown in Figure \ref{Pre_M_F_2} and the radius of $v'$ is smaller than the radius of  ${v_*}'$. The point $R,$ which is located on the extension line of $v'$ and $v$ in the ratio $|R-v'| : |R-v|=m_i-m_j : m_i+m_j$, is as follows :
\begin{equation} \label{Pre_R_M_2}
    R=\frac{m_i+m_j}{2m_j}v'-\frac{m_i-m_j}{2m_j}v, \quad R-v\;\bot\;R-v_*.
\end{equation}
The point $S,$ which internally divides the line segment joining the points $v$ and $v'$ in the ratio $|S-v| : |S-v_*|=2m_j : m_i-m_j$, is as follows :
\begin{equation} \label{Pre_S_M_2}
    S=\frac{m_i-m_j}{m_i+m_j}v+\frac{2m_j}{m_i+m_j}v_*,
    \quad S-v'\;\bot\;v-v'.
\end{equation}
\begin{figure} [t]
\includegraphics[width=4cm]{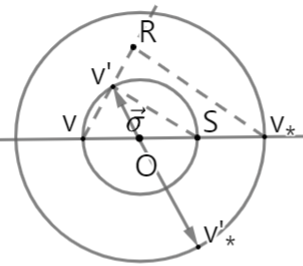} 
\caption{Given $v$ and $v_*$, the possible locations of $v'$, $v_*'$\label{Pre_M_F_2}} 
\end{figure}

\noindent When $v'$ and $v$ are fixed, we define the place $E_{Rv'}$ with normal vector $v'-v$ that contains the point $R$ as 
\begin{align} \label{E_Rv'_M_2}
    E_{Rv'}\doteq \{x \in \mathbb{R}^d|\; (x-R)\;\bot \;v-v' , \quad R=\frac{m_i+m_j}{2m_j}v'-\frac{m_i-m_j}{2m_j}v\;\}.
\end{align} 
We express $\cos \frac{\theta}{2}$ and $\sin \frac{\theta}{2}$ as 
\begin{align} 
    \cos \frac{\theta}{2} &= \frac{|S-v'|}{|S-v|} =
    \frac{|(1-\frac{2m_j}{m_i+m_j})v+\frac{2m_j}{m_i+m_j} v_*-v'|}{\frac{2m_j}{m_i+m_j}|v-v_*|},  \label{cos_M'}  \\
    \sin \frac{\theta}{2} &= \frac{|v-v'|}{|S-v|}=
    \frac{|v-v'|}{\frac{2m_j}{m_i+m_j}|v-v_*|}, \label{sin_M'}
\end{align} where $\langle \frac{v-v_*}{|v-v_*|}, \sigma \rangle = \cos\theta$.
From \eqref{v'_M'}, 
\begin{align} \label{sigma_M'}
     &{\sigma}=\frac{v'-(\frac{m_i}{m_i+m_j}v+\frac{m_j}{m_i+m_j}v_*)}{\frac{m_j}{m_i+m_j}|v-v_*|}=\frac{v'-v+\frac{m_j}{m_i+m_j}(v-v_*)}{\frac{m_j}{m_i+m_j}|v-v_*|}.
\end{align}
Therefore,
\begin{align} \label{delta_M'}
\begin{split}
      \delta(|\sigma|^2-1)
    &= \delta(\frac{(v-v')\cdot(\frac{2m_j}{m_i+m_j} v_*+(1-\frac{2m_j}{m_i+m_j})v-v')}{(\frac{m_j}{m_i+m_j})^2|v-v_*|^2}) \\
    &=\frac{m_j}{2(m_i+m_j)}\frac{|v-v_*|^2}{|v-v'|}\delta(\frac{v-v'}{|v-v'|}\cdot (v_*+\frac{m_i-m_j}{2m_j}v-\frac{m_i+m_j}{2m_j}v')).
\end{split}
\end{align} \\

\section{The Inelastic Boltzmann equation}
\subsection{Noncutoff collision kernel}
Taking $\phi(v)$ to be a suitably regular test function, the weak form of the Inelastic Boltzmann collision operator $Q(f,f)(t,v)$ is written by 
\begin{align} \label{Q_nc_I'}
\begin{split}
        \int Q(f,g)(v) \phi(v) \; dv &= \int B(|v-v_*|, \cos \theta)f(v_*)g(v)(\phi(v')-\phi(v))\: d\sigma dv_*dv,\\
     \quad &\text{where}\;\;\cos\theta = \langle\frac{v-v_*}{|v-v_*|}, \sigma\rangle, \quad \theta \in [0, \pi]
\end{split}
\end{align}
and $v'$ is given by in \eqref{v'_I}. We define the $B(|v-v_*|, \cos \theta)$ as   
\begin{equation} \label{B_nc_I'}
    B(|v-v_*|,\cos \theta)=|v-v_*|^{\gamma}b(\cos\theta), \quad b(\cos\theta) \approx_{\theta \sim 0} |\theta|^{-(d-1)-2s} \tilde{b}(\cos\theta), \quad
    \int  \tilde{b}(\cos\theta) \;d\sigma < +\infty.
\end{equation}
The $\tilde{b}(\cos\theta)$ is smooth on $[0,\pi]$ and positive on $[0,\pi).$ Because taking minus on $\sigma$ does not reverse $v'$ and ${v_*'}$($v' \nleftrightarrow v_*'$ when $\sigma \leftrightarrow -\sigma$), we can not reduce the range of $\theta$, $[0, \pi]$ to $[0, \frac{\pi}{2}].$ By loss of energy, the collision is not time-reversible, i.e., $B(|v-v_*|,\cos\theta) \neq B(|v'-v_*'|,\cos\theta).$  \vspace{3mm}

\noindent We split \eqref{Q_nc_I'} into two parts,
\begin{align} \label{split_nc_I}
\begin{split}
     \int Q(f,g)(u)\phi(u) \;du 
    &= \int B(|u-v_*|, \cos\theta)g(u)f(v_*) (\phi(u')-\phi(u)) \;d\sigma dv_* du \\
     &= \int B(|u-v_*|, \cos\theta)f(v_*)(\phi(u')g(u)-\phi(u')g(u'))\; d\sigma dv_*du \\
      &\quad + \int B(|u-v_*|, \cos\theta)f(v_*)(\phi(u')g(u')-\phi(u)g(u))\; d\sigma dv_* du. 
\end{split}
\end{align}
After taking $\phi(u)=\delta(u-v)$, we define the singular part $Q^{s}(f,g)(v)$ and the nonsingular part $ Q^{ns}(f,g)(v)$ as
\begin{align}  
    Q^{s}(f,g)(v) &= \int B(|u-v_*|, \cos\theta)f(v_*)\delta(u'-v)(g(u)-g(u'))\; d\sigma dv_*du, \label{Qs_nc_I} \\
    Q^{ns}(f,g)(v) &= \int B(|u-v_*|, \cos\theta)f(v_*)(\delta(u'-v)g(u')-\delta(u-v)g(u))\; d\sigma dv_* du \label{Qns_nc_I}
\end{align}
and rewrite $Q(f,g)(v)$ as
\begin{align} \label{Qs_Qns_nc_I}
    Q(f,g)(v) = Q^{s}(f,g)(v) + Q^{ns}(f,g)(v).
\end{align} \vspace{3mm}

\noindent First, we change the $Q^{s}(f,g)(v)$ into the Carleman alternative representation form.
We rewrite $u$ instead of $v$ and $u'$ instead of $v'$ in \eqref{v'_I'}, then 
\begin{align} \label{u'_I}
\begin{split}
    u'=u-\beta\langle u-v_*,n \rangle {n}
    =\frac{u+v_*}{2}+\frac{1-\beta}{2}(u-v_*)+\frac{\beta}{2}\;|u-v_*|{\sigma}, \\
     v_*'=v_*+\beta\langle u-v_*,n \rangle {n}=\frac{u+v_*}{2}-\frac{1-\beta}{2}(u-v_*)-\frac{\beta}{2}\;|u-v_*|{\sigma},
\end{split}
\end{align} where $\cos \theta = \langle \frac{u-v_*}{|u-v_*|}, \sigma \rangle.$ By changing of variables $\sigma \rightarrow u'$ with Jacobian determinant $|\frac{d u'}{d\sigma}|= (\frac{\beta|u-v_*|}{2})^{d}$ and replacing $\delta(|\sigma|^2-1)$ by \eqref{delta_I}, we have that
\begin{align} 
    &\quad\int B(|u-v_*|,\cos\theta)f(v_*)\delta(u'-v)(g(u)-g(u')) \; d\sigma du dv_* \notag \\
     &=\int\int (\frac{\beta|u-v_*|}{2})^{-d}|u-v_*|^{\gamma}b(\cos\theta)f(v_*)\delta(u'-v)(g(u)-g(u'))\delta(|\sigma|^2-1) \; du'dudv_* \notag \\
    &=2^{d-2}\beta^{-d+1}\int |u'-u|^{-1}\delta(u'-v)(g(u)-g(u'))\int_{v_*\in E_{Pu'}} |u-v_*|^{-d+\gamma+2}b(\cos\theta)f(v_*)\; dv_*dudu', \label{delta_sigma_nc_I}
\end{align} where  $E_{pu'}=\{x \in \mathbb{R}^d|\; (x-P)\;\bot \;u-u' , \quad P=\frac{1}{\beta}u'-(\frac{1}{\beta}-1)u \;\}.$ 
In elastic model, in \cite{IM2020}, in Section 2.1, they define 
$\tilde{b}(\cos\theta)$ as
\begin{align*}
2^{d-1}b(\cos\theta)&=|v-v'|^{-(d-1)-2s}|v-v_*|^{d-2-\gamma}|v-v_*'|^{\gamma+2s+1}\tilde{b}(\cos\theta) \notag \\
&=(\frac{|v-v'|}{|v-v_*|})^{-(d-1)-2s}(\frac{|v-v_*'|}{|v-v_*|})^{\gamma+2s+1}\tilde{b}(\cos\theta) \notag \\
&=(\sin\frac{\theta}{2})^{-(d-1)-2s}(\cos\frac{\theta}{2})^{\gamma+2s+1}\tilde{b}(\cos\theta)
\end{align*} under the assumption that $b(\cos\theta) \approx_{\theta \sim 0} |\theta|^{-(d-1)-2s}.$ Here, we use $\sin\frac{\theta}{2}=\frac{|v-v'|}{|v-v_*|}$, 
$\cos\frac{\theta}{2}=\frac{|v-v_*'|}{|v-v_*|}$ when
$\cos\theta = \langle\frac{v-v_*}{|v-v_*|},\sigma\rangle$. Similarly, we define $\tilde{b}(\cos\theta)$ as
\begin{align} \label{b_nc_I}
    2^{d-2}b(\cos\theta)&= (\sin\frac{\theta}{2})^{-(d-1)-2s}(\cos\frac{\theta}{2})^{\gamma+2s+1}\tilde{b}(\cos\theta) \notag \\ &= (\frac{|u'-u|}{\beta|u-v_*|})^{-(d-1)-2s}(\frac{|u'-\beta v_*-(1-\beta)u|}{\beta|u-v_*|})^{\gamma+2s+1} \tilde{b}(\cos\theta)
\end{align} by using $\sin \frac{\theta}{2}$,  $\cos \frac{\theta}{2}$ in \eqref{cos_I}, \eqref{sin_I}. We
replace $2^{d-2}b(\cos\theta)$ in \eqref{delta_sigma_nc_I}, then 

\begin{align}
&\quad \eqref{delta_sigma_nc_I} \notag \\
    &=\beta^{2s}\int  \frac{1}{|u'-u|^{d+2s}}\delta(u'-v)(g(u)-g(u'))\int_{v_*\in E_{Pu'}} \tilde{b} (\cos\theta) |v_*+(\frac{1}{\beta}-1)u-\frac{1}{\beta}u'|^{\gamma+2s+1}f(v_*)\; dv_*dudu' \notag \\
    &=\beta^{2s}\int \frac{1}{|v-u|^{d+2s}}(g(u)-g(v))\int_{v_*\in E_{Pv}} \tilde{b} (\cos\theta) |v_*+(\frac{1}{\beta}-1)u-\frac{1}{\beta}v|^{\gamma+2s+1}f(v_*)\; dv_*du,  \notag
\end{align}
where $E_{pv}=\{x \in \mathbb{R}^d|\; (x-P)\;\bot \;u-v , \quad P=\frac{1}{\beta}v-(\frac{1}{\beta}-1)u\;\}$. 
We can deduce easily, 
\begin{align} \label{Qs_Kf_nc_I}
Q^{s}(f,g)(v) 
\doteq p.v\int K_f(u,u')\delta(u'-v)(g(u)-g(u')) \;dudu'
= p.v\int K_f(u,v)(g(u)-g(v)) \;du,
\end{align}
where 
\begin{align} \label{Kf_nc_I}
\begin{split}
    K_f(u,u') &= \frac{\beta^{2s}}{|u'-u|^{d+2s}}\int_{v_*\in E_{Pu'}} \tilde{b} (\cos\theta) |v_*+(\frac{1}{\beta}-1)u-\frac{1}{\beta}u'|^{\gamma+2s+1}f(v_*)\; dv_*\\
&=\frac{\beta^{2s}}{|u'-u|^{d+2s}}\int_{v_*\in E_{Pu'}} \tilde{b} (\cos\theta) |v_*-P|^{\gamma+2s+1}f(v_*)\; dv_*. 
\end{split}
\end{align}
The notation p.v in \eqref{Qs_Kf_nc_I} is the Cauchy principal value around the point $u'$ when $s\in[\frac{1}{2},1).$ 
The $K_f(u,u')$ is not symmetric of $K_f(u'+w,u')\neq K_f(u'-w,u')$, thus we define $\overline{K}_{f}(u,u')$ as
\begin{align} \label{K'_f_nc_I}
    \overline{K}_{f}(u,u') = \frac{\beta^{2s}}{|u'-u|^{d+2s}}\int_{v_*\in E_{Pu'}} \tilde{b} (\cos\theta) {|v_*-u'|}^{\gamma+2s+1}f(v_*)\; dv_*,
\end{align} and the above $\overline{K}_{f}(u,u')$ has the property
$\overline{K}_f(u'+w,u')= \overline{K}_f(u'-w,u')$.
The following inequality holds
\begin{align} \label{K_I_inequal}
    K_f(u,u') \leq \overline{K}_{f}(u,u'),
\end{align}
since $P$ is the closest point to $v_*$ in extension line of $u$ and $u'$.(See Figure \ref{pre_I_F}.) \vspace{3mm}

\subsubsection{Estimate on the collision operator for inelastic model}
We estimate kernel $K_f$ in \eqref{Kf_nc_I} and the collision operator $Q^{s}(f,\phi)(v)$ in inelastic model for test function $\phi(v)\in C^{2}.$ To apply symmetric property, we use $\overline{K}_f(u,u')$ instead of $K_f(u,u')$. In Lemma \ref{Kf_est_I}, we assume that  moderately soft potentials conditions, $\gamma<0$ and $\gamma+2s \in [0,2]$. In elastic model, the estimates on $Q^{s}(f,\phi)(v)$ is in Lemma 2.3, \cite{IM2020}. \\
 
\begin{lemma} ($K_f$ estimate for inelastic model) \label{Kf_est_I}
Assume that $\gamma<0$ and $\gamma+2s \in [0,2]$ (moderately soft potentials) and $M_0, E_0 < +\infty$ in \eqref{hydro_I}.
The following estimates hold. \\
(i) For any $0<r \leq 1$,
\begin{align} 
    \int_{B_{r}(u')}|u-u'|^2K_f(u,u')\; du &\leq \int_{B_{r}(u')}|u-u'|^2\overline{K}_{f}(u,u')\; du \lesssim (1+|u'|)^{\gamma+2s}r^{2-2s}, \label{Kf_est_1_I} \\
     \int_{\mathbb{R}^d/B_{r}(u')}K_f(u,u')\; du &\lesssim (1+|u'|)^{\gamma+2s}r^{-2s}. \label{Kf_est_2_I}
\end{align}
(i) For any $1<r<+\infty$,
\begin{align}  
    \int_{B_{r}(u')}|u-u'|^2K_f(u,u')\; du &\leq \int_{B_{r}(u')}|u-u'|^2\overline{K}_{f}(u,u')\; du \lesssim (1+|u'|)^{\gamma+2s}r^{\gamma+3}, \label{Kf_est_3_I} \\
     \int_{\mathbb{R}^d/B_{r}(u')}K_f(u,u')\; du &\lesssim (1+|u'|)^{\gamma+2s}r^{\gamma}. \label{Kf_est_4_I}
\end{align}
Here, $B_r(u') =\{x\in \mathbb{R}^d | \;|u'-x|<r \}$ and $K_f(u,u')$, $\overline{K}_f(u,u')$ are given in \eqref{Kf_nc_I}, \eqref{K'_f_nc_I}, respectively.
\end{lemma}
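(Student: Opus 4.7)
The plan is to adapt the strategy of Lemma 2.3 in \cite{IM2020} to the inelastic geometry, in which the Carleman hyperplane $E_{Pu'}$ passes through $P=\frac{1}{\beta}u'-(\frac{1}{\beta}-1)u$ rather than through $u'$. By the pointwise inequality $K_f\leq\overline{K}_f$ from \eqref{K_I_inequal}, it suffices to bound each of the four integrals with $\overline{K}_f$ in place of $K_f$. The core step is a change of variables interchanging the roles of $u$ and $v_*$: fix $u'$, write $w=u-u'=\rho\omega$ with $\rho>0$, $\omega\in S^{d-1}$, and parametrize $v_*\in E_{Pu'}$ as $v_*=u'-\alpha(\rho)\omega+z$ with $z\perp\omega$ and $\alpha(\rho)=\frac{1-\beta}{\beta}\rho$; setting $y:=v_*-u'=z-\alpha\omega$, one has $|y|^2=\alpha^2+|z|^2$ and hence $|v_*-u'|^{\gamma+2s+1}=|y|^{\gamma+2s+1}$.

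Next I would apply the coarea-type identity
\begin{equation*}
\int_{S^{d-1}}d\omega\int_{z\perp\omega}G(z-\alpha\omega)\,dz \;=\; \int_{\mathbb{R}^d}G(y)\Bigl(\int_{S^{d-1}}\delta(\omega\cdot y+\alpha)\,d\omega\Bigr)\,dy,
\end{equation*}
where the inner spherical integral evaluates, by polar coordinates around $\hat{y}$, to $|S^{d-2}|(|y|^2-\alpha^2)^{(d-3)/2}/|y|^{d-2}$, which is bounded by $|S^{d-2}|/|y|$ for $d\geq 3$. Using the boundedness of $\tilde{b}$ on $[0,\pi]$, the hyperplane integral in $\overline{K}_f$ collapses to $\lesssim \int f(v_*)|v_*-u'|^{\gamma+2s}\,dv_*$. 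Splitting $|v_*-u'|^{\gamma+2s}\lesssim(1+|v_*|)^{\gamma+2s}+(1+|u'|)^{\gamma+2s}$ and interpolating the first integral between $M_0$ and $E_0$ via H\"older (possible since $\gamma+2s\in[0,2]$) yields $\int f(v_*)|v_*-u'|^{\gamma+2s}\,dv_*\lesssim(1+|u'|)^{\gamma+2s}$.

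What remains is the $\rho$-integration: for $r\leq 1$, $\int_0^r \rho^{1-2s}\,d\rho\lesssim r^{2-2s}$ gives \eqref{Kf_est_1_I}, and $\int_r^\infty \rho^{-1-2s}\,d\rho\lesssim r^{-2s}$ gives \eqref{Kf_est_2_I}; for $r>1$, the identical calculation produces $r^{2-2s}$ and $r^{-2s}$, which are bounded by $r^{\gamma+3}$ and $r^\gamma$ respectively because $\gamma+2s\geq 0$, yielding \eqref{Kf_est_3_I} and \eqref{Kf_est_4_I}. The main obstacle is the careful verification of the coarea identity in the inelastic setting: in the elastic limit $\beta=1$ of \cite{IM2020} one has $\alpha\equiv 0$ and every hyperplane passes through $u'$, so the identity reduces to the standard projection formula with weight $|S^{d-2}|/|y|$; for $\beta<1$ the $\rho$-dependent shift $\alpha(\rho)$ moves the hyperplane off $u'$ and restricts $y$ to $\{|y|\geq\alpha(\rho)\}$, and one must verify that the extra factor $(|y|^2-\alpha(\rho)^2)^{(d-3)/2}/|y|^{d-3}$ remains controlled by $1$ uniformly in $\rho$, with additional care required when $d=2$ where this factor is locally singular at $|y|=\alpha$ but still integrable.
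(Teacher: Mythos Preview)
Your approach is essentially the paper's, organized differently: both reduce the hyperplane integral to an integral over $\mathbb{R}^d$ via a coarea/polar change of variables, then bound $\int f(v_*)|v_*-u'|^{\gamma+2s}\,dv_*$ using $M_0,E_0$ and $\gamma+2s\in[0,2]$. The main difference is that the paper first applies the triangle inequality $|v_*-u'|^{\gamma+2s+1}\lesssim|v_*-P|^{\gamma+2s+1}+|u-u'|^{\gamma+2s+1}$ \emph{before} the coarea step, so that the radial weight on the hyperplane is $|v_*-P|^{\gamma+2s+1}$; in your variables this produces the combined factor $(|y|^2-\alpha^2)^{(\gamma+2s+d-2)/2}/|y|^{d-2}\leq|y|^{\gamma+2s}$, which is uniformly bounded for all $d\geq 2$ because $\gamma+2s\geq 0$. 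The paper then undoes the shift with a second triangle inequality, generating extra powers $r^{\gamma+2},r^{\gamma+3},r^\gamma$ that are absorbed into the stated bounds.

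Your direct route is cleaner for $d\geq 3$ but leaves a genuine gap in $d=2$: the factor $(|y|^2-\alpha^2)^{-1/2}$ is not controlled by $1/|y|$, and ``still integrable'' is not enough here --- you need a bound depending only on $M_0,E_0$ and uniform in $\alpha=\frac{1-\beta}{\beta}\rho$, which fails if $f$ concentrates near the sphere $\{|y|=\alpha\}$ (a mass-$M_0$ annulus of width $\epsilon$ there gives a contribution $\sim M_0\,\alpha^{\gamma+2s+1/2}\epsilon^{-1/2}$). Since the moderately soft condition \eqref{soft_I} explicitly allows $d=2$, this matters. The simplest fix is exactly the paper's: note $|v_*-P|=\sqrt{|y|^2-\alpha^2}$, replace $|v_*-u'|^{\gamma+2s+1}$ by $|v_*-P|^{\gamma+2s+1}+C|u-u'|^{\gamma+2s+1}$ via \eqref{tri_inequ_Kf_est_I}, and then the singular spherical factor is absorbed by the weight, the combined exponent $\gamma+2s+d-2\geq 0$ doing the work.
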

\begin{proof}
We often use the triangle inequalities that 
\begin{align} 
    |v_*-u'| &\leq |v_*-P|+(\frac{1}{\beta}-1)|u-u'|, \label{tri_inequ_Kf_est_I} \\
    |v_*-P| &\leq |v_*-u'|+(\frac{1}{\beta}-1)|u-u'|, \label{tri_inequ_Kf_est_I'} 
\end{align}
where $P=\frac{1}{\beta}u'-(\frac{1}{\beta}-1)u$.
First, we prove the inequalities \eqref{Kf_est_1_I} and \eqref{Kf_est_3_I}.
Using the inequalities \eqref{K_I_inequal} and \eqref{tri_inequ_Kf_est_I}, we have that
\begin{align}
&\quad\int_{B_{r}(u')}|u-u'|^2K_f(u,u')\; du \leq \int_{B_{r}(u')}|u-u'|^2\overline{K}_{f}(u,u')\; du \notag \\
&\lesssim
     \int_{B_{r}(u')}|u-u'|^{2-d-2s} \int_{v_*\in E_{Pu'}}|v_*-u'|^{\gamma+2s+1}f(v_*) \; dv_*du \notag \\
     &\lesssim
     \int_{B_{r}(u')}|u-u'|^{2-d-2s} \int_{v_*\in E_{Pu'}}(|v_*-P|^{\gamma+2s+1}+|u-u'|^{\gamma+2s+1})f(v_*) \; dv_*du. \label{Kf_est_I_tri_1}
\end{align}
Let $u-u'=l\vec{a}$ \;where $\vec{a} \in \mathbb{S}^{d-1}(u')$ and $v_*-P= k\vec{b}$ \;where $\vec{b} \in \mathbb{S}^{d-1}(P).$ Since $u-u'\;\bot\;v_*-P$, we add $d-2$ to the power of $k$ in \eqref{Kf_est_I_tri_1}. Next, we extend the region of $v_*$ to $\mathbb{R}^d$ and apply the triangle inequality \eqref{tri_inequ_Kf_est_I'} to  \eqref{extend_Kf_est_I}. Then we get
\begin{align}
      & \quad\eqref{Kf_est_I_tri_1} \notag \\ 
      &\lesssim \int_{\vec{a} \in \mathbb{S}^{d-1}(u')} \int_0^r l^{1-2s} \int_{\vec{b}\in \mathbb{S}^{d-1}(P)} \delta(a \cdot b)\int_0^{\infty} {k}^{\gamma+2s+d-1}f(k\vec{b}+P) \; dk db \;\; dlda +
      \int_{\vec{a} \in \mathbb{S}^{d-1}(u')}\int_0^r l^{\gamma+2} \; dlda  \notag \\
     &\lesssim  \int_0^r l^{1-2s} \int_{v_* \in \mathbb{R}^{d}} {|v_*-P|^{\gamma+2s}}f(v_*) \; dv_* \;\; dl +
     \int_0^r l^{\gamma+2} \;dl \label{extend_Kf_est_I} \\
     &\lesssim \int_0^r l^{1-2s} \int_{v_* \in \mathbb{R}^{d}} {|v_*-u'|^{\gamma+2s}}f(v_*) \; dv_*  dl + \int_0^r l^{1-2s} \int_{v_* \in \mathbb{R}^{d}} {|u-u'|^{\gamma+2s}}f(v_*) \; dv_* dl + r^{\gamma+3}  \notag\\
     & \lesssim \int_0^r l^{1-2s} \int_{v_* \in \mathbb{R}^{d}} {|v_*-u'|^{\gamma+2s}}f(v_*) \; dv_*  dl + \int_0^r l^{\gamma+1}\; dl + r^{\gamma+3} \label{int_Kf_est_I} \\ 
     &\lesssim r^{2-2s}(1+|u'|)^{\gamma+2s}+r^{\gamma+2}+r^{\gamma+3} \leq \text{max}\{r^{2-2s},r^{\gamma+2},r^{\gamma+3}\}(1+|u'|)^{\gamma+2s} \label{r_Kf_est_I}
\end{align} since $M_0, E_0 < \infty$ and $\gamma+2s \in [0,2].$ Also, we can integrate \eqref{int_Kf_est_I}, because $0<2-2s \leq \gamma+2<\gamma+3.$  
The $\text{max}\{r^{2-2s},r^{\gamma+2},r^{\gamma+3}\}$ is $r^{2-2s}$ for $0<r\leq 1$ and $r^{\gamma+3}$ for $1<r<+\infty$ in \eqref{r_Kf_est_I}. \\

\noindent The inequalities \eqref{Kf_est_2_I} and \eqref{Kf_est_4_I} are obtained similarly. We have that
\begin{align}
    &\int_{\mathbb{R}^d/B_{r}(u')}K_f(u,u')\; du
    \lesssim
     \int_{\mathbb{R}^d/B_{r}(u')}|u-u'|^{-d-2s} \int_{v_*\in E_{Pu'}}|v_*-P|^{\gamma+2s+1}f(v_*) \; dv_*du  \notag \\
    &\lesssim \int_{\vec{a} \in \mathbb{S}^{d-1}(u')} \int_r^{\infty} l^{-1-2s} \int_{\vec{b}\in \mathbb{S}^{d-1}(P)} \delta(a \cdot b)\int_0^{\infty} {k}^{\gamma+2s+d-1}f(k\vec{b}+P) \; dk db \;\; dlda  \notag \\
    &\lesssim \int_r^{\infty} l^{-1-2s} \int_{v_* \in \mathbb{R}^{d}} {|v_*-P|^{\gamma+2s}}f(v_*) \; dv_*  dl  \notag \\
    & \lesssim \int_r^{\infty} l^{-1-2s} \int_{v_* \in \mathbb{R}^{d}} {|v_*-u'|^{\gamma+2s}}f(v_*) \; dv_*  dl + \int_r^{\infty} l^{-1-2s} \int_{v_* \in \mathbb{R}^{d}} {|u-u'|^{\gamma+2s}}f(v_*) \; dv_*  dl  \notag  \\
    &\lesssim \int_r^{\infty} l^{-1-2s} \int_{v_* \in \mathbb{R}^{d}} {|v_*-u'|^{\gamma+2s}}f(v_*) \; dv_*  dl + \int_r^{\infty} l^{\gamma-1}\; dl  \label{int_Kf_est_1_I} \\
    &\lesssim r^{-2s}(1+|u'|)^{\gamma+2s}+r^{\gamma}
    \leq \text{max}\{r^{-2s},r^{\gamma}\}(1+|u'|)^{\gamma+2s} \label{r_Kf_est_1_I}
\end{align} since $M_0, E_0 < \infty$ and $\gamma+2s \in [0,2].$ To integrate \eqref{int_Kf_est_1_I}, we use the condition, $-2s \leq \gamma<0$. The $\text{max}\{r^{-2s},r^{\gamma}\}$ is $r^{-2s}$ for $0<r\leq 1$ and $r^{\gamma}$ for $1<r<+\infty$ in \eqref{r_Kf_est_1_I}.
\end{proof} \vspace{3mm}

\begin{lemma} ($Q^s$ estimate for inelastic model) \label{Q_est_I} Assume that $\gamma<0$ and $\gamma+2s \in [0,2]$ (moderately soft potentials) and $M_0, E_0 < +\infty$ in \eqref{hydro_I}. Let $\psi$ be a bounded, $C^2$ function. The $Q^{s}(f,\psi)(v)$ in \eqref{Qs_Kf_nc_I} is written by
\begin{align*}
    Q^{s}(f,\psi)(v) = \int K_f(u,u')\delta(u'-v)(\psi(u)-\psi(u')) \; dudu' =\int K_f(u,v)(\psi(u)-\psi(v)) \; du.
\end{align*} \\ 
If $\psi$ satisfies $\|\psi\|_{L^{\infty}} \leq \text{max} \{\|\nabla^2 \psi\|_{L^\infty}, \|\nabla\psi\|_{L^\infty} \}$, then 
\begin{align} \label{Q_est_1_I}
     |Q^{s}(f,\psi)(v)| 
    \lesssim
  \|\psi\|_{L^{\infty}}^{1-s}(\text{max}\{\|\nabla^2 \psi\|_{L^\infty}, \|\nabla \psi\|_{L^\infty} \})^{s}(1+|v|)^{\gamma+2s}.
\end{align} \\
Or else, if $\psi$ satisfies $\|\psi\|_{L^{\infty}} > \text{max} \{\|\nabla^2 \psi\|_{L^\infty}, \|\nabla\psi\|_{L^\infty} \}$, then
\begin{align} \label{Q_est_2_I}
     |Q^{s}(f,\psi)(v)| 
    \lesssim
    \|\psi\|_{L^{\infty}}^{1+\gamma/3}(\text{max}\{\|\nabla^2 \psi\|_{L^\infty}, \|\nabla \psi\|_{L^\infty} \})^{-\gamma/3}(1+|v|)^{\gamma+2s}.
\end{align}
\end{lemma}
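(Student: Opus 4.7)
The plan is to follow the scheme of Lemma~2.3 in \cite{IM2020}, feeding in the kernel bounds of Lemma~\ref{Kf_est_I} while carefully accounting for the fact that $K_f$ here is not symmetric in its second argument. I would first split the domain by a parameter $r>0$ to be optimized:
\begin{align*}
Q^{s}(f,\psi)(v) = \int_{B_{r}(v)} K_f(u,v)(\psi(u)-\psi(v))\,du + \int_{\mathbb{R}^d\setminus B_{r}(v)} K_f(u,v)(\psi(u)-\psi(v))\,du.
\end{align*}
On the far piece, bound $|\psi(u)-\psi(v)|\leq 2\|\psi\|_{L^{\infty}}$ and insert \eqref{Kf_est_2_I} (for $r\leq 1$) or \eqref{Kf_est_4_I} (for $r>1$), obtaining respectively contributions of order $\|\psi\|_{L^{\infty}}(1+|v|)^{\gamma+2s}r^{-2s}$ and $\|\psi\|_{L^{\infty}}(1+|v|)^{\gamma+2s}r^{\gamma}$.

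On the near piece I Taylor-expand to second order, $\psi(u)-\psi(v)=\nabla\psi(v)\cdot(u-v)+R(u,v)$ with $|R(u,v)|\leq\tfrac{1}{2}\|\nabla^{2}\psi\|_{L^{\infty}}|u-v|^{2}$. The remainder yields a contribution of size $\|\nabla^{2}\psi\|_{L^{\infty}}(1+|v|)^{\gamma+2s}r^{2-2s}$ (respectively $r^{\gamma+3}$) by direct application of \eqref{Kf_est_1_I} (respectively \eqref{Kf_est_3_I}). The linear term is the delicate piece, because $K_f$ lacks the reflection symmetry that would make it vanish. Here I decompose $K_f=\overline{K}_{f}-(\overline{K}_{f}-K_f)$; the $\overline{K}_{f}$-contribution $\nabla\psi(v)\cdot\int_{B_{r}(v)}\overline{K}_{f}(u,v)(u-v)\,du$ vanishes by the symmetry $\overline{K}_{f}(v+w,v)=\overline{K}_{f}(v-w,v)$ noted after \eqref{K'_f_nc_I}, while the non-negative corrector $\overline{K}_{f}-K_f$ is controlled by combining the algebraic identity $|v_{*}-u'|^{2}-|v_{*}-P|^{2}=|u'-P|^{2}$ with $|u'-P|=(\tfrac{1}{\beta}-1)|u-u'|$ and the mean value theorem applied to $t\mapsto t^{\gamma+2s+1}$ (valid since $\gamma+2s\geq 0$); this produces an additional factor of $|u-u'|$ in the kernel, after which the radial computation of Lemma~\ref{Kf_est_I} yields a bound of order $\|\nabla\psi\|_{L^{\infty}}(1+|v|)^{\gamma+2s}r^{2-2s}$ (respectively $r^{\gamma+3}$), which is absorbable into $\max\{\|\nabla\psi\|_{L^{\infty}},\|\nabla^{2}\psi\|_{L^{\infty}}\}\,r^{2-2s}$.

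Combining both pieces gives, for $r\leq 1$,
\begin{align*}
|Q^{s}(f,\psi)(v)|\lesssim (1+|v|)^{\gamma+2s}\Bigl(\|\psi\|_{L^{\infty}}\,r^{-2s}+\max\{\|\nabla\psi\|_{L^{\infty}},\|\nabla^{2}\psi\|_{L^{\infty}}\}\,r^{2-2s}\Bigr),
\end{align*}
and the analogous inequality with exponents $\gamma$ and $\gamma+3$ when $r>1$. Optimizing over $r$ produces two regimes: the choice $r^{*}\sim(\|\psi\|_{L^{\infty}}/\max\{\|\nabla\psi\|_{L^{\infty}},\|\nabla^{2}\psi\|_{L^{\infty}}\})^{1/2}$ lies in $(0,1]$ precisely under the hypothesis of \eqref{Q_est_1_I} and balances the two powers of $r$ to $\|\psi\|_{L^{\infty}}^{1-s}(\max\{\cdot\})^{s}$; the choice $r^{*}\sim(\|\psi\|_{L^{\infty}}/\max\{\cdot\})^{1/3}$ exceeds $1$ under the complementary hypothesis and yields the balance $\|\psi\|_{L^{\infty}}^{1+\gamma/3}(\max\{\cdot\})^{-\gamma/3}$ of \eqref{Q_est_2_I}. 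The hardest part will be the corrector estimate on $\overline{K}_{f}-K_f$: unlike the elastic setting of \cite{IM2020} the linear-term cancellation is not free, and the geometry $|u'-P|=(\tfrac{1}{\beta}-1)|u-u'|$ together with the hypothesis $\gamma+2s\geq 0$ must be exploited quantitatively to retain enough extra powers of $|u-u'|$ to render the resulting integral convergent at $u=v$, particularly in the range $s\in[\tfrac{1}{2},1)$ where the uncancelled first-order term would otherwise diverge.
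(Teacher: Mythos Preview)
Your proposal is correct and matches the paper's proof almost step for step: the near/far split at scale $r$, the Taylor expansion on $B_r(v)$, the use of the symmetric kernel $\overline{K}_f$ to annihilate the first-order term, the separate estimate on the corrector $\overline{K}_f-K_f$, and the two-regime optimization in $r$ all appear in the paper in the same order. The only minor difference is in how the corrector is bounded: the paper uses the cruder inequality $|v_*-u'|^{\gamma+2s+1}-|v_*-P|^{\gamma+2s+1}\lesssim|u-u'|^{\gamma+2s+1}$ (an immediate consequence of \eqref{tri_inequ_Kf_est_I}) to obtain $(*)\lesssim r^{\gamma+2}$ directly, rather than the mean-value-theorem route you sketch; since $r^{\gamma+2}\leq\max\{r^{2-2s},r^{\gamma+3}\}$ whenever $\gamma+2s\geq 0$, either bound feeds into the same final optimization.
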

\begin{proof} 
From \eqref{r_Kf_est_1_I}, we get
\begin{flalign} \label{Kf_out_Q_est_I}
    |\int \delta(u'-v)\int_{\mathbb{R}^d/B_r(u')} K_f(u,u')(\psi(u)-\psi(u')) \; dudu'| \lesssim \text{max}\{r^{-2s}, r^{\gamma}\} \|\psi\|_{L^{\infty}}(1+|v|)^{\gamma+2s}.
\end{flalign}
Now, we consider the domain $B_r(u').$ 
Using the triangle inequality and the fact that \\ $\;p.v\;\int_{B_r(v)}(u-u')\cdot \nabla\phi(u')\overline{K}_f(u,u')\;du $ is zero. Then, from \eqref{r_Kf_est_I}, we get
\begin{align}
&\quad |\int \delta(u'-v)\int_{B_r(u')} K_f(u,u')(\psi-\psi')\;dudu'|\notag \\
&\leq 
     |\int \delta(u'-v)\int_{B_r(u')} \overline{K}_{f}(u,u')(\psi-\psi')dudu'| + |\int \delta(u'-v)\int_{B_r(u')} (\overline{K}_{f}(u,u')-K_f(u,u'))(\psi-\psi')dudu'|  \notag \\ 
     &\lesssim \text{max}\{r^{2-2s}, r^{\gamma+2},  r^{\gamma+3}\} 
 \|\nabla^2 \psi\|_{L^\infty}(1+|v|)^{\gamma+2s} \notag \\
 &\quad + \|\nabla \psi\|_{L^\infty}| \underbrace{\int \delta(u'-v)\int_{B_r(u')} |\overline{K_f}(u,u')-K_f(u,u')||u-u'|\; dudu'| }_{(*)}, \label{Kf_inter_in_r_Q_est_I}
\end{align} where $\psi = \psi(u), \;\psi'=\psi(u')$.
\vspace{4mm}

\noindent Meanwhile, from \eqref{tri_inequ_Kf_est_I}, we get
\Be \label{tri_inequ_power}
    |v_*-u'|^{\gamma+2s+1} \lesssim |v_*-P|^{\gamma+2s+1}+|u-u'|^{\gamma+2s+1}.
\Ee
Then $(*)$ in the second term of \eqref{Kf_inter_in_r_Q_est_I} is estimated by  
\begin{align}
    |(*)| 
    &\lesssim |\int \delta(u'-v) \int_{B_r(u')} |u-u'|^{1-d-2s} \int_{v_* \in  E_{Pu'}} ({|v_*-u'|}^{\gamma+2s+1}-|v_*-P|^{\gamma+2s+1})f(v_*) \; dv_*dudu'|  \notag \\
    &\lesssim |\int \delta(u'-v) \int_{B_r(u')} |u-u'|^{1-d-2s} \int_{v_* \in  E_{Pu'}} {|u-u'|}^{\gamma+2s+1}f(v_*) \; dv_*dudu'|  \notag \\
	&\lesssim |\int_{B_r(v)} |u-v|^{\gamma-d+2} du| \lesssim\; \int_0^{r} l^{\gamma+1} \; dl \; \lesssim \; r^{\gamma+2}. \label{second_term_in_Q_est_I}
\end{align} \vspace{3mm}

\noindent Combining \eqref{Kf_inter_in_r_Q_est_I}
and \eqref{second_term_in_Q_est_I}, we get 
\begin{equation} \label{Kf_int_Q_est_I}
\begin{split}
&\quad|\int \delta(u'-v)\int_{B_r(u')} K_f(u,u')(\psi(u)-\psi(u')\;dudu'| \\
& \lesssim \text{max}\{r^{2-2s}, r^{\gamma+2},  r^{\gamma+3}\}
 \|\nabla^2 \psi\|_{L^\infty}(1+|v|)^{\gamma+2s}+r^{\gamma+2}\|\nabla \psi\|_{L^\infty}  \\
&\lesssim  \text{max}\{r^{2-2s}, r^{\gamma+2},  r^{\gamma+3}\} \cdot \text{max}\{\|\nabla^2 \psi\|_{L^\infty}, \|\nabla \psi\|_{L^\infty} \}(1+|v|)^{\gamma+2s}.  \\
\end{split} 
\end{equation} \vspace{3mm}

\noindent From \eqref{Kf_out_Q_est_I} and \eqref{Kf_int_Q_est_I}, we get
\begin{align} 
    |Q^{s}(f,\psi)(v)| &\leq    |\int_{\mathbb{R}^d/B_r(v)} K_f(u,v)(\psi(u)-\psi(v)) \; du| +
    |\int_{B_r(v)} K_f(u,v)(\psi(u)-\psi(v))\;du| \notag  \\ 
    &\lesssim
    \text{max}\{r^{-2s}, r^{\gamma}\} \|\psi\|_{L^{\infty}}(1+|v|)^{\gamma+2s} \label{result_Q_est_I} \\
    &\quad +\text{max}\{r^{2-2s}, r^{\gamma+2},  r^{\gamma+3}\} \cdot \text{max}\{\|\nabla^2 \psi\|_{L^\infty}, \|\nabla \psi\|_{L^\infty} \}(1+|v|)^{\gamma+2s}. \notag
\end{align} \vspace{3mm}

\noindent Choose $r=(\|\psi\|_{L^{\infty}}/\text{max}\{\|\nabla^2 \psi\|_{L^\infty}, \|\nabla \psi\|_{L^\infty} \})^{1/2}.$\\
\noindent If $0<r\leq 1$, then we obtain 
\begin{align*}
     |Q^{s}(f,\psi)(v)| &\lesssim r^{-2s}\|\psi\|_{L^{\infty}}(1+|v|)^{\gamma+2s} + r^{2-2s}\text{max}\{\|\nabla^2 \psi\|_{L^\infty}, \|\nabla \psi\|_{L^\infty} \}(1+|v|)^{\gamma+2s}\\
     &\lesssim  \|\psi\|_{L^{\infty}}^{1-s}(\text{max}\{\|\nabla^2 \psi\|_{L^\infty}, \|\nabla \psi\|_{L^\infty} \})^{s}(1+|v|)^{\gamma+2s}
\end{align*} by \eqref{result_Q_est_I}.\vspace{3mm} 

\noindent Choose $r=(\|\psi\|_{L^{\infty}}/\text{max}\{\|\nabla^2 \psi\|_{L^\infty}, \|\nabla \psi\|_{L^\infty} \})^{1/3}.$\\
\noindent If $1<r<+\infty$, then we obtain
\begin{align*}
      |Q^{s}(f,\psi)(v)| &\lesssim r^{\gamma}\|\psi\|_{L^{\infty}}(1+|v|)^{\gamma+2s} + r^{\gamma+3}\text{max}\{\|\nabla^2 \psi\|_{L^\infty}, \|\nabla \psi\|_{L^\infty} \}(1+|v|)^{\gamma+2s}\\
     &\lesssim  \|\psi\|_{L^{\infty}}^{1+\gamma/3}(\text{max}\{\|\nabla^2 \psi\|_{L^\infty}, \|\nabla \psi\|_{L^\infty} \})^{-\gamma/3}(1+|v|)^{\gamma+2s}
\end{align*} by \eqref{result_Q_est_I}. In conclusion, we estimate on $Q^{s}(f,\psi)(v)$ when  $\psi(v)\in C^2.$ 
\end{proof} \vspace{2mm}

\subsubsection{The nonsingular part, $Q^{ns}(f,f)(v)$ for inelastic model}
We prove the cancellation lemma (see \cite{AD2000} for elastic model) in inelastic model under the assumption that $\gamma > -d$. Also, we prove that $Q^{ns}(f,f)$ is well-defined and positive.

\begin{lemma} (Cancellation lemma for inelastic model) \label{CC_I}
Suppose $B(|v-v_*|,\cos a)=|v-v_*|^{\gamma}b(\cos a), \;a \in [0,\pi]$ and $\gamma > -d$ in \eqref{B_nc_I'}.
Then $Q^{ns}(f,f)(v)$ in \eqref{Qns_nc_I} is written by  
\begin{align}  \label{Lemma_CC_I_equ}
\begin{split}
     Q^{ns}(f, f)(v) &= \int f(v_*)\int B(|u-v_*|, \cos a)(\delta(u'-v)f(u')-\delta(u-v)f(u)) \; d\sigma du dv_* \\&=\int f(v_*) \int \delta(u-v)f(u)S(|u-v_*|) \; du
    dv_* 
    =f(v) \int f(v_*)S(|v-v_*|) \;
    dv_*,
\end{split}
\end{align}where  
\begin{align*}
    S(|v-v_*|)
    &= |\mathbb{S}^{d-2}||v-v_*|^{\gamma}\int_0^\pi b(\cos w)\sin^{d-2}w [(\frac{\beta}{2}\cos a + (1-\frac{\beta}{2})\cos A)^{-d-\gamma}-1] \; dw.
\end{align*}
Here, $w \doteq \sin^{-1}(\frac{\beta}{2-\beta}\sin a)+a \doteq A+a.(0<A<\frac{\pi}{2},\; 0<w,a<\pi.)$
Moreover, $Q^{ns}(f,f)(v)$ is well-defined and positive. \end{lemma}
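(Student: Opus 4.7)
The plan is to reduce $Q^{ns}(f,f)(v)$ to the stated convolution form by (i) pulling out the factor $f(v)$ using the two delta functions and (ii) evaluating the remaining angular integral via a change of variables in the $u$-variable. Under $\delta(u-v)$ the second term in \eqref{Qns_nc_I} immediately becomes $f(v)\int f(v_*)\int B(|v-v_*|,\cos\theta)\,d\sigma\,dv_*$, giving the $-1$ piece of $S$ after the polar decomposition $d\sigma=|\mathbb{S}^{d-2}|\sin^{d-2}w\,dw$. Under $\delta(u'-v)$ the first term satisfies $g(u')=f(v)$, so one is left with
\begin{equation*}
Q^{ns}(f,f)(v) = f(v)\int f(v_*)\bigl[I_1(v,v_*)-I_2(v,v_*)\bigr]dv_*,\qquad I_1=\int B(|u-v_*|,\cos\theta)\delta(u'-v)\,d\sigma du.
\end{equation*}

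For $I_1$, fix $\sigma,v_*$ and apply the change of variables $u\mapsto u'$ on $\mathbb{R}^d$. Differentiating \eqref{v'_I'} yields $\partial u'/\partial u = \tfrac{2-\beta}{2}I+\tfrac{\beta}{2}\sigma\otimes\omega$ with $\omega=(u-v_*)/|u-v_*|$, and the rank-one determinant identity gives $|\det(\partial u'/\partial u)|=\bigl(\tfrac{2-\beta}{2}\bigr)^{d-1}\tfrac{(2-\beta)+\beta\cos\theta_0}{2}>0$, where $\theta_0\in[0,\pi]$ is the angle between $\omega$ and $\sigma$, i.e.\ the collision angle. The delta fixes $u'=v$; projecting the identity $v-v_*=\tfrac{2-\beta}{2}(u-v_*)+\tfrac{\beta l}{2}\sigma$ onto $\hat R=(v-v_*)/|v-v_*|$ (with $l=|u-v_*|$) gives a quadratic whose positive root is $l=2|v-v_*|/D$, where $D(\theta_0)=\sqrt{(2-\beta)^2+2\beta(2-\beta)\cos\theta_0+\beta^2}$, together with $\cos\phi=[(2-\beta)\cos\theta_0+\beta]/D$ and $\sin\phi=(2-\beta)\sin\theta_0/D$, with $\phi$ the angle of $\sigma$ relative to $\hat R$. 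Writing $d\sigma=|\mathbb{S}^{d-2}|\sin^{d-2}\phi\,d\phi$ and then converting $\phi\to\theta_0$ via $d\phi/d\theta_0=(2-\beta)[(2-\beta)+\beta\cos\theta_0]/D^2$, the factors $(2-\beta)^{d-2}$ and $[(2-\beta)+\beta\cos\theta_0]$ cancel exactly, leaving
\begin{equation*}
I_1 = |v-v_*|^\gamma\,2^{d+\gamma}\,|\mathbb{S}^{d-2}|\int_0^\pi\frac{b(\cos\theta_0)\sin^{d-2}\theta_0}{D(\theta_0)^{d+\gamma}}\,d\theta_0.
\end{equation*}

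To recognise this as the statement's formula, identify the paper's dummy $w$ with $\theta_0$, the paper's variable $a$ with $\phi$, and the paper's $A$ with $\theta_0-\phi$. The relation $\sin A=\tfrac{\beta}{2-\beta}\sin a$ is then just $\sin(\theta_0-\phi)=\tfrac{\beta}{2-\beta}\sin\phi$, which follows instantly from the explicit formulas for $\sin\phi,\cos\phi$. The key algebraic identity
\begin{equation*}
\beta\cos\phi+(2-\beta)\cos(\theta_0-\phi)=D(\theta_0),
\end{equation*}
verified by a single substitution, gives $2K=D$ and hence $K^{-d-\gamma}=2^{d+\gamma}/D^{d+\gamma}$; this converts $I_1$ exactly into $|\mathbb{S}^{d-2}||v-v_*|^\gamma\int_0^\pi b(\cos w)\sin^{d-2}w\,K^{-d-\gamma}\,dw$, and subtracting the trivial $I_2=|\mathbb{S}^{d-2}||v-v_*|^\gamma\int_0^\pi b(\cos w)\sin^{d-2}w\,dw$ produces the claimed $S$.

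Positivity and well-definedness then follow easily. Since $K=\tfrac{1}{2}(\beta\cos a+(2-\beta)\cos A)$ is a convex combination of two cosines, $K\leq 1$, and $K>0$ reduces after squaring to $(2-\beta)^2\cos^2 A>\beta^2\cos^2 a$, which in view of $(2-\beta)^2\cos^2 A=(2-\beta)^2-\beta^2\sin^2 a$ further reduces to $(2-\beta)^2>\beta^2$, true for $\beta<1$. Thus $K\in(0,1]$, and the assumption $\gamma>-d$ makes $K^{-d-\gamma}-1\geq 0$, proving $S\geq 0$ and $Q^{ns}(f,f)(v)\geq 0$. Near $w=0$ a Taylor expansion gives $K-1=O(w^2)$ and hence $K^{-d-\gamma}-1=O(w^2)$, which balances the $w^{-1-2s}$ singularity of $b(\cos w)\sin^{d-2}w$ to yield an integrable $w^{1-2s}$ contribution (since $s<1$); near $w=\pi$, $K\to 1-\beta>0$ and every other factor is controlled, so the integrand is bounded. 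The main obstacle I foresee is executing the two successive angular substitutions carefully enough that the identity $2K=D$ emerges naturally—if one is sloppy with the Jacobians, the paper's compact form of $K$ looks like a rabbit pulled out of a hat rather than the natural output of the change of variables.
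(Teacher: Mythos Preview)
Your proof is correct and follows essentially the same route as the paper: both perform the change of variables $u\mapsto u'$ in the first term of $Q^{ns}$, then re-parameterise the angular integral so that the integrand is expressed at the post-collisional angle. The only real difference is presentational---the paper derives the angular relations via the law of sines in the triangle $(v_*,u,u')$ and the map $T(\cos b)=\cos a$, whereas you compute everything algebraically through the auxiliary function $D(\theta_0)$ and then match it to the paper's $K$ via the identity $2K=D$; the positivity and well-definedness arguments are likewise equivalent.
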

\begin{proof}
Recall \eqref{v'_I'}, 
\begin{align} \label{v'_I''}
\begin{split}
     u'=\frac{u+v_*}{2}+\frac{1-\beta}{2}(u-v_*)+\frac{\beta}{2}\;|u-v_*|{\sigma},\\
     v_*'=\frac{u+v_*}{2}-\frac{1-\beta}{2}(u-v_*)-\frac{\beta}{2}\;|u-v_*|{\sigma}.
\end{split}
\end{align}

\begin{figure} [t]
\centering
\includegraphics[width=6cm]{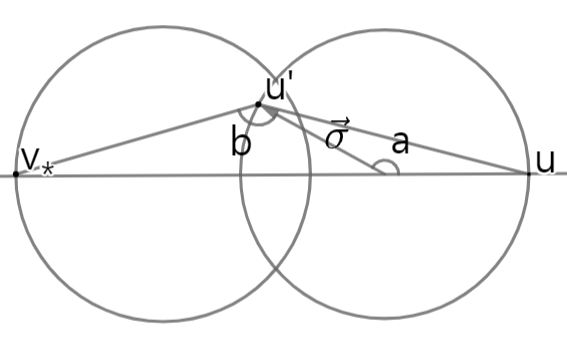} 
\caption{The relation of $u,v_*$ and $u'$ \label{CC_I_F}}
\end{figure} \noindent Let us write $k$, ${'k}$ and angles $a,b$ as
\begin{align} \label{CC_I_k_k'}
    k \cdot {\sigma} \doteq \frac{u-v_*}{|u-v_*|}\cdot {\sigma} = \cos a, \quad
    'k \cdot {\sigma} \doteq \frac{u'-v_*}{|u'-v_*|}\cdot {\sigma} = \cos b,
    \quad
    a,b \in (0,\pi). \text{\quad(See Figure \ref{CC_I_F}.)}
\end{align} Using the law of sine in Figure \ref{CC_I_F}, we get 
\begin{align} \label{CC_sine_law_I}
    \frac{(1-\frac{\beta}{2})|u-v_*|}{\sin b} = \frac{|u'-v_*|}{\sin a} =\frac{\frac{\beta}{2}|u-v_*|}{\sin(a-b)}.
\end{align}
Here, $a=b=0$, $a=b=\pi$ and if $a,b \in (0,\pi)$, then $a \neq b.$ From \eqref{CC_sine_law_I}, we get $\sin(a-b) = \frac{\beta}{2-\beta}\sin b.$ Since $0<\cos^{-1}(\frac{u'-v_*}{|u'-v_*|}\cdot \frac{u-v_*}{|u-v_*|})<\frac{\pi}{2}$, we infer $0<a-b<\frac{\pi}{2}$ and $a-b = \sin^{-1}(\frac{\beta}{2-\beta}\sin b).$ 

\noindent So, we define the map $T$ as
\begin{align} \label{CC_T_I}
    T(\cos b) = \cos(\sin^{-1}(\frac{\beta}{2-\beta}\sin b)+b) = \cos a, 
\end{align} then 
\begin{align*}
    T' = \frac{\sin a}{\sin b}(1+\frac{\frac{\beta}{2-\beta}\cos b}{\sqrt{1-(\frac{\beta}{2-\beta})^{2}\sin^{2}a}})>0 
\end{align*} since $\beta \in (\frac{1}{2},1)$ and $a,b \in (0,\pi).$ Also, we can define the function $\Psi_\sigma(u')=u$, where $v_*$ is fixed.\\ From \eqref{CC_sine_law_I}, we have that
\begin{align*}
    |\Psi_\sigma(u')-v_*|=|u-v_*|=\frac{\sin b}{\sin a}(1-\frac{\beta}{2})^{-1}|u'-v_*|=\frac{\sin b}{\sqrt{1-(T(\cos b))^2}}(1-\frac{\beta}{2})^{-1}|u'-v_*|. 
\end{align*} After renaming $u'$ to $u$ above, we obtain
\begin{align} \label{psi_replace_I}
    |\Psi_\sigma(u)-v_*|=\frac{\sin a}{\sqrt{1-(T(\cos a))^2}}(1-\frac{\beta}{2})^{-1}|u-v_*|.
\end{align} 
For the first term of $Q^{ns}(f,f)$, we perform change of variables $u' \rightarrow u$ with jacobian determinant 
\begin{align*}
|\frac{du'}{du}|=(1-\frac{\beta}{2})^{d}(1+\frac{\beta}{2-\beta}\frac{u-v_*}{|u-v_*|}\cdot \sigma)>0 
\end{align*}
and rename $u'$ to $u$ in \eqref{replace_CC_I}. We replace $|\Psi_\sigma(u)-v_*|$ in \eqref{rename_CC_I} by \eqref{psi_replace_I}, then we have that 
\begin{align} 
    & \quad \int B(|u-v_*|,\cos a) \delta(u'-v)f(u') \;d\sigma du \label{B_CC_I} \\
     &= (1-\frac{\beta}{2})^{-d}\int B(|u-v_*|,\cos a)\delta(u'-v)f(u')(1+\frac{\beta}{2-\beta}k\cdot \sigma)^{-1} \;d\sigma du' \notag \\ 
    &= (1-\frac{\beta}{2})^{-d}\int B(|\Psi_\sigma(u')-v_*|,T(\cos b)) \delta(u'-v)f(u')(1+\frac{\beta}{2-\beta}T(\cos b))^{-1} \;d\sigma du'  \label{replace_CC_I}\\
    &=(1-\frac{\beta}{2})^{-d}\int B(|\Psi_\sigma(u)-v_*|,T(\cos a)) \delta(u-v)f(u)(1+\frac{\beta}{2-\beta}T(\cos a))^{-1} \;d\sigma du \label{rename_CC_I} \\
    &=(1-\frac{\beta}{2})^{-d}\int B(\frac{\sin a}{\sqrt{1-(T(\cos a))^2}}(1-\frac{\beta}{2})^{-1}|u-v_*|,\;T(\cos a)) \delta(u-v)f(u)(1+\frac{\beta}{2-\beta}T(\cos a))^{-1} \;d\sigma du. \label{put_B_CC_I}
\end{align}
\noindent Let $w \doteq\sin^{-1}(\frac{\beta}{2-\beta}\sin a)+a$, and $0 < w < \pi.$ Note that $T(\cos a) = \cos w.$
We perform change of variables $\sigma \rightarrow a$ and $a \rightarrow w$ with jacobian determinant $|\frac{dw}{da}| = 1+\frac{\frac{\beta}{2-\beta}\cos a}{\sqrt{1-(\frac{\beta}{2-\beta})^{2}\sin^{2}a}}>0$. Using 
$B(|u-v_*|, \cos a)=|u-v_*|^\gamma b(\cos a)$, we get 
\begin{align}
    &\quad \eqref{put_B_CC_I} \notag \\ 
    &=|\mathbb{S}^{d-2}|(1-\frac{\beta}{2})^{-d}\int \int_0^\pi \sin^{d-2}a\;B(\;\frac{\sin a}{\sin w}(1-\frac{\beta}{2})^{-1}|u-v_*|,\cos w)
    \delta(u-v)f(u)(1+\frac{\beta \cos w}{2-\beta})^{-1}|\frac{da}{dw}|\; dwdu \notag \\
    &=|\mathbb{S}^{d-2}|(1-\frac{\beta}{2})^{-d-\gamma}\int \int_0^\pi |u-v_*|^{\gamma} \frac{\sin^{\gamma+d-2}a}{\sin^{\gamma}w}(1+\frac{\beta}{2-\beta}\cos w)^{-1}b(\cos w) \delta(u-v)f(u)|\frac{da}{dw}| \; dwdu. \label{before_sim_CC_I} 
\end{align} \vspace{2mm}

\noindent Let $A \doteq \sin^{-1}(\frac{\beta}{2-\beta}\sin a)$, and $0 < A < \frac{\pi}{2}.$ Note that $w = A+a.$ 
Then we can simplify in \eqref{before_sim_CC_I} by using 
\begin{align*}
    &\frac{\sin a}{\sin w} = (\frac{\beta}{2-\beta}\cos a + \cos A)^{-1}, \quad  1+\frac{\beta}{2-\beta}\cos w =\cos A\;(\frac{\beta}{2-\beta}\cos a + \cos A),\\ &|\frac{dw}{da}|=1+\frac{\frac{\beta}{2-\beta}\cos a}{\sqrt{1-(\frac{\beta}{2-\beta})^{2}\sin^{2}a}} =\frac{1}{\cos A}  (\frac{\beta}{2-\beta}\cos a + \cos A),
\end{align*} and get \eqref{after_sim_CC_I}. We write
\begin{align} \label{after_sim_CC_I}
     &\eqref{before_sim_CC_I} 
     &=|\mathbb{S}^{d-2}|\int \int_0^\pi |u-v_*|^{\gamma}b(\cos w)\sin^{d-2}w [(\frac{\beta}{2}\cos a + (1-\frac{\beta}{2})\cos A)^{-d-\gamma}\delta(u-v)f(u) \; dwdu. 
\end{align}
Also, we have that
\begin{align} \label{secondterm_CC_I}
	&\int B(|u-v_*|, \cos a)\delta(u-v)f(u) \; d\sigma du  \notag\\
	&= |\mathbb{S}^{d-2}|\int \delta(u-v)f(u)|u-v_*|^{\gamma}\;du \int_0^\pi \sin^{d-2}a \;b(\cos a)\;da   \notag \\
	&= |\mathbb{S}^{d-2}|\int \delta(u-v)f(u)|u-v_*|^{\gamma}\;du \int_0^\pi \sin^{d-2}w \;b(\cos w)\;dw.
\end{align}
Combining \eqref{after_sim_CC_I} and \eqref{secondterm_CC_I}, we deduce
\begin{equation*}
    \int B(|u-v_*|,\cos a)(\delta(u'-v)f(u')-\delta(u-v)f(u)) \;d\sigma du
     \doteq \int S(|u-v_*|)\delta(u-v)f(u) \;du, 
\end{equation*} where 
\begin{align} \label{S_nc_I}
    S(|u-v_*|)
    &= |\mathbb{S}^{d-2}||u-v_*|^{\gamma}\int_0^\pi b(\cos w)\sin^{d-2}w [(\frac{\beta}{2}\cos a + (1-\frac{\beta}{2})\cos A)^{-d-\gamma}-1] \; dw > 0.
\end{align}
Here, $w = \sin^{-1}(\frac{\beta}{2-\beta}\sin a)+a = A+a$. 
Let us check whether $S(|u-v_*|)$ is positive and well-defined. First, the following inequality holds :
\begin{align} \label{Aa}
     0 < \sin A = \frac{\beta}{2-\beta}\sin a < \sin a
     \text{\quad and \quad }
    \; 0 < |\cos a| < \cos A
\end{align}
since\;$A \in (0, \frac{\pi}{2}),\; \beta \in (\frac{1}{2}, 1).$ From \eqref{Aa} and $\beta \in (\frac{1}{2},1),$ we get 
\begin{align*}
    0 < \frac{\beta}{2}\cos a + (1-\frac{\beta}{2})\cos A < \cos A.
\end{align*} 
Due to $\gamma>-d$, we can get easily 
\begin{align*}
1< \cos^{-d-\gamma}A < (\frac{\beta}{2}\cos a + (1-\frac{\beta}{2})\cos A)^{-d-\gamma}, 
\end{align*} and $S(|u-v_*|)$ is positive. 
Using the inequality 
\begin{align*}
    1-(\frac{\beta}{2}\cos a + (1-\frac{\beta}{2})\cos A)^{d+\gamma} &\leq \text{max}\{1, d+\gamma\}(1-(\frac{\beta}{2}\cos a + (1-\frac{\beta}{2})\cos A))) \\
     &\lesssim \sin^{2}\frac{a}{2}+\sin^{2}\frac{A}{2}
 \leq 2\sin^{2}\frac{w}{2},
\end{align*} we get the $S(|u-v_*|)$ is well-defined. Here, $0<\frac{A}{2}=\frac{w-a}{2}<\frac{w}{2}<\frac{\pi}{2}$ and $0<\frac{a}{2}=\frac{w-A}{2}<\frac{w}{2}<\frac{\pi}{2}$. In conclusion, $Q^{ns}(f, f)(v)$ is written by  
\begin{align*}
    Q^{ns}(f, f)(v) &= \int f(v_*)\int B(|u-v_*|, \cos a)(\delta(u'-v)f(u')-\delta(u-v)f(u)) \; d\sigma du dv_* \\&=\int f(v_*) \int \delta(u-v)f(u)S(|u-v_*|) \;dudv_*=f(v)\int f(v_*)S(|v-v_*|)  \;dv_* > 0.
\end{align*}
Moreover, $Q^{ns}(f,f)(v)$ is well-defined and positive.  
\end{proof} 

\begin{remark}
Note that if $\alpha = 1$ in \eqref{alpha} (elastic collision), then $A=a=\frac{w}{2}$ and 
\begin{equation} \label{S_elastic}
      S(|v-v_*|)=|\mathbb{S}^{d-2}||v-v_*|^{\gamma}\int_0^\pi b(\cos w)\sin^{d-2}w(\cos^{-d-\gamma}\frac{w}{2}-1) \;dw.
\end{equation} The above equation, \eqref{S_elastic} is same $S(|v-v_*|)$ in Lemma 1 in \cite{AD2000}.  \\
\end{remark}

\subsubsection{Lowerbound for inelastic model}
If $f(t,v) \in C^{\infty}$, then $f(t,v)$ is strictly positive on $(0,+\infty)\times \mathbb{R}^{d}$ in elastic model with noncutoff kernel. (See page 103 in \cite{V2002}.)
We extend this result to the inelastic model with noncutoff kernel. 
\begin{lemma} (Positivity for inelastic model) \label{pos_nc_I}
Suppose that $f(t,v)$ in \eqref{BT_I} is $C^{\infty}$ function. Then $f(t,v)>0$ on $(0,+\infty)\times \mathbb{R}^{d}$. 
\end{lemma}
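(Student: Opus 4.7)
The plan is to apply the minimum principle to the inelastic Boltzmann equation, exploiting the splitting $Q(f,f) = Q^s(f,f) + Q^{ns}(f,f)$ together with Lemma \ref{CC_I}. By the cancellation lemma, $Q^{ns}(f,f)(v) = f(v)G(v)$ with $G(v) \doteq \int f(v_*)S(|v-v_*|)\,dv_* \geq 0$, while $Q^s(f,f)(v) = p.v.\int K_f(u,v)(f(u)-f(v))\,du$ is a non-local ``fractional Laplacian type'' operator with non-negative kernel $K_f$. The equation \eqref{BT_I} then takes the form $\partial_t f - Q^s(f,f) = f\,G \geq 0$, and one expects the strong minimum principle for such fractional parabolic problems to force $f > 0$.

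I would first establish $f(t,v) \geq 0$ by a standard continuation argument using the same splitting: at any would-be first crossing of zero, the singular part contributes non-negatively and the drift $fG$ vanishes, so no crossing can occur. For strict positivity, assume by contradiction that $f(t_0,v_0) = 0$ with $t_0 > 0$ minimal among times where $f$ vanishes somewhere. Then $f(t,v) > 0$ for all $t < t_0$, and $v_0$ is a global minimum of $f(t_0,\cdot)$, so $\nabla_v f = 0$ and $\nabla^2_v f \geq 0$ at $(t_0,v_0)$, and $\partial_t f(t_0,v_0) \leq 0$. Evaluating \eqref{BT_I} at this point with $f(t_0,v_0)G(v_0) = 0$ yields
\[
0 \geq \partial_t f(t_0,v_0) = Q^s(f,f)(t_0,v_0) = \int K_f(u,v_0)\,f(u)\,du \geq 0,
\]
where the integral is absolutely convergent because $f(u) = O(|u-v_0|^2)$ near $v_0$ (Taylor expansion at the minimum) and $|u-v_0|^2 K_f(u,v_0)$ is locally integrable by Lemma \ref{Kf_est_I}. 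Consequently $K_f(u,v_0)f(u) = 0$ for a.e.\ $u$.

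The key geometric observation is that by \eqref{Kf_nc_I}, $K_f(u,v_0)$ is a weighted $(d-1)$-dimensional integral of $f$ over the affine hyperplane $E_{Pv_0}$ through $P = \frac{1}{\beta}v_0 - (\frac{1}{\beta}-1)u$ normal to $v_0 - u$; this plane is precisely the tangent plane at $P$ to the sphere centered at $v_0$ of radius $|P-v_0|$. As $u$ varies over $\mathbb{R}^d \setminus \{v_0\}$, $P$ sweeps $\mathbb{R}^d \setminus \{v_0\}$ bijectively, so $\{E_{Pv_0}\}_u$ is exactly the family of all affine hyperplanes of $\mathbb{R}^d$ not containing $v_0$. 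Since $M_0 > 0$ implies $U \doteq \{f>0\} \neq \emptyset$, for any $v_* \in U$ the hyperplanes through $v_*$ avoiding $v_0$ yield a non-empty open set of $u$'s with $K_f(u,v_0) > 0$, and on this open set $f$ must vanish. Iterating the argument at newly discovered zeros $v_0' \in \{f(t_0,\cdot) = 0\}$ (each of which is again a simultaneous space- and time-minimum by minimality of $t_0$, so the same analysis applies) propagates the zero set of $f(t_0,\cdot)$ throughout $\mathbb{R}^d$, contradicting $M_0 > 0$.

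The main obstacle is this iterative ``spreading of zeros'' step: one must verify that repeated application of the geometric condition $K_f(\cdot,v_0') > 0$ starting from an open zero set eventually covers all of $\mathbb{R}^d$, despite the inelastic shift $\beta \in (1/2,1)$ that places the planes $E_{Pv_0}$ off $v_0$ rather than through it. This is analogous to the strong maximum principle and unique continuation for fractional Laplacians, adapted to the tangent-plane structure of $K_f$. The remaining pieces --- non-negativity, the principal-value handling at $v_0$, and the pointwise minimum-principle step --- are routine given $f \in C^\infty$ and the kernel bounds of Lemma \ref{Kf_est_I}.
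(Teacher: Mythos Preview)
Your proposal is correct and follows the same minimum-principle strategy as the paper. The only real difference is in how $Q^s$ is represented at the zero: rather than the Carleman kernel $K_f(u,v)$, the paper recycles the change of variables from Lemma~\ref{CC_I} to write
\[
Q^s(f,f)(v)=\int f(v_*)\int_0^\pi\overline{S}(|v-v_*|,w)\bigl(f({'v})-f(v)\bigr)\,dw\,dv_*,
\]
with ${'v}$ the pre-collisional velocity, and then reads off $f(t_0,{'v})=0$ ``for all ${'v}\in\mathbb{R}^d$'' to reach the contradiction in one line. Your hyperplane-spreading iteration via $K_f$ is the honest version of exactly this step: in either representation one literally only obtains vanishing of $f$ on the range of ${'v}$ (respectively of $u$) arising from $v_*$ with $f(v_*)>0$, so the coverage issue you flag as the ``main obstacle'' is present in both approaches and simply suppressed in the paper. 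Two minor simplifications: you do not need a minimal $t_0$, since any zero $(t_0,v_0)$ with $t_0>0$ is already an interior global minimum of $f$ in $(t,v)$, giving $\partial_t f(t_0,v_0)=0$ directly; and once this is used, every zero of $f(t_0,\cdot)$ is automatically eligible for the same argument without invoking minimality again.
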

\begin{proof}
Recall \eqref{Qs_nc_I} and \eqref{Qns_nc_I}, 
\begin{align*}
\partial_t f(t,v) &=
Q(f,f)(v) = Q^{s}(f,f)(v) + Q^{ns}(f,f)(v)  \\
     &= \int B(|u-v_*|, \cos a)f(v_*)\delta(u'-v)(f(u)-f(u'))\; d\sigma dv_*du \\
     &\quad + \int B(|u-v_*|, \cos a)f(v_*)(\delta(u'-v)f(u')-\delta(u-v)f(u))\; d\sigma dv_* du.
\end{align*}
The second term in $Q^{s}(f,f)(v)$ is given by \eqref{after_sim_CC_I}, we rewrite
\begin{align}
	&\quad \int B(|u-v_*|, \cos a)\delta(u'-v)f(u')\; d\sigma du \notag \\
	&=|\mathbb{S}^{d-2}|\int \int_0^\pi |u-v_*|^{\gamma}b(\cos w)\sin^{d-2}w (\frac{\beta}{2}\cos a + (1-\frac{\beta}{2})\cos A)^{-d-\gamma}\delta(u-v)f(u) \; dwdu \notag \\
	&=f(v)\;|\mathbb{S}^{d-2}| \int_0^\pi |v-v_*|^{\gamma}b(\cos w)\sin^{d-2}w (\frac{\beta}{2}\cos a + (1-\frac{\beta}{2})\cos A)^{-d-\gamma}\; dw.  \label{second_Qs_pos_I}
\end{align}
Also, if we put $f(u')$ instead of $f(u)$ in \eqref{B_CC_I}, we change the first term in $Q^s(f,f)(v)$ as follows : 
\begin{align}
    &\quad \int B(|u-v_*|, \cos a)\delta(u'-v)f(u)\; d\sigma du \notag \\
     &=|\mathbb{S}^{d-2}|\int \int_0^\pi |u-v_*|^{\gamma}b(\cos w)\sin^{d-2}w (\frac{\beta}{2}\cos a + (1-\frac{\beta}{2})\cos A)^{-d-\gamma}\delta(u-v)f('u) \; dwdu \notag\\
   &=|\mathbb{S}^{d-2}|\int_0^\pi |v-v_*|^{\gamma}b(\cos w)\sin^{d-2}w (\frac{\beta}{2}\cos a + (1-\frac{\beta}{2})\cos A)^{-d-\gamma}f('v) \; dw, \label{first_Qs_pos_I}
\end{align}
where ${'v}$ is pre-velocity and defined 
\begin{align*} 
    v=\frac{'v+{'v_*}}{2}+\frac{1-\beta}{2}('v-{'v_*})+\frac{\beta}{2}\;|'v-{'v_*}|{\sigma}, \notag\\
     v_*=\frac{'v+{'v_*}}{2}-\frac{1-\beta}{2}('v-{'v_*})-\frac{\beta}{2}\;|'v-{'v_*}|{\sigma}.
\end{align*} Here, we consider particles with velocities $'v$ $'v_*$ before collision, $v,v_*$ after collision. Recall  \eqref{S_nc_I} in Lemma \ref{CC_I}, 
\begin{align} \label{Qns_pos_I}
    Q^{ns}(f, f)(v) =f(v)\int f(v_*)S(|v-v_*|)  \;dv_*. 
\end{align} Combining \eqref{second_Qs_pos_I}, \eqref{first_Qs_pos_I} and \eqref{Qns_pos_I}, we get 
\begin{align} 
    \partial_t f(t,v) &= Q^{s}(f,f)(v) + Q^{ns}(f,f)(v) \label{pos_nc_1_I} \\ 
&=\int f(v_*) \int \overline{S}(|v-v_*|,w)(f('v)-f(v))\; dwdv_* 
+ f(v)\int f(v_*)S(|v-v_*|)  \;dv_*, \label{pos_nc_2_I}
\end{align}
where $\overline{S}(|v-v_*|,w)=|\mathbb{S}^{d-2}||v-v_*|^{\gamma}b(\cos w)\sin^{d-2}w (\frac{\beta}{2}\cos a + (1-\frac{\beta}{2})\cos A)^{-d-\gamma}$. \\

\noindent Now, we can apply contradiction argument which is used in p.103 in \cite{V2002}. Assume that $f(t,v)$ is zero at $(t_0,v_0)$. Since $f(t_0,v) \geq 0$ for all $v$ and  $f(t_0,v_0)=0$, we get 
\begin{align*}
    \partial_t f(t,v) = 0, \quad Q^{ns}(f,f)(v) = f(v)\int f(v_*)S(|v-v_*|)  \;dv_* =0
\end{align*} at $(t,v)=(t_0,v_0).$ From \eqref{pos_nc_1_I}, we get
$Q^{s}(f,f)(v)=0$ at $(t,v)=(t_0,v_0).$ However $f(t_0,{'v})-f(t_0,v_0) \geq f(t_0,{'v}) \geq 0.$ So, we conclude $f(t_0,{'v})=f(t_0,v_0)=0$ for all ${'v} \in \mathbb{R}^d$ and it is contradiction. 
\end{proof} \vspace{3mm}

\begin{lemma} (region estimate for inelastic model) \label{region_est_I}
The velocities, $u, v_*$ and $u'$ are satisfied \eqref{u'_I}.
Suppose that $|v|\leq \sqrt{1+\beta^2}(1-\epsilon)R$ \;for\; $0 <\epsilon <1-\frac{1}{\sqrt{1+\beta^2}}$. Then, the following inequality holds 
\begin{align} \label{region_est_I_1}
    \int_{\mathbb{R}^d} \delta(u'-v)\int_{\mathbb{R}^d} \chi_{\{|u|\leq R\}} \int_{v_*\in E_{Pu'}} \chi_{\{|v_*|\leq R\}} \, dv_* \, dudu' \geq C R^{2d-1} \epsilon^d
\end{align}
for some constant, $C>0$. Here, $E_{Pu'}= \{x \in \mathbb{R}^d|\; (x-P)\bot u-u' ,\; P=\frac{1}{\beta}u'-(\frac{1}{\beta}-1)u\; \}.( \sqrt{5/
4} < \sqrt{1+\beta^2} < \sqrt{2} )$.
\end{lemma}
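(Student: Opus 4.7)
The plan is to resolve $\delta(u'-v)$ to fix $u'=v$, yielding
\[
I := \int_{|u|\leq R}\mathrm{Vol}_{d-1}\bigl(E_{Pv}\cap B_R(0)\bigr)\,du
\]
with $P=\tfrac{1}{\beta}v-\tfrac{1-\beta}{\beta}u$, and then to estimate this ``coincidence'' volume via a grazing-perturbation analysis in spherical coordinates. The intersection $E_{Pv}\cap B_R(0)$ is a $(d-1)$-disk of radius $(R^2-d_0^2)_+^{1/2}$, where $d_0=|P\cdot\hat\xi|$ is the distance from the origin to $E_{Pv}$ and $\hat\xi=(u-v)/|u-v|$. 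Substituting $\xi=u-v=r\omega$ one computes $P\cdot\hat\xi=v\cdot\omega-\tfrac{1-\beta}{\beta}r$, so
\[
I = c_{d-1}\!\int_{\mathbb{S}^{d-1}}\!d\omega\!\int_0^\infty\! r^{d-1}\chi_{\{|v+r\omega|\leq R\}}\Bigl(R^2-\bigl(v\cdot\omega-\tfrac{1-\beta}{\beta}r\bigr)^2\Bigr)_+^{(d-1)/2}\!dr,
\]
with $c_{d-1}$ the volume of the unit $(d-1)$-ball.

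Next I would parametrize $\omega=-\cos\alpha\,\hat v+\sin\alpha\,\hat w$ (with $\hat w\perp\hat v$; the case $v=0$ gives $I\sim R^{2d-1}$ trivially) and write $q:=|v|/R\leq(1-\epsilon)\sqrt{1+\beta^2}$. The ball constraint forces $r\in[r_-,r_+]$ with $r_\pm=R\bigl(q\cos\alpha\pm\sqrt{1-q^2\sin^2\alpha}\bigr)$ (requiring $\sin\alpha\leq1/q$), while non-degeneracy of the $v_*$-disk forces $r<r_{\max}:=\beta R(1-q\cos\alpha)/(1-\beta)$ (requiring $\cos\alpha<1/q$). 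These two angular conditions are simultaneously compatible exactly because $q<\sqrt{1+\beta^2}\leq\sqrt2$, which is the geometric role of the threshold $\sqrt{1+\beta^2}$. The critical angle is $\phi_2:=\sin^{-1}(1/q)$ (where the hyperplane grazes $\partial B_R$), and the identity $\sqrt{q^2-1}=\beta+O(\epsilon)$ (from $q^2=1+\beta^2-2\epsilon(1+\beta^2)+O(\epsilon^2)$) allows me to Taylor-expand $r_\pm,r_{\max},d_0$ around $\alpha=\phi_2$: on an $\alpha$-window of length $\sim\epsilon$ just below $\phi_2$ the admissible $r$-interval has length $\gtrsim R\sqrt\epsilon$ and $R-d_0\gtrsim R\sqrt\epsilon$, so the disk area is $\gtrsim R^{d-1}\epsilon^{(d-1)/4}$. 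Combining this with $r^{d-1}\sim R^{d-1}$ and the $(d-2)$-azimuthal factor of order $1$ yields $I\gtrsim R^{2d-1}\epsilon^{(d+5)/4}$, which dominates $R^{2d-1}\epsilon^d$ for every $d\geq 2$.

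The main technical obstacle is making these grazing-perturbation estimates uniform in $\beta\in(\tfrac12,1)$: the coefficient $(1-\beta)$ controlling the $\sqrt\eta$ term in $R-d_0$ degenerates as $\beta\to 1$, so in that regime one must instead exploit the $O(\epsilon)$ correction to $\sqrt{q^2-1}=\beta$ (namely $-(1+\beta^2)\epsilon/\beta$) to provide the lower bound on $R-d_0$. Reconciling these two limiting regimes under a single bound, and tracking the resulting (elementary but lengthy) constants carefully, is the bulk of the work.
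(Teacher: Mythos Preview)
Your approach is sound but takes a genuinely different route from the paper. The paper's proof is a direct geometric construction: for the extremal $|v|=\sqrt{1+\beta^2}(1-\epsilon)R$, it identifies (via the Figures~\ref{region_est_I_F_1}--\ref{region_est_I_F_2}) an explicit cap-like region of admissible $u$'s bounded by two planes $E_{AC},E_{CD}$ and the sphere, with volume $\sim(R\epsilon)(R\sqrt{\epsilon})^{d-1}$, and shows that for every such $u$ the $(d-1)$-disk $E_{Pv}\cap B_R(0)$ for $v_*$ contains a fixed slice of area $\sim(R\sqrt{\epsilon})^{d-1}$; multiplying gives exactly $R^{2d-1}\epsilon^{d}$. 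No perturbation analysis is performed; the argument is essentially a picture translated into volume estimates.

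Your analytic approach via spherical coordinates and a grazing analysis near the critical latitude $\phi_2=\sin^{-1}(1/q)$ is correct in outline and in fact yields the sharper exponent $\epsilon^{(d+5)/4}$, which dominates $\epsilon^{d}$ for $d\geq 2$. The trade-off is that your route is computationally heavier (Taylor expansions of $r_\pm,r_{\max},d_0$, careful interplay of the two radial constraints), whereas the paper's construction is elementary once one has the right picture. Two small remarks: (i) both arguments only treat the extremal case explicitly; you should note that for $|v|=\sqrt{1+\beta^2}(1-\epsilon')R$ with $\epsilon'\geq\epsilon$ your analysis gives $I\gtrsim R^{2d-1}(\epsilon')^{(d+5)/4}\geq R^{2d-1}\epsilon^{(d+5)/4}$ when $\epsilon'$ is small, and handle $\epsilon'$ bounded away from $0$ by a separate (trivial) compactness argument; (ii) your concern about the $\beta\to 1$ degeneracy is unnecessary for the lemma as stated, since $\beta$ is a fixed model parameter and the constant $C$ is allowed to depend on it.
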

\begin{proof} 
Before integrating \eqref{region_est_I_1},
we find the maximum possible value of $u'$ (= $v$) where $u$ and $v_*$ are given by \eqref{u'_I} and located in $B_R(0)$. Let the point $O$ be origin. The $u$-line and $v_*$-line are perpendicular and meet the circle at exactly one point in Figure \ref{region_est_I_F_1}. Next, let the point A and H be the points of contact between the circle and $u,v_*$-lines and $\overrightarrow{AB}\bot \overrightarrow{BH}$. The $Q$-line is parallel to $v_*$-line and passes through $u'.$  \vspace{3mm}

\begin{figure} [t]
\centering
\includegraphics[width=6cm]{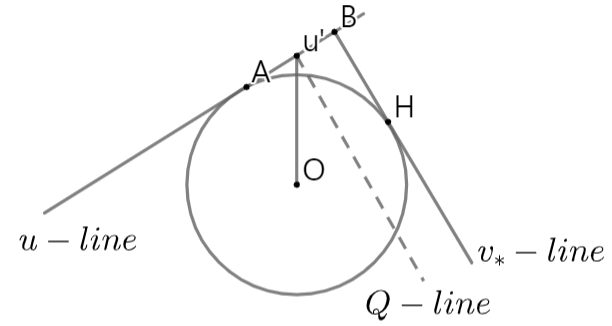} 
\caption{The maximum possible value of $u'(=v)$\label{region_est_I_F_1}} 
\end{figure} \noindent Now, let point A be $u$ and point $H$ be $v_*.$ Given $u$ and $v_*$, point Q is determined to $\beta v_* + (1-\beta)u.$ Note that $|Q-u|:|Q-v_*|=\beta : 1-\beta$ and $Q-u' \;\bot\; u-u'$ in \eqref{Q_I}. If $|\overrightarrow{OA}-u'| : |\overrightarrow{OB}-u'|=\beta : 1-\beta$, then the Q is an intersection point of the line $\overline{AH}$ and $Q$-line and the point B becomes P($=\frac{1}{\beta}v-(\frac{1}{\beta}-1)u$).
Since $|\overrightarrow{AB}|=R$, we can find $|\overrightarrow{OA}-u'|=\beta R$ and $|u'|=\sqrt{1+\beta^2}R$. So, the range of the possible value of $u'$ is between R and $\sqrt{1+\beta^2}R.$ \vspace{3mm}

\noindent In Figure \ref{region_est_I_F_2}, let $|u'| = \sqrt{1+\beta^2}(1-\epsilon)R$ and $|\overrightarrow{OW}|$ be $\sqrt{1+\beta^2}R.$
Note that $u'$ is moved down as much as $\sqrt{1+\beta^2}\epsilon$ in Figure \ref{region_est_I_F_1}. Suppose that $\overrightarrow{AW}$ and $\overrightarrow{OA'}-u'$ are parallel then \;$|\overrightarrow{AC}|=R-(1-\epsilon)R=\epsilon R$ and $|\overrightarrow{CD}|^{2}=R^2-(1-\epsilon)^2R^2 \backsimeq \epsilon R^2.$
We denote the plane $E_{AC}$ that contains $\overrightarrow{AC}$ with normal vector, $\overrightarrow{CD}.$ Similarly, we denote plane $E_{CD}$ that contains $\overrightarrow{CD}$ with normal vector $\overrightarrow{AC}$ and  $E_{EF}$ that contains $\overrightarrow{EF}$ with normal vector $\overrightarrow{AB}.$ We also denote $B_{R}(0)'=\{x \in B_R(0) | 0<\cos^{-1}(\frac{x \cdot u'}{|x||u'|})<\cos^{-1}(\frac{\overrightarrow{OA} \cdot u'}{|\overrightarrow{OA}||u'|})\}$. \vspace{3mm}

\begin{figure}[t]
\centering
\includegraphics[width=5cm]{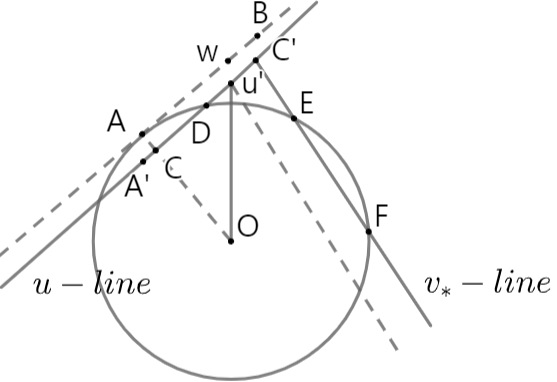}
\caption{The possible region of $u,v_*$ when $u'$ is fixed.\label{region_est_I_F_2}}
\end{figure} 

\noindent We prove that
if $u$ is a fixed point in the place surrounded by the planes $E_{AC}$ and $E_{CD}$ and $B_R(0)'$, then the possible region of $v_*$ is greater than $E_{EF} \cap B_R(0)$. First, when $u$ is fixed by the point $A'$, then $v_*$ is located in the plane $E_{EF} \cap B_R(0)$ since $|\overrightarrow{OA'}-u'|:|\overrightarrow{OC'}-u'|=\beta : 1-\beta$. If $u$ moves toward $u'$ on line $\overline{A'D}$, then $v_*$-line of the set of the possible $v_*$ moves toward $O$ in parallel to the $\overrightarrow{AB}.$ Also, we consider the clockwise  rotation of the $u,v_*$-lines when $u'$ is fixed as a center in Figure \ref{region_est_I_F_2}. 
For some $x$ on the $\overline{A'D}$, \;if the angle $\cos^{-1}(\frac{(u'-x) \cdot u'}{|u'-x||u'|})$ increases while maintaining $|x-u'|$ value, then the $v_*$-line rotates close to O. Therefore, both cases, the possible region of $v_*$ increases. \vspace{3mm}

\noindent The volume of the place surrounded by the planes $E_{AC}$ and $E_{CD}$ and $B_R(0)'$ is proportional to $|\overrightarrow{AC}| \times |\overrightarrow{CD}|^{d-1}$($\varpropto$ $R\epsilon  \times (R\sqrt{\epsilon})^{d-1}$). The volume of the $E_{EF} \cap B_R(0)$ is proportional to $\overline{EF}^{d-1} (\varpropto (R\sqrt{\epsilon})^{d-1})$. Lastly, the following inequality holds, 
\begin{align*}
    \int_{\mathbb{R}^d}\int_{\mathbb{R}^d} \chi_{\{|u|\leq R\}}\;\delta(u'-v)\int_{v_*\in E_{Pu'}} \chi_{\{|v_*|\leq R\}} \, dv_* \, dudu' \;\geq \; C (R\epsilon)(R\sqrt{\epsilon})^{d-1}(R\sqrt{\epsilon})^{d-1}\; \geq C R^{2d-1} \epsilon^{d}
\end{align*} for some constant $C$. 
\end{proof} \vspace{3mm}

The spreading lemma in elastic model has been proved in Lemma 3.4 in \cite{IM2020} and we extend this result to inelastic model.

\begin{lemma} (Spreading lemma for inelastic model) \label{spreading_I} Consider $T_0$ $\in (0,1).$ Suppose that $\gamma<0$ and $\gamma+2s \in [0,2]$ (moderately soft potentials) and $f(t,v)$ is the solution in \eqref{BT_I}. 
If $f(t,v) \geq l$ on $ t \in [0,T_0], \; v\in B_R(0)$ \;for some $l>0, R>1$ then there exists constant $C>0$ depending on $d, s, M_0, E_0$ such that
\begin{align*}
    f(t,v) \geq C\; min(t, R^{-\gamma}\epsilon^{2s})\; \epsilon^q R^{d+r}l^2, \quad \forall t \in [0,T_0], \;\;
    \forall v\in B_{\sqrt{1+\beta^2}(1-\epsilon)R}(0)
\end{align*}
for any $\epsilon \in (0, 1-\frac{1}{\sqrt{1+\beta^2}})$ that satisfied with $\epsilon^{q}R^{d+r}l < 1/2$ and $R\epsilon < 1$, where $q=d+2(\gamma+2s+1).$
\end{lemma}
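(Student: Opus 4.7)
The plan is to adapt the elastic spreading argument of \cite{IM2020} to the inelastic setting, using the tools already assembled: the positivity $Q^{ns}(f,f)\geq 0$ from Lemma \ref{CC_I}, the kernel estimates of Lemma \ref{Kf_est_I}, the Carleman representation \eqref{Qs_Kf_nc_I}, and the geometric measure bound of Lemma \ref{region_est_I}. I fix $v$ with $|v|\leq \sqrt{1+\beta^{2}}(1-\epsilon)R$ and, since $Q^{ns}(f,f)\geq 0$, reduce the equation to
\[
\partial_t f(t,v) \;\geq\; Q^{s}(f,f)(t,v) \;=\; \mathrm{p.v.}\int K_f(u,v)\bigl(f(u)-f(v)\bigr)\,du.
\]
The goal is a pointwise differential inequality of the form $\partial_t f \geq \mathcal{G}(v)-\mathcal{L}(v)f$ with $\mathcal{G}\gtrsim \epsilon^{q}R^{d+r}l^{2}$ and $\mathcal{L}\lesssim (1+|v|)^{\gamma+2s}r^{-2s}$, from which the conclusion will follow by elementary comparison.

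To produce it, I would split the $u$-integral at a radius $r>0$ to be chosen. On $|u-v|<r$ the kernel is singular, but since $f\in C^{\infty}$ a second-order Taylor expansion of $f$ around $v$, combined with the symmetric majorant $\overline{K}_f$ and inequality \eqref{K_I_inequal}, bounds this contribution by $C\,r^{2-2s}(1+|v|)^{\gamma+2s}\|\nabla^{2}f\|_{L^{\infty}}$ via \eqref{Kf_est_1_I}. On $|u-v|\geq r$ the kernel is integrable, so I rewrite the remainder as $\mathcal{G}(v)-\mathcal{L}(v)f(v)$, and \eqref{Kf_est_2_I} directly gives $\mathcal{L}(v)\lesssim (1+|v|)^{\gamma+2s}r^{-2s}$.

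The heart of the argument is the lower bound on $\mathcal{G}(v)=\int_{|u-v|\geq r}K_f(u,v)f(u)\,du$. I would insert the hypothesis $f\geq l\,\mathbf{1}_{B_R(0)}$ twice: once for the explicit factor $f(u)$ and once, inside the definition \eqref{Kf_nc_I} of $K_f$, for the inner factor $f(v_\ast)$. Restricting the $u$- and $v_\ast$-integrals to the balls of radius $R$ turns $\mathcal{G}(v)$ into $l^{2}$ times a purely geometric integral of $|v-u|^{-d-2s}\,|v_\ast-P|^{\gamma+2s+1}\,\tilde{b}(\cos\theta)$ over the asymmetric configuration analysed in Lemma \ref{region_est_I}. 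On that configuration (cf.\ Figure \ref{region_est_I_F_2}) the distances $|v-u|$ and $|v_\ast-P|$ are comparable to universal multiples of $R$ and the scattering angle stays uniformly bounded away from $0$, so each weight contributes a definite polynomial factor; combined with the plain volume bound $R^{2d-1}\epsilon^{d}$ of Lemma \ref{region_est_I}, this assembles to $\mathcal{G}(v)\gtrsim \epsilon^{q}R^{d+r}l^{2}$ with $q=d+2(\gamma+2s+1)$, as required.

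Having assembled $\partial_t f(v)\geq \mathcal{G}(v)-\mathcal{L}(v)f(v)-C\,r^{2-2s}(1+|v|)^{\gamma+2s}\|\nabla^{2}f\|_{L^{\infty}}$, I would choose $r=R\epsilon^{\kappa}$ with a suitable $\kappa$ (exploiting $R\epsilon<1$) so that the singular correction is absorbed into one half of $\mathcal{G}$. The hypothesis $\epsilon^{q}R^{d+r}l<1/2$ then keeps $\mathcal{L}f$ bounded by half of $\mathcal{G}$ as long as $f(t,v)\leq \mathcal{G}/\mathcal{L}$, so integrating the resulting inequality $\partial_t f\geq \tfrac{1}{2}\mathcal{G}$ up to the saturation time $\min\{t,1/\mathcal{L}\}\sim \min\{t,R^{-\gamma}\epsilon^{2s}\}$ delivers the claim. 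The principal obstacle I foresee is the precise bookkeeping of the weights in $\mathcal{G}(v)$: unlike the elastic case, the point $P=\beta^{-1}v-(\beta^{-1}-1)u$ does not coincide with $v$, so recovering exactly the exponent $q=d+2(\gamma+2s+1)$ requires a careful scaling analysis of the asymmetric region of Lemma \ref{region_est_I} together with a uniform lower bound on $\tilde{b}(\cos\theta)$ across that region.
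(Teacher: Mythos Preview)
Your proposal has a genuine gap at the step where you control the singular part of $Q^{s}(f,f)(v)$ on $|u-v|<r$. You invoke a second-order Taylor expansion of $f$ and bound the contribution by $C\,r^{2-2s}(1+|v|)^{\gamma+2s}\|\nabla^{2}f\|_{L^{\infty}}$, planning to absorb this into $\tfrac{1}{2}\mathcal{G}$ by choosing $r$. But $\|\nabla^{2}f\|_{L^{\infty}}$ is not controlled by the data $d,s,M_{0},E_{0}$; the $C^{\infty}$ hypothesis gives smoothness but no quantitative bound on derivatives. Hence no choice of $r$ depending only on $R,\epsilon$ and the stated data can make this absorption work, and the constant $C$ in the conclusion would end up depending on $\|\nabla^{2}f\|_{L^{\infty}}$, contrary to the statement.

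The paper avoids this entirely by a barrier argument. It builds an explicit cutoff $\phi_{R,\epsilon}$ and a time profile $\tilde{l}(t)$ solving $\tilde{l}'=\alpha\epsilon^{q}R^{d+\gamma}l^{2}-cR^{\gamma}\epsilon^{-2s}\tilde{l}$, and looks at the first touching point $(t_{0},v_{0})$ where $f=\tilde{l}\phi_{R,\epsilon}$. At that point one writes $Q^{s}(f,f)=Q^{s}(f,f-\tilde{l}\phi_{R,\epsilon})+\tilde{l}\,Q^{s}(f,\phi_{R,\epsilon})$. The first term is an integral of $K_{f}(u,v_{0})\bigl(f(u)-\tilde{l}\phi_{R,\epsilon}(u)\bigr)$, which is pointwise non-negative by the touching condition, so the singularity of $K_{f}$ is harmless and one may restrict $u,v_{\ast}$ to $B_{R}(0)$ to extract the gain $\gtrsim\epsilon^{q}R^{d+\gamma}l^{2}$ via Lemma \ref{region_est_I}. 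The second term is controlled by Lemma \ref{Q_est_I}, which uses $\|\nabla^{2}\phi_{R,\epsilon}\|_{L^{\infty}}\sim (R\epsilon)^{-2}$---an explicit, computable quantity---yielding $|Q^{s}(f,\phi_{R,\epsilon})|\lesssim R^{\gamma}\epsilon^{-2s}$. The point is that the barrier transfers the second-derivative burden from the unknown $f$ onto the explicit $\phi_{R,\epsilon}$; this is precisely the idea your direct ODE comparison is missing.
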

\begin{proof} 
We will obtain this lemma using Lemma \ref{Q_est_I}, \ref{CC_I}, and \ref{region_est_I}. \vspace{3mm}

\noindent We consider the smooth function  $f(u)$ that $f(0)=1$ and $f=0$ in $|u|>1$ and for which the following is true, 
\begin{align} \label{f_condition}
    \frac{\sqrt{2}}{2}\|\nabla f\|_{L^\infty} \leq  \|\nabla^2 f \|_{L^\infty}, \quad \frac{1}{2} < \|\nabla^2 f \|_{L^\infty}.
\end{align} (ex. $f(x)= \exp({1-\frac{1}{1-|x|^2}})$).
From $f(u),$ we can derive function $\phi_{R,\epsilon}(u),$
\begin{align} \label{phi_spreading_nc_I}
    \phi_{R,\epsilon}(u) =
     \left\{ \begin{array}{rcl}1 & \mbox{for} \; |u|\leq\sqrt{1+\beta^2}R(1-\epsilon)\\
0 & \mbox{\quad for} \;|u|> \sqrt{1+\beta^2}R(1-\epsilon/2),
\end{array}\right.
\end{align} such that $\| \nabla\phi_{R,\epsilon}\|_{L^\infty}=(\frac{\sqrt{1+\beta^2}R\epsilon}{2})^{-1}\|\nabla f\|_{L^\infty}$ and $\|\nabla^2 \phi_{R,\epsilon}\|_{L^\infty}=(\frac{\sqrt{1+\beta^2}R\epsilon}{2})^{-2}\|\nabla^2 f \|_{L^\infty}.$ \\ By \eqref{f_condition} and $R\epsilon <1$, and $\beta \in (\frac{1}{2},1)$, we get
\begin{align} \label{phi condition}
\begin{split}
    &\| \nabla\phi_{R,\epsilon}\|_{L^\infty} \leq \|\nabla^2 \phi_{R,\epsilon}\|_{L^\infty}, \\
    &\|\phi_{R,\epsilon}\|_{L^{\infty}}/\text{max}\{\|\nabla^2 \phi_{R,\epsilon}\|_{L^\infty}, \|\nabla \phi_{R,\epsilon}\|_{L^\infty} \}< 1. 
\end{split}
\end{align} So, we can apply \eqref{Q_est_1_I} and obtain
\begin{align}
  |Q^{s}(f,\phi_{R,\epsilon})(v)| 
    &\lesssim
  \|\phi_{R,\epsilon}\|_{L^{\infty}}^{1-s}(\text{max}\{\|\nabla^2 \phi_{R,\epsilon}\|_{L^\infty}, \|\nabla \phi_{R,\epsilon}\|_{L^\infty} \})^{s}(1+|v|)^{\gamma+2s}  \notag \\
    &= \|\phi_{R,\epsilon}\|_{L^{\infty}}^{1-s}\|\nabla^2 \phi_{R,\epsilon}\|_{L^\infty}^{s}(1+|v|)^{\gamma+2s}
  \lesssim cR^\gamma \epsilon^{-2s} \label{Qs_est_spreading_I}
\end{align}for some constant $c>0$. \vspace{4mm}

\noindent We want to prove $ f(t,v) \geq \tilde{l}(t)\phi_{R,\epsilon}(v)$ such that
\begin{align} \label{l_spreading_I}
    \tilde{l}(t)=\alpha \; \epsilon^{q}R^{d+\gamma}l^2\frac{1-e^{-c R^\gamma\epsilon^{-2s}t}}{c R^{\gamma}\epsilon^{-2s}}
\end{align} for some $\alpha >0$. If the inequality was not true, there exists $(t_0,v_0)\in [0,T_0]\times \mbox{supp}\;\phi_{R,\epsilon}$ such that $f(t_0,v_0)=\tilde{l}(t_0)\phi_{R,\epsilon}(v_0)$ and $\partial_t(f-\tilde{l}(t)\phi_{R,\epsilon}(v))\leq 0$ at $(t_0,v_0).$ 
Applying $Q^{ns}(f,f)>0$ and \eqref{Qs_est_spreading_I},
\begin{align}
    \tilde{l}'(t_0) &\geq \tilde{l}'(t_0)\phi_{R,\epsilon}(v_0) \geq \partial_t f(t_0,v_0) \geq Q^{s}(f,f)(t_0,v_0)  \notag \\
     &=Q^{s}(f,f-\tilde{l}\phi_{R,\epsilon})(t_0,v_0)+\tilde{l}(t_0)Q^{s}(f,\phi_{R,\epsilon})(t_0,v_0) \notag \\
     &\geq \int \delta(u'-v_0)\int K_f(u,u')(f(u)-\tilde{l}(t_0)\phi_{R,\epsilon}(u))\; dudu' - cR^{\gamma}\epsilon^{-2s}\tilde{l}(t_0). \label{app_Q_spreading_I}
\end{align}\vspace{2mm}

\noindent Using $\tilde{l}'(t_0)=\alpha \epsilon^q R^{d+r}l^2-cR^{\gamma}\epsilon^{-2s}\tilde{l}(t_0),$ we obtain 
\begin{align}
    \alpha \epsilon^q R^{d+r}l^2  \gtrsim \int \delta(u'-v_0)
    \int \frac{f(u)-\tilde{l}(t_0)\phi_{R,\epsilon}(u)}{|u-u'|^{d+2s}}\int_{v_*\in E_{Pu'}} |v_*+(\frac{1}{\beta}-1)u-\frac{1}{\beta}u'|^{\gamma+2s+1}f(v_*)\; dv_*dudu'.  \label{est_spreading_I} 
\end{align}
Restricting $u$ and $v_*$ to $B_R(0)$, we get $f(u)-\tilde{l}(t_0)\phi(u)\geq l/2$ under the assumption that $\epsilon^{q}R^{d+r}l < 1/2$, and  $|u'-u|\leq 3R$. Then, we obtain \eqref{region_est_spreading_I}. Next we have that $|v_*-(1-\frac{1}{\beta})u-\frac{1}{\beta}u'| \gtrsim \epsilon^2R$ in \eqref{region_est_spreading_I} when $u'$ and $u$ are fixed. Because the form of the possible region of $v_*$ is not changed(i.e, $\varpropto (R\sqrt{\epsilon})^{d-1})$ in Lemma \ref{region_est_I_F_1}, we can apply \eqref{region_est_I_1} to \eqref{region_est_spreading_I}. Thus,
\begin{align}
     &\eqref{est_spreading_I} \notag \\
     &\gtrsim  R^{-d-2s}l^2 \int \delta(u'-v_0)\int \chi_{\{|u|\leq R\}}\;\int_{v_*\in E_{Pu'}} |v_*+(\frac{1}{\beta}-1)u-\frac{1}{\beta}u'|^{\gamma+2s+1} \chi_{\{|v_*|\leq R\}} \, dv_* \, dudu'  \label{region_est_spreading_I}\\
    &\geq C'\epsilon^q R^{d+\gamma}l^2 \notag
\end{align} on $v_0<\sqrt{1+\beta^2}R(1-\epsilon)$ for some constant $C'>0.$ Here, we can choose the constant $C'$ which is independent of $\alpha$ in \eqref{l_spreading_I}. This is contradiction.
\end{proof} \vspace{3mm}

\begin{proof} [Proof of Theorem \ref{1.1}]
For any $T_0\in (0,1),$ we define
\begin{align*}
     &T_n = (1-\frac{1}{2^{n}})T_0, \\
     &\epsilon_n=\frac{1}{2^{n+1}}, \\
      &R_{n+1}=\sqrt{1+\beta^2}(1-\epsilon_n)R_n, \quad R_0=1.
\end{align*}
In Lemma \ref{pos_nc_I}, we proved $f$ is strictly positive under the assumption that $f \in C^{\infty}.$ Because the interval $[T_1,T_0]$ and $\overline{B}_{R_0}(0)$ are compact, there exists some constant $l_0\in (0,1)$ such that $f(t,v) \geq l_0$ when $t \in [T_1,T_0],\; v \in B_{R_0}(0).$  
For $v \in B_{R_n}(0)$, we assume $f(t,v) \geq l_n$ when $t \in [T_{n+1},T_0]$.  We check $\epsilon_n^q R_n^{d+\gamma}l_n < 1/2$ and $R_n\epsilon_n <1.$ Then, we get $f(t,v) \geq l_{n+1}$ when $t \in [T_{n+2},T_0], \; v \in B_{R_{n+1}}(0) $ and $l_{n+1} \geq K l_n^{2}$ for some constant $K$ depending on $d, s, M_0, E_0, t$ by Lemma \ref{spreading_I}. Now, using iteration, we obtain 
\Be \label{iteration}
	l_n \geq Kl_{n-1}^{2} \cdots \geq Kl_{0}^{2^{n}}. 
\Ee
 \noindent We observe that
\begin{align*}
R_{n}=\sqrt{1+\beta^2}(1-\epsilon_n)R_{n-1}=(\sqrt{1+\beta^2})^{n}\Pi_{j=1}^{n}(1-\frac{1}{2^j}).
\end{align*} 
Then we get
\begin{equation*}
    C_2(\sqrt{1+\beta^2})^{n} \leq R_n \leq C_1(\sqrt{1+\beta^2})^{n},
\end{equation*}
\begin{equation*}
    C_2 \cdot2^{\frac{1}{k}} \leq R_n \leq C_1 \cdot2^{\frac{1}{k}}
\end{equation*}
for some constants, $C_1, C_2>0.$ Here, $k=\log 2/(n\log\sqrt{1+\beta^2}).$ If we write $p=\log 2/\log\sqrt{1+\beta^2}$, $l_0^{2^n}$ on the RHS of \eqref{iteration} can be estimated by 
\Be \label{l v p}
    l_0^{2^n} \geq l_0^{{(\frac{R_n}{C_2})}^{kn}} 
    \geq e^{-C R_n^{p}} \geq e^{-C |v|^{p}},
\Ee
where $C=-({\frac{1}{C_2}})^p \log l_0.$ Therefore, combining \eqref{iteration} and \eqref{l v p}, we get $f(T_0,v) \geq K e^{-C |v|^{p}}$ on $v \in \mathbb{R}^d,$ where $K$ and $C$ depend on $d,s,M_0,E_0$ and $T_0$. 
Extending this result, there exist functions $a(t), b(t)>0$ such that $f(t,v) \geq a(t)e^{-b(t)|v|^{p}}$ for all positive $t.$ 
\end{proof} \vspace{2mm}

\section{The Mixture Boltzmann equation}
\subsection{Noncutoff collision kernel}
The Mixture collision operator $Q_{ji}(f_j,f_i)(t,v)$ is written by  
\begin{align} \label{Q_nc_M'}
\begin{split}
     Q_{ji}(f_j, f_i) (t,v) &= \int B_{ji}(|v-v_*|, \cos\theta)(f_j(v_*')f_i(v')-f_j(v_*)f_i(v)) \; d\sigma dv_*, \\ \quad &\text{where}\;\cos\theta = \langle\frac{v-v_*}{|v-v_*|}, \sigma\rangle, \quad \theta \in [0, \pi],
\end{split}
\end{align}
and $v'$ and $v_*'$ are given by \eqref{v'_M}. 
We define the $B_{ji}(|v-v_*|,\cos\theta)$ as
\begin{equation} \label{B_nc_M'}
    B_{ji}(|v-v_*|,\cos\theta)=|v-v_*|^{\gamma}b_{ji}(\cos\theta), \;\; b_{ji}(\cos\theta) \approx_{\theta \sim 0} |\theta|^{-(d-1)-2s} \tilde{b}_{ji}(\cos\theta), \;\;
    \int \tilde{b}_{ji}(\cos\theta) \;d\sigma < +\infty.
\end{equation}
The $\tilde{b}_{ji}(\cos\theta)$ is smooth on $[0,\pi]$ and positive on $[0,\pi).$ Because taking minus on ${\sigma}$ does not reverse $v'$ and $v_*'$($v' \nleftrightarrow v_*'$ when ${\sigma} \leftrightarrow -{\sigma}$), we can not reduce the range of $\theta$, $[0, \pi]$ to $[0, \frac{\pi}{2}]$. The collision is time-reversible, i.e, $B_{ji}(|v-v_*|,\cos\theta)=B_{ji}(|v'-v_*'|,\cos\theta)$. \vspace{3mm}

\noindent The weak form of the collision operator is written by 
\begin{align} \label{weak_nc_M}
    \int Q_{ji}(f_j,f_i)\phi(v) \;dv = \int B_{ji}(|v-v_*|,\cos\theta)f_j(v_*)f_i(v)(\phi(v')-\phi(v)) \;d\sigma dv_* dv.
\end{align} 
\noindent 
When $j=i$, $Q_{ii}(f_i,f_i)$ is the collision operator between identical particles under elastic collision. The $Q_{ii}(f_i,f_i)$ can be decomposed into singular and nonsingular parts,
\[
	Q_{ii}(f_i,f_i) = Q_{ii}^{s}(f_i,f_i)(t,v) + Q_{ii}^{ns}(f_i,f_i)(t,v).
\]
The estimates for $Q_{ii}^{s}(f_i,\phi)(t,v)$ is given in Lemma 2.3, \cite{IM2020} and the cancellation lemma of the $Q_{ii}^{ns}(f_i,f_i)(t,v)$ is in \cite{V2002}. Since we are studying multi-species models, we extend the results into $Q_{ji}(f_j,f_i)(t,v)$ for $i \neq j$. \\ 

\noindent First, we assume that $m_i<m_j.$ We decompose $Q_{ji}(f_j,f_i)(t,v)$ into singular and nonsingular parts,
\begin{align} 
& \quad\;\; Q_{ji}(f_j,f_i)(t,v) \notag \\  &= \int B_{ji}(|v-v_*|,\cos\theta)(f_j(v_*')f_i(v')-f_j(v_*)f_i(v)) \; d\sigma dv_* \notag \\
 &=  \int B_{ji}(|v-v_*|,\cos\theta)f_j(v_*')(f_i(v')-f_i(v))\;d\sigma dv_* +  f_i(v)\int B_{ji}(|v-v_*|,\cos\theta)(f_j(v_*')-f_j(v_*)) \; d\sigma dv_* \notag \\
 &\doteq Q^{s}_{ji}(f_j,f_i)(t,v) + Q^{ns}_{ji}(f_j,f_i)(t,v).  \label{split_Q_M_1}
\end{align}
We define 
\begin{align}
    Q_{ji}^{s}(f_j,f_i)(t,v) &= \int B_{ji}(|v-v_*|,\cos\theta)f_j(v_*')(f_i(v')-f_i(v))\;d\sigma dv_* \label{Qs_M_1}, \\
    Q^{ns}_{ji}(f_j,f_i)(t,v) &= f_i(v)\int B_{ji}(|v-v_*|,\cos\theta)(f_j(v_*')-f_j(v_*)) \; d\sigma dv_*\label{Qns_M_1}.
\end{align}
We change $Q_{ji}^{s}(f_j,f_i)(t,v)$ into the Carleman alternative representation form. We perform change of variables $(\sigma, v_*) \rightarrow (v',v_*')$ and replace $\delta(|\sigma|^2-1)$ by \eqref{delta_M_1}. We define $\tilde{b}_{ji}(\cos\theta)$ as
\begin{align*}
    2^{d-1}b_{ji}(\cos\theta)&= (\sin\frac{\theta}{2})^{-(d-1)-2s}(\cos\frac{\theta}{2})^{\gamma+2s+1}\tilde{b}_{ji}(\cos\theta) \\ &= (\frac{|v'-v|}{\frac{2m_j}{m_i+m_j}|v-v_*|})^{-(d-1)-2s}(\frac{|v-\frac{2m_j}{m_i+m_j}v_*'+\frac{m_j-m_i}{m_i+m_j}v'|}{\frac{2m_j}{m_i+m_j}|v-v_*|})^{\gamma+2s+1} \tilde{b}_{ji}(\cos\theta)
\end{align*} under the assumption that $b_{ji}(\cos\theta)\approx|\theta|^{-(d-1)-2s}\tilde{b}_{ji}(\cos\theta)$. Here, $\sin {\frac{\theta}{2}}$, and $\cos {\frac{\theta}{2}}$ are in \eqref{cos_M_1}, \eqref{sin_M_1}. Thus,
\begin{align} 
    &\quad \int B_{ji}(|v-v_*|,\cos\theta)f_j(v_*')(f_i(v')-f_i(v))\;d\sigma dv_* \label{Qs_start_M_1} \\
     &=\int (\frac{m_j}{m_i+m_j}|v-v_*|)^{-d}|v-v_*|^{\gamma}b_{ji}(\cos\theta)f_j(v_*')(f_i(v')-f_i(v))\delta(|\sigma|^2-1) \; dv'dv_*' \notag \\ 
      &= (\frac{m_j}{m_i+m_j})^{-d+1}\int |v'-v|^{-1}(f_i(v')-f_i(v)) \int_{v_*'\in E_{Pv'}} |v-v_*|^{-d+\gamma+2}b_{ji}(\cos\theta)f_j(v_*')\; dv_*'dv' \notag \\ 
       &=(\frac{2m_j}{m_i+m_j})^{2s}\int \frac{f_i(v')-f_i(v)}{|v'-v|^{d+2s}}\int_{v_*'\in E_{Pv'}}  \tilde{b}_{ji} (\cos\theta) |\frac{m_i+m_j}{2m_j}v+\frac{m_j-m_i}{2m_j}v'-v_*'|^{\gamma+2s+1}f_j(v_*')\; dv_*'dv',\notag
\end{align}
where $ E_{Pv'} = \{x \in \mathbb{R}^d|\; (x-P)\;\bot \;v-v' , \quad P=\frac{m_i+m_j}{2m_j}v+\frac{m_j-m_i}{2m_j}v'\; \}.$ We can deduce easily, 
\begin{align} \label{Qs_Kf_j_M_1} 
Q^{s}_{ji}(f_j,f_i)(t,v) &\doteq p.v \int K_{f_j}(v,v')(f_i(v')-f_i(v)) \;dv',
\end{align}
where 
\begin{align}  \label{Kf_j_M_1} 
\begin{split}
    K_{f_j}(v,v') &= (\frac{2m_j}{m_i+m_j})^{2s}\frac{1}{|v'-v|^{d+2s}}\int_{v_*'\in E_{Pv'}} \tilde{b}_{ji}(\cos\theta) |\frac{m_i+m_j}{2m_j}v+\frac{m_j-m_i}{2m_j}v'-v_*'|^{\gamma+2s+1}f_j(v_*')\; dv_*'\\
    &= (\frac{2m_j}{m_i+m_j})^{2s}\frac{1}{|v'-v|^{d+2s}}\int_{v_*'\in E_{Pv'}} \tilde{b}_{ji}(\cos\theta) |P-v_*'|^{\gamma+2s+1}f_j(v_*')\; dv_*'. 
\end{split}
\end{align} The notation p.v in \eqref{Qs_Kf_j_M_1} is the Cauchy principal value around the point $v'$ when $s\in[\frac{1}{2},1).$
The $K_{f_j}(v,v')$ is not symmetric of $K_{f_j}(v,v+w)\neq K_{f_j}(v,v-w)$. We define $\overline{K}_{f_j}(v,v')$ which has symmetric property,
\begin{align} \label{K'f_j_M_1}
    \overline{K}_{f_j}(v,v') = (\frac{2m_j}{m_i+m_j})^{2s}&\frac{1}{|v'-v|^{d+2s}}\int_{v_*'\in E_{Pv'}} \tilde{b}_{ji}(\cos\theta) |v-v_*'|^{\gamma+2s+1}f_j(v_*')\; dv_*'.
\end{align} The following inequality holds
\begin{align} \label{K_M_1_inequ}
    K_{f_j}(v,v') \leq \overline{K}_{f_j}(v,v'),
\end{align} since $P$ is the closest point to $v_*$ in extension line of $v'$ and $v$. (See Figure \ref{Pre_M_F_1}.) \vspace{3mm}

\noindent Next, we assume that $m_i>m_j$. 
We split $Q_{ji}(f_j,f_i)(v)$, using the weak form \eqref{weak_nc_M},
\begin{align}
   Q_{ji}(f_j,f_i)(v)
   &=\int Q_{ji}(f_j,f_i)(u)\delta(u-v) \;du \notag \\
     &= \int B_{ji}(|u-v_*|, \cos\theta)f_i(u)f_j(v_*) (\delta(u'-v)-\delta(u-v)) \;d\sigma dv_* du \notag\\
    &\doteq Q^{s}_{ji}(f_j,f_i)(v) + Q^{ns}_{ji}(f_j,f_i)(v).\label{Qs_Qns_M_2}
\end{align}
We define
\begin{align} 
    Q^{s}_{ji}(f_j,f_i)(v) := \int B_{ji}(|u-v_*|, \cos\theta)f_j(v_*)(\delta(u'-v)f_i(u)-\delta(u'-v)f_i(u'))\; d\sigma dv_*du, \label{Qs_M_2} \\
    Q^{ns}_{ji}(f_j,f_i)(v) := \int B_{ji}(|u-v_*|, \cos\theta)f_j(v_*)(\delta(u'-v)f_i(u')-\delta(u-v)f_i(u))\; d\sigma dv_* du.\label{Qns_M_2}
\end{align}
We change the $Q_{ji}^{s}(f_j,f_i)(t,v)$ into the Carleman alternative representation form. Using similar argument to inelastic model (See page 9.), we get
\begin{align} \label{Qs_ji_Cal_M_2}
\begin{split}
     Q_{ji}^{s}(f_j,f_i)(v) &= p.v \int K'_{f_j}(u,u')\delta(u'-v)(f_i(u)-f_i(u')) \;dudu'  \\ 
     &= p.v \int K'_{f_j}(u,v)(f_i(u)-f_i(v))\;du,
\end{split}
\end{align}
where
\begin{align} 
    K'_{f_j}(u,u')&=(\frac{2m_j}{m_i+m_j})^{2s}\frac{1}{|u'-u|^{d+2s}} \int_{v_* \in E_{Ru'}}\tilde{b}_{ji}(\cos\theta)|\frac{m_j+m_i}{2m_j}u'-\frac{m_i-m_j}{2m_j}u-v_*|^{\gamma+2s+1} f_j(v_*) \;dv_* \notag \\
    &=(\frac{2m_j}{m_i+m_j})^{2s}\frac{1}{|u'-u|^{d+2s}} \int_{v_* \in E_{Ru'}}\tilde{b}_{ji}(\cos\theta)|R-v_*|^{\gamma+2s+1} f_j(v_*) \;dv_*. \label{Kf_M_2}
\end{align} Here, $E_{Ru'}=\{x|\; (x-R)\;\bot u-u', \quad R=\frac{m_i+m_j}{2m_j}u'-\frac{m_i-m_j}{2m_j}u\;\}.$
The notation p.v in \eqref{Qs_ji_Cal_M_2} is the Cauchy principal value around the point $u'$ when $s\in[\frac{1}{2},1).$ We define $\overline{K}'_{f_j}(u,u')$ which has symmetric property $\overline{K}'_{f_j}(u'+w,u')= \overline{K}'_{f_j}(u'-w,u')$ as
\begin{align} \label{K'f_j_M_2}
    \overline{K}'_{f_j}(u,u')=(\frac{2m_j}{m_i+m_j})^{2s}&\frac{1}{|u'-u|^{d+2s}} \int_{v_* \in E_{Ru'}}\tilde{b}_{ji}(\cos\theta)|v_*-u'|^{\gamma+2s+1} f_j(v_*) \;dv_*.
\end{align} 
The following inequality holds 
\begin{align} \label{K_M_2_inequ}
    {K}'_{f_j}(u,u')\leq \overline{K}'_{f_j}(u,u'),
\end{align} since $R$ is the closest point to $v_*$ in extension line with $u'$ and $u$ from $v_*$.(See Figure \ref{Pre_M_F_2}.) \vspace{3mm}

\subsubsection{Estimate on the collision operator for mixture model}
We estimate kernel $K_f$ in \eqref{Kf_j_M_1} (also in \eqref{Kf_M_2}) and the collision operator $Q^{s}_{ji}(f_j,\phi)(v)$ in mixture model when  $\phi(v)\in C^{2}$ is a test function. These estimates are similar as the inelastic model case of  Section 3.1.1.  We will consider two cases : $m_i<m_j$ and $m_i>m_j$ separately. \vspace{2mm}

\begin{lemma} ($K_{f_j}$ estimate for $m_i < m_j$) \label{Kf_j_estimate_M_1}
Assume that $\gamma<0$ and $\gamma+2s \in [0,2]$(moderately soft potentials) and $E_0,M_0<+\infty$ in \eqref{hydro_M}. Let $f_i$, $f_j$ be the density of particles of mass $m_i$, $m_j$ and $m_i<m_j.$ The following estimates hold. \\
(i) For any $0<r \leq 1$,
\begin{align*} 
    \int_{B_{r}(v)}|v-v'|^2K_{f_j}(v,v')\; dv' &\leq \int_{B_{r}(v)}|v-v'|^2\overline{K}_{f_j}(v,v')\; dv' \lesssim (1+|v|)^{\gamma+2s}r^{2-2s}, \\
     \int_{\mathbb{R}^d/B_{r}(v)}K_{f_j}(v,v')\; dv' &\lesssim (1+|v|)^{\gamma+2s}r^{-2s}.
\end{align*}
(ii) For any $1<r<+\infty$,
\begin{align*}  
    \int_{B_{r}(v)}|v-v'|^2K_{f_j}(v,v')\; dv' &\leq \int_{B_{r}(v)}|v-v'|^2\overline{K}_{f_j}(v,v')\; dv' \lesssim (1+|v|)^{\gamma+2s}r^{\gamma+3}, \\
     \int_{\mathbb{R}^d/B_{r}(v)}K_{f_j}(v,v')\; dv' &\lesssim (1+|v|)^{\gamma+2s}r^{\gamma}.
\end{align*}
Here, $B_r(v) =\{x\in \mathbb{R}^d | \;|v-x|<r \}$ and
$K_{f_j}(v,v')$ and $\overline{K}_{f_j}(v,v')$ are given in \eqref{Kf_j_M_1} and \eqref{K'f_j_M_1}, respectively.
\end{lemma}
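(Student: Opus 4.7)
The plan is to run the same proof scheme as in Lemma \ref{Kf_est_I}, with the inelastic geometric data replaced by the mixture data from \eqref{Pre_P_M_1}--\eqref{E_pv'_M_1}. The key observation is that $P$ still lies on the line through $v$ and $v'$: since $P = \tfrac{m_i+m_j}{2m_j}v + \tfrac{m_j-m_i}{2m_j}v'$, we have $|v - P| = \tfrac{m_j-m_i}{2m_j}|v - v'|$, and the ratio $\tfrac{m_j-m_i}{2m_j} \in (0, \tfrac{1}{2})$ plays exactly the role of $\tfrac{1}{\beta} - 1$ in the inelastic argument. Thus the triangle inequalities I would record first are
\begin{equation*}
|v_*' - v| \le |v_*' - P| + \tfrac{m_j-m_i}{2m_j}|v - v'|, \qquad |v_*' - P| \le |v_*' - v| + \tfrac{m_j-m_i}{2m_j}|v - v'|,
\end{equation*}
which is the mixture analog of \eqref{tri_inequ_Kf_est_I}--\eqref{tri_inequ_Kf_est_I'}.

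For the first estimate in (i) and (ii), I would use \eqref{K_M_1_inequ} to replace $K_{f_j}$ by $\overline{K}_{f_j}$, plug in \eqref{K'f_j_M_1}, and apply the first triangle inequality to bound the integrand by
\begin{equation*}
|v - v'|^{2-d-2s} \bigl( |v_*' - P|^{\gamma+2s+1} + |v - v'|^{\gamma+2s+1} \bigr) f_j(v_*').
\end{equation*}
Parametrizing $v' - v = l\vec{a}$ with $\vec{a} \in \mathbb{S}^{d-1}$ and $v_*' - P = k\vec{b}$ with $\vec{b} \perp \vec{a}$, the $d-2$ missing angular directions in the hyperplane $E_{Pv'}$ absorb into the $k$-power, the $v_*'$ integral extends to $\mathbb{R}^d$, and using the second triangle inequality together with the finiteness of mass and energy in \eqref{hydro_M} gives $\int_{\mathbb{R}^d} |v_*' - v|^{\gamma+2s} f_j(v_*')\,dv_*' \lesssim (1+|v|)^{\gamma+2s}$, which is legitimate since $\gamma + 2s \in [0,2]$. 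The remaining $l$-integrals converge because $0 < 2 - 2s \le \gamma + 2 < \gamma + 3$, producing $\max\{r^{2-2s}, r^{\gamma+2}, r^{\gamma+3}\}$, whose small-$r$ branch is $r^{2-2s}$ and large-$r$ branch is $r^{\gamma+3}$.

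The second estimate in (i) and (ii) proceeds identically but with the $l$-integral taken over $(r, \infty)$ and one less power of $|v - v'|$ in the integrand. The resulting $l$-integrals converge because $-2s \le \gamma < 0$, giving $\max\{r^{-2s}, r^{\gamma}\}$, i.e.\ $r^{-2s}$ for $r \le 1$ and $r^{\gamma}$ for $r > 1$. All constants that depend on the mass ratios (including $(\tfrac{2m_j}{m_i+m_j})^{2s}$ from \eqref{K'f_j_M_1}) are bounded and absorbed into $\lesssim$.

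The argument is essentially a transcription of the inelastic proof, so there is no genuine obstacle; the only point requiring care is the constant $\tfrac{m_j - m_i}{2m_j}$ in the triangle inequalities, which I must verify stays strictly in $(0, \tfrac{1}{2})$ under the hypothesis $m_i < m_j$, ensuring the $|v - v'|^{\gamma+2s+1}$ remainder term has the same scaling as in the inelastic case. All subsequent exponent bookkeeping then follows verbatim from the proof of Lemma \ref{Kf_est_I}.
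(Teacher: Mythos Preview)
Your proposal is correct and follows essentially the same approach as the paper: the paper's own proof simply says to rename $v_*\to v_*'$, $u\to v'$, $u'\to v$, replace the inelastic point $P$ by the mixture point $P$ from \eqref{Pre_P_M_1}, record the analogous triangle inequalities \eqref{Kf_inequal_M_1}--\eqref{Kf_inequal_M_1'}, and then omit the details as identical to Lemma~\ref{Kf_est_I}. You have carried out exactly this transcription, with the correct identification of the coefficient $\tfrac{m_j-m_i}{2m_j}$ playing the role of $\tfrac{1}{\beta}-1$.
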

\begin{proof} In Lemma \ref{Kf_est_I}, we rename $v_*$ to $v_*'$ and $u$ to $v'$ and $u'$ to $v$, and use the point P defined in \eqref{Pre_P_M_1} instead of \eqref{P_I}. 
Also, we use the following triangle inequalities, 
\begin{align} 
    &|v-v_*'| \leq |v_*'-P|+|P-v| = |v_*'-P|+\frac{m_j-m_i}{2m_j}|v-v'|, \label{Kf_inequal_M_1} \\
    &|v_*-P| \leq |v-v_*'|+\frac{m_j-m_i}{2m_j}|v-v'|, \label{Kf_inequal_M_1'}
\end{align} instead of \eqref{tri_inequ_Kf_est_I}, \eqref{tri_inequ_Kf_est_I'}.
Here, the point $P$ is also defined in \eqref{Pre_P_M_1}. Then, the proof is almost similar to Lemma \ref{Kf_est_I} and we omit the details.
\end{proof} \vspace{3mm}

\begin{lemma} ($K_{f_j}$ estimate, $m_i>m_j$) \label{Kf_j_estimate_M_2}
Assume that $\gamma<0$ and $\gamma+2s \in [0,2]$(moderately soft potentials) and $E_0,M_0<+\infty$ in \eqref{hydro_M}. Let $f_i$, $f_j$ be the density of particles of mass $m_i$, $m_j$ and $m_i>m_j.$ The following estimates hold.
\\
(i) For any $0<r \leq 1$,
\begin{align*} 
 \int_{B_{r}(u')}|u-u'|^2K'_{f_j}(u,u')\; du &\leq \int_{B_{r}(u')}|u-u'|^2\overline{K}'_{f_j}(u,u')\; du \lesssim (1+|u'|)^{\gamma+2s}r^{2-2s}, \\
 \int_{\mathbb{R}^d/B_{r}(u')}K'_{f_j}(u,u')\; du &\lesssim (1+|u'|)^{\gamma+2s}r^{-2s}.
\end{align*}
(ii) For any $1<r<+\infty$,
\begin{align*}  
    \int_{B_{r}(u')}|u-u'|^2K'_{f_j}(u,u')\; du &\leq \int_{B_{r}(u')}|u-u'|^2\overline{K}'_{f_j}(u,u')\; du \lesssim (1+|u'|)^{\gamma+2s}r^{\gamma+3},\\
    \int_{\mathbb{R}^d/B_{r}(u')}K'_{f_j}(u,u')\; du &\lesssim (1+|u'|)^{\gamma+2s}r^{\gamma}.
\end{align*}
Here,  $B_r(u') =\{x\in \mathbb{R}^d | \;|u'-x|<r \}$ and $K'_{f_j}(u,u')$, $\overline{K}'_{f_j}(u,u')$ are given in \eqref{Kf_M_2}, \eqref{K'f_j_M_2}, respectively. 
\end{lemma}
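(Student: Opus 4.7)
The plan is to mirror, almost verbatim, the argument of Lemma \ref{Kf_est_I} and Lemma \ref{Kf_j_estimate_M_1}, now replacing the geometric role of the point $P$ by the point $R$ of \eqref{Pre_R_M_2}. Since $R$ lies on the extension of the segment from $u'$ through $u$ with $|R-u'| = \frac{m_i-m_j}{2m_j}|u-u'|$, the two triangle inequalities
\begin{equation*}
|v_* - u'| \leq |v_* - R| + \tfrac{m_i-m_j}{2m_j}|u-u'|,
\qquad
|v_* - R| \leq |v_* - u'| + \tfrac{m_i-m_j}{2m_j}|u-u'|,
\end{equation*}
take over the role of \eqref{tri_inequ_Kf_est_I} and \eqref{tri_inequ_Kf_est_I'}. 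The pointwise inequality $K'_{f_j}(u,u') \leq \overline{K}'_{f_j}(u,u')$ from \eqref{K_M_2_inequ} plays the role of \eqref{K_I_inequal}, so that the symmetric kernel $\overline{K}'_{f_j}$ may be used throughout the upper bounds.

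For the two integrals $\int_{B_r(u')}|u-u'|^2 K'_{f_j}\,du$, I would parametrize $u - u' = l\,\vec{a}$ with $\vec{a}\in \mathbb{S}^{d-1}$ and $v_* - R = k\,\vec{b}$ with $\vec{b}\in\mathbb{S}^{d-1}(R)$ and $\vec{a}\perp\vec{b}$ (the latter encoded by the hyperplane constraint $v_*\in E_{Ru'}$ of \eqref{E_Rv'_M_2}, which contributes the factor $\delta(\vec{a}\cdot\vec{b})$). Inserting the first triangle inequality into $\overline{K}'_{f_j}$ bounds $|v_* - u'|^{\gamma+2s+1}$ by $|v_*-R|^{\gamma+2s+1} + |u-u'|^{\gamma+2s+1}$, and the $k$-integration (after extending $v_*$ to all of $\mathbb{R}^d$) produces $|v_*-R|^{\gamma+2s}f_j(v_*)$. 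The second triangle inequality combined with the hydrodynamic bound \eqref{hydro_M} then yields, since $\gamma+2s\in[0,2]$, the moment comparison $\int|v_*-w|^{\gamma+2s}f_j(v_*)\,dv_* \lesssim (1+|w|)^{\gamma+2s}$. The remaining radial integral in $l$ over $[0,r]$ of $l^{1-2s}$, $l^{\gamma+1}$ and $l^{\gamma+2}$ produces the three powers $r^{2-2s}$, $r^{\gamma+2}$, $r^{\gamma+3}$, whose maximum is $r^{2-2s}$ on $(0,1]$ and $r^{\gamma+3}$ on $(1,\infty)$, giving (i) and (ii).

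The estimates for $\int_{\mathbb{R}^d\setminus B_r(u')} K'_{f_j}(u,u')\,du$ follow the same scheme but with $l^{-1-2s}$ instead of $l^{1-2s}$; the resulting powers $r^{-2s}+r^{\gamma}$ are dominated by $r^{-2s}$ for $r\le 1$ and by $r^{\gamma}$ for $r>1$, using $-2s\le\gamma<0$ to ensure integrability at infinity. The only non-bookkeeping point, and the one I expect to require a little care, is the geometric identification: I need to verify that the transverse hyperplane $E_{Ru'}$ of \eqref{E_Rv'_M_2} plays exactly the same role for the $m_i>m_j$ Carleman representation as $E_{Pu'}$ did in the inelastic case, so that the Jacobian in the $(l,\vec a,k,\vec b)$ change of variables and the orthogonality constraint $\vec{a}\perp\vec{b}$ come out correctly. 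Once this is confirmed, the remainder of the proof is a term-by-term repetition of Lemma \ref{Kf_est_I} with $\tfrac{1}{\beta}-1$ replaced by $\tfrac{m_i-m_j}{2m_j}$ and the overall factor $\beta^{2s}$ replaced by $\bigl(\tfrac{2m_j}{m_i+m_j}\bigr)^{2s}$, so the details can reasonably be omitted.
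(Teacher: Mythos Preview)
Your proposal is correct and matches the paper's own proof essentially line for line: the paper also instructs the reader to repeat Lemma \ref{Kf_est_I} with the point $P$ replaced by $R$ from \eqref{Pre_R_M_2} and the triangle inequalities \eqref{tri_inequ_Kf_est_I}--\eqref{tri_inequ_Kf_est_I'} replaced by $|v_*-u'|\le |v_*-R|+\tfrac{m_i-m_j}{2m_j}|u-u'|$ and $|v_*-R|\le |v_*-u'|+\tfrac{m_i-m_j}{2m_j}|u-u'|$, then omits the details. Your outline is in fact more detailed than the paper's version.
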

\begin{proof} 
In Lemma \ref{Kf_est_I}, we use point $R$ defined in  \eqref{Pre_R_M_2} instead of the point $P$. Also, we use the following triangle inequalities,
\begin{align} 
  &|v_*-u'| \leq |v_*-R|+\frac{m_i-m_j}{2m_j}|u-u'|,\label{Kf_inequal_M_2}\\
  &|v_*-R| \leq |v_*-u'|+\frac{m_i-m_j}{2m_j}|u-u'|, \label{Kf_inequal_M_2'}
\end{align} instead of \eqref{tri_inequ_Kf_est_I}, \eqref{tri_inequ_Kf_est_I'}. Here, the point $R$ is also defined in \eqref{Pre_R_M_2}. The proof of this Lemma is almost similar to Lemma \ref{Kf_est_I} and we omit the details.
\end{proof} \vspace{3mm}

\begin{lemma} ($Q^s$ estimate for $m_i<m_j$) \label{Q_est_M_1}
Assume $\gamma<0$ that and $\gamma+2s \in [0,2]$(moderately soft potentials) and $E_0,M_0<+\infty$ in \eqref{hydro_M}. Let $f_i$, $f_j$ be the density of particles of mass $m_i$, $m_j$ and $m_i<m_j$. Let $\psi$ be a bounded, $C^2$ function. The $Q^{s}_{ji}(f_j,\psi)(v)$ in \eqref{Qs_Kf_j_M_1} is written by
\begin{align*}
Q^{s}_{ji}(f_j,f_i)(t,v) &= \int K_{f_j}(v,v')(f_i(v')-f_i(v)) \;dv'. 
\end{align*}\\
\noindent If $\psi$ satisfies $\|\psi\|_{L^{\infty}} \leq \text{max} \{\|\nabla^2 \psi\|_{L^\infty}, \|\nabla\psi\|_{L^\infty} \}$, then
\begin{align} \label{Q_est_1_M_1}
     |Q_{ji}^{s}(f_j,\psi)(v)| 
    \lesssim
  \|\psi\|_{L^{\infty}}^{1-s}(\text{max}\{\|\nabla^2 \psi\|_{L^\infty}, \|\nabla \psi\|_{L^\infty} \})^{s}(1+|v|)^{\gamma+2s}. \\ \notag
\end{align} 
Or else, if $\psi$ satisfies $\|\psi\|_{L^{\infty}} > \text{max} \{\|\nabla^2 \psi\|_{L^\infty}, \|\nabla\psi\|_{L^\infty} \}$, then
\begin{align*}
     |Q^{s}_{ji}(f,\psi)(v)| 
    \lesssim
    \|\psi\|_{L^{\infty}}^{1+\gamma/3}(\text{max}\{\|\nabla^2 \psi\|_{L^\infty}, \|\nabla \psi\|_{L^\infty} \})^{-\gamma/3}(1+|v|)^{\gamma+2s}.
\end{align*}
\end{lemma}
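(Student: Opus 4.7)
The plan is to mirror the argument of Lemma \ref{Q_est_I}, replacing the inelastic kernel by $K_{f_j}(v,v')$ in \eqref{Kf_j_M_1} and using the mixture-version triangle inequalities \eqref{Kf_inequal_M_1}, \eqref{Kf_inequal_M_1'} in place of \eqref{tri_inequ_Kf_est_I}, \eqref{tri_inequ_Kf_est_I'}. The key structural ingredients carry over verbatim: $K_{f_j}\le\overline{K}_{f_j}$ by \eqref{K_M_1_inequ}, the symmetrized kernel satisfies $\overline{K}_{f_j}(v,v+w)=\overline{K}_{f_j}(v,v-w)$, and the four estimates of Lemma \ref{Kf_j_estimate_M_1} hold in both ranges $0<r\le 1$ and $1<r<+\infty$.

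First I would decompose
\[
 Q^{s}_{ji}(f_j,\psi)(v)=\int_{\mathbb{R}^{d}\setminus B_{r}(v)} K_{f_j}(v,v')(\psi(v')-\psi(v))\,dv'+\text{p.v.}\!\int_{B_{r}(v)} K_{f_j}(v,v')(\psi(v')-\psi(v))\,dv',
\]
choosing a radius $r>0$ to be optimized. For the outer part, I would bound $|\psi(v')-\psi(v)|\le 2\|\psi\|_{L^\infty}$ and apply the outer estimates of Lemma \ref{Kf_j_estimate_M_1} to obtain $\lesssim \max\{r^{-2s},r^{\gamma}\}\|\psi\|_{L^\infty}(1+|v|)^{\gamma+2s}$.

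For the inner part I would exploit the symmetry of $\overline{K}_{f_j}$: the principal-value integral $\text{p.v.}\int_{B_{r}(v)}(v'-v)\cdot\nabla\psi(v)\,\overline{K}_{f_j}(v,v')\,dv'$ vanishes, so writing $\psi(v')-\psi(v)=(v'-v)\cdot\nabla\psi(v)+O(|v'-v|^2\|\nabla^2\psi\|_{L^\infty})$ gives
\[
\Bigl|\int_{B_{r}(v)}\overline{K}_{f_j}(v,v')(\psi(v')-\psi(v))\,dv'\Bigr|\lesssim \|\nabla^{2}\psi\|_{L^\infty}\int_{B_{r}(v)}|v'-v|^{2}\overline{K}_{f_j}(v,v')\,dv',
\]
which by Lemma \ref{Kf_j_estimate_M_1} is $\lesssim \max\{r^{2-2s},r^{\gamma+3}\}\|\nabla^{2}\psi\|_{L^\infty}(1+|v|)^{\gamma+2s}$. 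The remaining error involves $\overline{K}_{f_j}-K_{f_j}$ paired against a first-order Taylor remainder; using \eqref{Kf_inequal_M_1} to bound $|v-v_*'|^{\gamma+2s+1}-|v_*'-P|^{\gamma+2s+1}\lesssim |v'-v|^{\gamma+2s+1}$, I obtain an extra term $\lesssim r^{\gamma+2}\|\nabla\psi\|_{L^\infty}(1+|v|)^{\gamma+2s}$, exactly as in \eqref{second_term_in_Q_est_I}.

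Combining these two contributions,
\[
 |Q^{s}_{ji}(f_j,\psi)(v)|\lesssim \max\{r^{-2s},r^{\gamma}\}\|\psi\|_{L^\infty}(1+|v|)^{\gamma+2s}+\max\{r^{2-2s},r^{\gamma+2},r^{\gamma+3}\}M_\psi(1+|v|)^{\gamma+2s},
\]
where $M_\psi=\max\{\|\nabla^{2}\psi\|_{L^\infty},\|\nabla\psi\|_{L^\infty}\}$. Optimizing in $r$ finishes the proof: in the regime $\|\psi\|_{L^\infty}\le M_\psi$ I choose $r=(\|\psi\|_{L^\infty}/M_\psi)^{1/2}\le 1$, yielding $\|\psi\|_{L^\infty}^{1-s}M_\psi^{s}(1+|v|)^{\gamma+2s}$; in the opposite regime $\|\psi\|_{L^\infty}> M_\psi$ I choose $r=(\|\psi\|_{L^\infty}/M_\psi)^{1/3}>1$, yielding $\|\psi\|_{L^\infty}^{1+\gamma/3}M_\psi^{-\gamma/3}(1+|v|)^{\gamma+2s}$. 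The only genuinely new bookkeeping compared to Lemma \ref{Q_est_I} is checking that the mixture-specific geometric constant $\tfrac{m_j-m_i}{2m_j}\in(0,\tfrac12)$ appearing in \eqref{Kf_inequal_M_1}, \eqref{Kf_inequal_M_1'} can be absorbed into the $\lesssim$ symbols; I expect this to be the only potential obstacle, and it dissolves once one tracks that the constant is bounded uniformly by the mass ratio.
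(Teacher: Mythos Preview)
Your proposal is correct and follows precisely the approach the paper takes: the paper's own proof states only that it is ``almost similar to Lemma \ref{Q_est_I}'' with Lemma \ref{Kf_j_estimate_M_1} substituted for Lemma \ref{Kf_est_I}, and omits the details. You have accurately reconstructed those omitted details, including the inner/outer split, the cancellation via the symmetry of $\overline{K}_{f_j}$, the handling of the $\overline{K}_{f_j}-K_{f_j}$ error via \eqref{Kf_inequal_M_1}, and the two-regime optimization in $r$.
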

\begin{proof}  
The proof of this Lemma is almost similar to Lemma \ref{Q_est_I}, and we apply Lemma \ref{Kf_j_estimate_M_1} instead of Lemma \ref{Kf_est_I}. We omit details.
\end{proof} \vspace{3mm}

\begin{lemma} ($Q^s$ estimate for $m_i > m_j$) \label{Q_est_M_2} Assume that $\gamma<0$ and $\gamma+2s \in [0,2]$(moderately soft potentials) and $E_0,M_0<+\infty$ in \eqref{hydro_M}. Let $f_i$, $f_j$ be the density of particles of mass $m_i$, $m_j$ and $m_i>m_j.$ Let $\psi$ be a bounded, $C^2$ function. The $Q^{s}_{ji}(f_j,\psi)(v)$ in \eqref{Qs_ji_Cal_M_2} is written by
\begin{align*}
 Q_{ji}^{s}(f_j,f_i)(v) \doteq \int K'_{f_j}(u,u')\delta(u'-v)(f_i(u)-f_i(u')) \;dudu'= \int K'_{f_j}(u,v)(f_i(u)-f_i(v))\;du.
\end{align*} \\
\noindent If $\psi$ satisfies $\|\psi\|_{L^{\infty}} \leq \text{max} \{\|\nabla^2 \psi\|_{L^\infty}, \|\nabla\psi\|_{L^\infty} \}$, 
\begin{align} \label{Q_est_1_M_2}
     |Q_{ji}^{s}(f_j,\psi)(v)| 
    \lesssim
  \|\psi\|_{L^{\infty}}^{1-s}(\text{max}\{\|\nabla^2 \psi\|_{L^\infty}, \|\nabla \psi\|_{L^\infty} \})^{s}(1+|v|)^{\gamma+2s},\\ \notag  
\end{align} 
Or else, if $\psi$ satisfy $\|\psi\|_{L^{\infty}} > \text{max} \{\|\nabla^2 \psi\|_{L^\infty}, \|\nabla\psi\|_{L^\infty} \}$, 
\begin{align*}
     |Q^{s}_{ji}(f,\psi)(v)| 
    \lesssim
    \|\psi\|_{L^{\infty}}^{1+\gamma/3}(\text{max}\{\|\nabla^2 \psi\|_{L^\infty}, \|\nabla \psi\|_{L^\infty} \})^{-\gamma/3}(1+|v|)^{\gamma+2s}.
\end{align*}
\end{lemma}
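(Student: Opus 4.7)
The plan is to mirror the proof of Lemma \ref{Q_est_I}, using the kernel estimates of Lemma \ref{Kf_j_estimate_M_2} in place of Lemma \ref{Kf_est_I}. The only structural difference is that the asymmetry parameter $\tfrac{1}{\beta}-1$ in the inelastic case is replaced by the mass ratio $\tfrac{m_i-m_j}{2m_j}$ appearing in the triangle inequalities \eqref{Kf_inequal_M_2}, \eqref{Kf_inequal_M_2'}; this changes only implicit constants.

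First I would decompose
\begin{equation*}
|Q^{s}_{ji}(f_j,\psi)(v)| \leq \Bigl|\int_{\mathbb{R}^d\setminus B_r(v)} K'_{f_j}(u,v)(\psi(u)-\psi(v))\,du\Bigr| + \Bigl|\int_{B_r(v)} K'_{f_j}(u,v)(\psi(u)-\psi(v))\,du\Bigr|.
\end{equation*}
The outer piece is handled by the crude bound $|\psi(u)-\psi(v)| \leq 2\|\psi\|_{L^\infty}$ combined with the tail estimate for $K'_{f_j}$ from Lemma \ref{Kf_j_estimate_M_2}, yielding $\lesssim \max\{r^{-2s},r^{\gamma}\}\|\psi\|_{L^\infty}(1+|v|)^{\gamma+2s}$.

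For the inner piece I would replace $K'_{f_j}$ by its symmetrized version $\overline{K}'_{f_j}$ defined in \eqref{K'f_j_M_2}, since the symmetry $\overline{K}'_{f_j}(u'+w,u')=\overline{K}'_{f_j}(u'-w,u')$ kills the first-order Taylor term in the principal value (exactly as in \eqref{Kf_inter_in_r_Q_est_I}). A second-order Taylor expansion of $\psi$ around $v$ then gives a factor $|u-v|^2 \|\nabla^2\psi\|_{L^\infty}$, and Lemma \ref{Kf_j_estimate_M_2} controls $\int_{B_r(v)}|u-v|^2 \overline{K}'_{f_j}(u,v)\,du$ by $\max\{r^{2-2s},r^{\gamma+3}\}(1+|v|)^{\gamma+2s}$. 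The asymmetric remainder $\overline{K}'_{f_j}-K'_{f_j}$ is paired with $|u-v|\|\nabla\psi\|_{L^\infty}$ and bounded using the inequality
\begin{equation*}
\bigl||v_*-u'|^{\gamma+2s+1}-|v_*-R|^{\gamma+2s+1}\bigr| \lesssim |u-u'|^{\gamma+2s+1},
\end{equation*}
which follows from \eqref{Kf_inequal_M_2}--\eqref{Kf_inequal_M_2'} just as \eqref{tri_inequ_power} did in the inelastic setting; this contributes $\lesssim r^{\gamma+2}\|\nabla\psi\|_{L^\infty}$, absorbed into the $\max\{r^{2-2s},r^{\gamma+2},r^{\gamma+3}\}\cdot\max\{\|\nabla^2\psi\|_{L^\infty},\|\nabla\psi\|_{L^\infty}\}(1+|v|)^{\gamma+2s}$ bound.

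Finally I would optimize $r$ exactly as in Lemma \ref{Q_est_I}: choosing $r=(\|\psi\|_{L^\infty}/\max\{\|\nabla^2\psi\|_{L^\infty},\|\nabla\psi\|_{L^\infty}\})^{1/2}$ gives the first estimate \eqref{Q_est_1_M_2} in the regime $r\leq 1$, while choosing $r=(\|\psi\|_{L^\infty}/\max\{\|\nabla^2\psi\|_{L^\infty},\|\nabla\psi\|_{L^\infty}\})^{1/3}$ produces the second estimate in the regime $r>1$. I expect no real obstacle here; the only bookkeeping concern is ensuring the mass-dependent constants from \eqref{Kf_inequal_M_2} are absorbed into the $\lesssim$, which is fine since $m_i$ and $m_j$ are fixed. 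Consequently the proof reduces to citing Lemma \ref{Kf_j_estimate_M_2} at the corresponding steps of Lemma \ref{Q_est_I}, and the details may be omitted.
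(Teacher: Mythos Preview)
Your proposal is correct and follows exactly the approach the paper takes: the paper's proof simply states that the argument is almost identical to Lemma~\ref{Q_est_I}, with Lemma~\ref{Kf_j_estimate_M_2} invoked in place of Lemma~\ref{Kf_est_I}, and omits the details. Your write-up is a faithful expansion of those omitted details, correctly identifying that the only change is the replacement of $\tfrac{1}{\beta}-1$ by $\tfrac{m_i-m_j}{2m_j}$ in the triangle inequalities.
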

\begin{proof} 
The proof of this Lemma is almost similar to Lemma \ref{Q_est_I}, and we apply Lemma \ref{Kf_j_estimate_M_2} instead of Lemma \ref{Kf_est_I}. We omit details.
\end{proof} \vspace{3mm}

\subsubsection{The nonsingular part for mixture model}
We prove the cancellation lemma in mixture model.
If $m_i>m_j$, the function T is not well-defined and not one to one function in \eqref{T_M_1}, \eqref{T'_M_1}, and $|\frac{dv_*'}{dv_*}|$ can be zero in \eqref{Jacobian_CC_M_1}. Thus, we use the weak form of the collision operator in Lemma \ref{CC_M_2}.
\vspace{2mm}

\begin{lemma} (Cancellation lemma for $m_i < m_j$) \label{CC_M_1}
Suppose that $B_{ji}(|v-v_*|,\cos a)=|v-v_*|^{\gamma}b_{ji}(\cos a), \;a \in [0,\pi]$
and $\gamma>-d$ in \eqref{B_nc_M'}. Let $f_i$, $f_j$ be the density of particles of mass $m_i$, $m_j$ and $m_i<m_j.$ Then the $Q_{ji}^{ns}(f_j,f_i)(v)$ in \eqref{split_Q_M_1} is written by  
\begin{align}  \label{Qns_CC_M_1} 
\begin{split}
    Q_{ji}^{ns}(f_j, f_i)(v) &= f_i(v)\int B_{ji}(|v-v_*|, \cos a)(f_j(v'_*)-f_j(v_*)) \; d\sigma dv_*
    \\&= f_i(v) \int f_j(v_*)S_{ji}(|v-v_*|) \; dv_*,
\end{split}
\end{align}
where  
\begin{align*}
    S_{ji}(|v-v_*|)
    &= |\mathbb{S}^{d-2}||v-v_*|^{\gamma}\int_0^\pi b_{ji}(\cos w)\sin^{d-2}w [(\frac{m_i}{m_i+m_j}\cos a + \frac{m_j}{m_i+m_j}\cos A)^{-d-\gamma}-1] \; dw.
\end{align*}
Here, $w \doteq \sin^{-1}(\frac{m_i}{m_j}\sin a)+a \doteq A+a(\;0<A<\frac{\pi}{2}, \; 0<w,a<\pi \;).$
Moreover, $Q_{ji}^{ns}(f_j,f_i)(v)$ is well-defined and positive.
\end{lemma}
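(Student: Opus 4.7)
The plan is to mirror the strategy of Lemma \ref{CC_I}, adapting every step to the mass-weighted collision geometry of Figure \ref{Pre_M_F_1}. Starting from the first term in \eqref{Qns_CC_M_1}, I would fix $v$ and, for each fixed $\sigma$, perform the change of variables $v_* \to v_*'$ using the formula for $v_*'$ in \eqref{v'_M'}. A direct computation of the Jacobian matrix $\frac{\dd v_*'}{\dd v_*} = \tfrac{m_j}{m_i+m_j} I - \tfrac{m_i}{m_i+m_j}\sigma\otimes \tfrac{v_*-v}{|v-v_*|}$ yields
\[
    \left|\frac{\dd v_*'}{\dd v_*}\right| = \left(\frac{m_j}{m_i+m_j}\right)^{d}\left(1-\frac{m_i}{m_j}\,\frac{v-v_*}{|v-v_*|}\cd\sigma\right),
\]
which plays exactly the role of $(1-\tfrac{\beta}{2})^{d}(1+\tfrac{\beta}{2-\beta}k\cd\sigma)$ in \eqref{B_CC_I}--\eqref{replace_CC_I}.

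Next, with $v$ and $v_*'$ fixed, I would introduce the two angles $a$ (between $v-v_*$ and $\sigma$) and $b$ (between $v-v_*'$ and $\sigma$), apply the law of sines in Figure \ref{Pre_M_F_1} exactly as in \eqref{CC_sine_law_I}, and derive the relation $\sin(a-b) = \tfrac{m_i}{m_j}\sin b$, defining the map $T(\cos b) = \cos a$ analogous to \eqref{CC_T_I}. Setting $A := \sin^{-1}(\tfrac{m_i}{m_j}\sin a) \in (0,\tfrac{\pi}{2})$ and $w := a+A \in (0,\pi)$, the trigonometric identities
\[
    \frac{\sin a}{\sin w} = \Bigl(\tfrac{m_i}{m_j}\cos a + \cos A\Bigr)^{-1},\quad 1+\tfrac{m_i}{m_j}\cos w = \cos A\bigl(\tfrac{m_i}{m_j}\cos a+\cos A\bigr),\quad \left|\frac{\dd w}{\dd a}\right| = \frac{\tfrac{m_i}{m_j}\cos a+\cos A}{\cos A}
\]
allow me to absorb all prefactors, including the Jacobian above and the one from $\sigma\to a\to w$, into the clean expression $(\tfrac{m_i}{m_i+m_j}\cos a + \tfrac{m_j}{m_i+m_j}\cos A)^{-d-\gamma}$. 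Subtracting the second term $\int B_{ji}(|v-v_*|,\cos a)f_j(v_*)\,\dd\sigma\,\dd v_*$, rewritten by the same substitution $a\to w$, then produces the claimed kernel $S_{ji}(|v-v_*|)$.

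For positivity and well-definedness, I would argue as follows. Since $m_i < m_j$, we have $\sin A = \tfrac{m_i}{m_j}\sin a < \sin a$, hence $\cos A = \sqrt{1-(\tfrac{m_i}{m_j})^{2}\sin^{2}a} > |\cos a|$, and in particular
\[
    0 < \tfrac{m_i}{m_i+m_j}\cos a + \tfrac{m_j}{m_i+m_j}\cos A < \cos A < 1,
\]
because the term $\tfrac{m_j}{m_i+m_j}\cos A$ dominates $\tfrac{m_i}{m_i+m_j}|\cos a|$ even when $\cos a < 0$, using $m_i < m_j$. Combined with $\gamma > -d$, this yields $S_{ji} > 0$. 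For integrability near $w=0$, I would apply the elementary inequality $1-x^{d+\gamma} \leq \max\{1,d+\gamma\}(1-x)$ for $x\in(0,1]$ together with
\[
    1-\bigl(\tfrac{m_i}{m_i+m_j}\cos a+\tfrac{m_j}{m_i+m_j}\cos A\bigr) \;\lesssim\; \sin^{2}\tfrac{a}{2}+\sin^{2}\tfrac{A}{2} \;\leq\; 2\sin^{2}\tfrac{w}{2},
\]
which compensates the singularity $b_{ji}(\cos w)\approx |w|^{-(d-1)-2s}$ modulated by $\sin^{d-2}w$, exactly as in Lemma \ref{CC_I}.

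The main obstacle is the bookkeeping of the Jacobian: unlike the inelastic case where the asymmetry is governed by a single parameter $\beta$, here the pair $(m_i,m_j)$ enters asymmetrically through both the coefficient $\tfrac{m_j}{m_i+m_j}$ of $v_*$ and the coefficient $\tfrac{m_i}{m_i+m_j}$ of $\sigma$ in $v_*'$, so aligning the algebraic identities above with the geometric angle $A$ requires care. Once this alignment is made, the rest of the computation is parallel to Lemma \ref{CC_I} and I would omit the repeated details.
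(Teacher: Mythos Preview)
Your approach is essentially identical to the paper's: the same change of variables $v_* \to v_*'$ with the Jacobian \eqref{Jacobian_CC_M_1}, the same angles $a,b$ and map $T$ via the law of sines (the paper draws this as Figure \ref{CC_M_1_F}), the same substitution $a\to w$ using precisely the three identities you list (recorded in the paper as \eqref{using}), and the same positivity and integrability arguments. One slip to correct: the Jacobian determinant should read $(\tfrac{m_j}{m_i+m_j})^{d}\bigl(1+\tfrac{m_i}{m_j}\,\tfrac{v-v_*}{|v-v_*|}\cd\sigma\bigr)$ with a \emph{plus} sign---your matrix is written correctly with $\tfrac{v_*-v}{|v-v_*|}$, but in passing to $\tfrac{v-v_*}{|v-v_*|}$ in the determinant you dropped a sign; the plus is exactly what makes your later identity $1+\tfrac{m_i}{m_j}\cos w=\cos A\bigl(\tfrac{m_i}{m_j}\cos a+\cos A\bigr)$ the relevant one.
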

\begin{proof} 
Recall \eqref{v'_M'},
\begin{align*}
       v' = \frac{m_i}{m_i+m_j}v+\frac{m_j}{m_i+m_j}v_*+\frac{m_j}{m_i+m_j}|v-v_*|
    \sigma, \notag \\
    v_*' =\frac{m_i}{m_i+m_j}v+\frac{m_j}{m_i+m_j}v_*-\frac{m_i}{m_i+m_j}|v-v_*|\sigma.
\end{align*}
Let us write $k,{'k}$ and the angles $a,b$ as 
\begin{figure} [t]
\centering
\includegraphics[width=4cm]{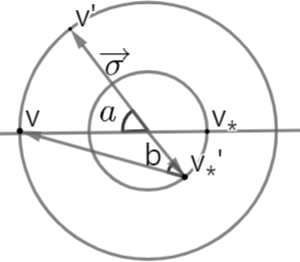}
\caption{The relation of $v,v_*$ and $v'_*$ \label{CC_M_1_F}}
\end{figure}
\begin{align*}
    k \cdot \sigma \doteq \frac{v-v_*}{|v-v_*|}\cdot \sigma = \cos a, \quad
    'k \cdot \sigma \doteq \frac{v-v_*'}{|v-v_*'|}\cdot \sigma = \cos b,
    \quad
    a,b \in (0,\pi). \text{\quad(See Figure \ref{CC_M_1_F}.)}
\end{align*}
\noindent Using the law of sine in Figure \ref{CC_M_1_F}, we get
\begin{align} \label{sine_law_M_1}
    \frac{\frac{m_j}{m_i+m_j}|v-v_*|}{\sin b} = \frac{|v-v_*'|}{\sin a} =\frac{\frac{m_i}{m_i+m_j}|v-v_*|}{\sin(a-b)}. 
\end{align}
Here, $a=b=0$, $a=b=\pi$, and if $a,b \in (0,\pi)$, then $a \neq b.$
From \eqref{sine_law_M_1}, we get $\sin(a-b)=\frac{m_i}{m_j}\sin b$. Since $0<\cos^{-1}(\frac{v-v_*}{|v-v_*|}\cdot \frac{v-v_*'}{|v-v_*'|})<\frac{\pi}{2}$, we infer $0<a-b<\frac{\pi}{2}$ and $a-b = \sin^{-1}(\frac{m_i}{m_j}\sin b)$. We define the map as 
\begin{align} 
    T(\cos b) = \cos(\sin^{-1}(\frac{m_i}{m_j}\sin b)+b) = \cos a, \label{T_M_1}
\end{align} then
\begin{align}
    T' = \frac{\sin a}{\sin b}(1+\frac{\frac{m_i}{m_j}\cos b}{\sqrt{1-(\frac{m_i}{m_j})^{2}\sin^{2}a}})>0 \label{T'_M_1} 
\end{align} since $m_i<m_j$ and $a,b \in (0,\pi).$ We define $\Psi_\sigma(v_*')=v_*$ where $v$ is fixed.
From \eqref{sine_law_M_1}, we get
\begin{align*}
    |v-v_*|=|v-\Psi_\sigma(v_*')|=\frac{\sin b}{\sin a}(\frac{m_j}{m_i+m_j})^{-1}|v-v_*'|=\frac{\sin b}{\sqrt{1-T^2(\cos b)}}(\frac{m_j}{m_i+m_j})^{-1}|v-v_*'|.
\end{align*} After renaming $v_*'$ to $v_*$ above, we obtain
\begin{align} \label{CC_M_replace}
    |v-\Psi_\sigma(v_*)|=\frac{\sin a}{\sqrt{1-T^2(\cos a)}}(\frac{m_j}{m_i+m_j})^{-1}|v-v_*|.
\end{align}
For the first term of  $Q^{ns}_{ji}(f_j,f_i)$, we perform change of variables $v_*\rightarrow v_*'$. From \eqref{v'_M'}, Jacobian determinant is obtained by 
\begin{align} \label{Jacobian_CC_M_1}
    |\frac{dv_*'}{dv_*}|=(\frac{m_j}{m_i+m_j})^d(1+\frac{m_i}{m_j}\frac{v-v_*}{|v-v_*|}\cdot \sigma)>0, 
\end{align} and this is always positive since $m_i<m_j$ We rename $v_*'$ to $v_*$ and replace $|v-\Psi_\sigma(v_*)|$ by \eqref{CC_M_replace}, then we get \eqref{rename_CC_M_1}. Thus, 
\begin{align}
     &\quad \int B_{ji}(|v-v_*|,\cos a)f_j(v_*') \;d\sigma dv_*  \label{trans_CC_M_1} \\ 
     &= (\frac{m_j}{m_i+m_j})^{-d}\int  B_{ji}(|v-v_*|,\cos a)f_j(v_*')(1+\frac{m_i}{m_j}\cos a)^{-1} \;d\sigma dv_*'  \notag \\ 
     &= (\frac{m_j}{m_i+m_j})^{-d}\int  B_{ji}(|v-\Psi_\sigma(v_*')|,T(\cos b)f_j(v_*')(1+\frac{m_i}{m_j}T(\cos b))^{-1} \;d\sigma dv_*' \notag \\
     &= (\frac{m_j}{m_i+m_j})^{-d}\int  B_{ji}(|v-\Psi_\sigma(v_*)|,T(\cos a)f_j(v_*)(1+\frac{m_i}{m_j}T(\cos a))^{-1} \;d\sigma dv_* \notag \\
     &= (\frac{m_j}{m_i+m_j})^{-d}\int B_{ji}((\frac{m_j}{m_i+m_j})^{-1}\frac{\sin a}{\sqrt{1-T^2(\cos a)}}|v-v_*|
   ,T(\cos a))f_j(v_*)(1+\frac{m_i}{m_j}T(\cos a))^{-1} \;d\sigma dv_* \label{rename_CC_M_1}.
\end{align}
\noindent  Let $w \doteq \sin^{-1}(\frac{m_i}{m_j}\sin a)+a\doteq A+a$, \;and\; $0 < A < \frac{\pi}{2}$, and $0 < w, a < \pi.$ Note that $T(\cos a) = \cos w$. 
The Jacobian determinant $|\frac{d w}{d a}|=1+\frac{\frac{m_i}{m_j}\cos a}{\sqrt{1-(\frac{m_i}{m_j})^{2}\sin^{2}a}}$ is positive since $m_i<m_j$. We observe
\begin{equation} \label{using}
\begin{split}
    &\frac{\sin a}{\sin w} = (\frac{m_i}{m_j}\cos a + \cos A)^{-1}, \quad
    1+\frac{m_i}{m_j}\cos w =\cos A\;(\frac{m_i}{m_j}\cos a + \cos A),\\
    &|\frac{dw}{da}|=
    1+\frac{\frac{m_i}{m_j}\cos a}{\sqrt{1-(\frac{m_i}{m_j})^{2}\sin^{2}a}} =\frac{1}{\cos A}  (\frac{m_i}{m_j}\cos a + \cos A).
\end{split}
\end{equation}
By changing of variables $\sigma \rightarrow a$ and $a \rightarrow w$, 
we obtain
\begin{align}
    &\quad \eqref{rename_CC_M_1} \notag \\
    &=|\mathbb{S}^{d-2}|(\frac{m_j}{m_i+m_j})^{-d}\int \int_0^\pi \sin^{d-2}a \; B_{ji}(\;\frac{\sin a}{\sin w}(\frac{m_j}{m_i+m_j})^{-1}|v-v_*|,\cos w)
    f_j(v_*)(1+\frac{m_i}{m_j}\cos w)^{-1}|\frac{da}{dw}|\; dwdv_*  \notag \\
    &= |\mathbb{S}^{d-2}|(\frac{m_j}{m_i+m_j})^{-d-\gamma}\int \int_0^\pi |v-v_*|^{\gamma} \frac{\sin^{\gamma+d-2}a}{\sin^{\gamma}w}(1+\frac{m_i}{m_j}\cos w)^{-1}b_{ji}(\cos w) f_j(v_*)|\frac{da}{dw}| \; dwdv_* \notag\\ 
     &=|\mathbb{S}^{d-2}|\int \int_0^\pi |v-v_*|^{\gamma}b_{ji}(\cos w)\sin^{d-2}w (\frac{m_i}{m_i+m_j}\cos a + \frac{m_j}{m_i+m_j}\cos A)^{-d-\gamma}f_j(v_*) \; dwdv_*, \notag
\end{align} 
where we used \eqref{using} in the last step. In conclusion, we get 
\begin{align*}
    \int B_{ji}(|v-v_*|, \cos a)(f_j(v'_*)-f_j(v_*)) \; d\sigma dv_*
    = \int f_j(v_*)S_{ji}(|v-v_*|) \; dv_*,
\end{align*}
where 
\begin{align*}
    S_{ji}(|v-v_*|)
    &= |\mathbb{S}^{d-2}||v-v_*|^{\gamma}\int_0^\pi b_{ji}(\cos w)\sin^{d-2}w [(\frac{m_i}{m_i+m_j}\cos a + \frac{m_j}{m_i+m_j}\cos A)^{-d-\gamma}-1] \; dw > 0.
\end{align*}
Using similar argument as Lemma \ref{CC_I}, we get
\begin{align*}
    (\frac{m_i}{m_i+m_j}\cos a + \frac{m_j}{m_i+m_j}\cos A)^{-d-\gamma}  &\geq \cos^{-d-\gamma}A \geq 1,
\end{align*}
and
\begin{align*}
     1-(\frac{m_i}{m_i+m_j}\cos a + \frac{m_j}{m_i+m_j}\cos A)^{d+\gamma}
     &\lesssim \sin^{2}\frac{a}{2}+\sin^{2}\frac{A}{2}
 \leq 2\sin^{2}\frac{w}{2}
\end{align*} under the condition $\gamma \geq -d.$ This implies that $S_{ji}(|v-v_*|)$ is positive and well-defined.
\end{proof} \vspace{3mm}

\begin{lemma} (Cancellation lemma for $m_i > m_j$) \label{CC_M_2}
Suppose that $B_{ji}(|v-v_*|,\cos a)=|v-v_*|^{\gamma}b_{ji}(\cos a), \;a \in [0,\pi]$
and $\gamma>-d$ in \eqref{B_nc_M'}. Let $f_i$, $f_j$ be the density of particles of mass $m_i$, $m_j$ and $m_i>m_j.$ Then the $Q_{ji}^{ns}(f_j,f_i)(v)$ in \eqref{Qs_Qns_M_2} is written by 
\begin{align} \label{Qns_CC_M_2} 
\begin{split}
    Q^{ns}_{ji}(f_j, f_i)(v) &= \int f_j(v_*)\int B_{ji}(|u-v_*|, \cos a)(\delta(u'-v)f_i(u')-\delta(u-v)f_i(u)) \; d\sigma du dv_*  \\&=\int f_j(v_*) \int \delta(u-v)f_i(u)S'_{ji}(|u-v_*|) \; du
    dv_*
    =f_i(v) \int f_j(v_*)S'_{ji}(|v-v_*|) \;
    dv_*,
\end{split}
\end{align}
where  
\begin{align*}
    S'_{ji}(|v-v_*|)
    &= |\mathbb{S}^{d-2}||v-v_*|^{\gamma}\int_0^\pi b_{ji}(\cos w)\sin^{d-2}w [(\frac{m_j}{m_i+m_j}\cos a + \frac{m_i}{m_i+m_j}\cos A)^{-d-\gamma}-1] \; dw.
\end{align*}
Here, $w \doteq \sin^{-1}(\frac{m_j}{m_i}\sin a)+a \doteq A+a(\; 0<A<\frac{\pi}{2}, \; 0<w,a<\pi \;).$
Moreover, $Q_{ji}^{ns}(f_j,f_i)(v)$ is well-defined and positive.
\end{lemma}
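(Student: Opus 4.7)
The plan is to adapt the argument of Lemma \ref{CC_I} (the inelastic cancellation lemma), since the \emph{weak form} used in \eqref{Qs_Qns_M_2} has exactly the same structure as in the inelastic case: one integrates against $\delta(u'-v)f_i(u') - \delta(u-v)f_i(u)$. The reason we must use this weak-form route (rather than the strong-form change of variables $v_*\to v_*'$ used in Lemma \ref{CC_M_1}) is that when $m_i>m_j$ the Jacobian $|dv_*'/dv_*| = (\frac{m_j}{m_i+m_j})^d(1+\frac{m_i}{m_j}\frac{v-v_*}{|v-v_*|}\cdot\sigma)$ can vanish because $m_i/m_j>1$. However, using the $\delta$-representation and reparametrizing via $u\leftrightarrow u'$ bypasses this issue entirely.

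First I would fix $v_*$ and consider the first term $\int B_{ji}(|u-v_*|,\cos a)f_j(v_*)\delta(u'-v)f_i(u')\,d\sigma\,du$. Writing $k\cdot\sigma = \cos a$ and ${'k}\cdot\sigma=\cos b$ with $k = (u-v_*)/|u-v_*|$, ${'k}=(u'-v_*)/|u'-v_*|$, and applying the law of sines to the triangle of $u$, $v_*$, $u'$ in Figure \ref{Pre_M_F_2}, I would obtain the relation $\sin(a-b) = \frac{m_j}{m_i}\sin b$. Since $m_j/m_i<1$ (this is the crucial point for the $m_i>m_j$ case), the map
\[
    T(\cos b) \;=\; \cos\!\bigl(\sin^{-1}(\tfrac{m_j}{m_i}\sin b)+b\bigr) \;=\; \cos a
\]
is well-defined and strictly monotone on $(0,\pi)$, which makes the inverse collision map $\Psi_\sigma(u')=u$ well-defined. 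This is the analogue of the map used with coefficient $\beta/(2-\beta)$ in Lemma \ref{CC_I}; in particular $|\Psi_\sigma(u)-v_*| = \frac{\sin a}{\sqrt{1-T(\cos a)^2}}\frac{m_i+m_j}{m_j}|u-v_*|$ after renaming.

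Next I would perform the change of variables $u'\to u$ (keeping $v_*$ fixed) with positive Jacobian, followed by the substitution $a\to w := A+a$, where $A := \sin^{-1}(\tfrac{m_j}{m_i}\sin a)\in(0,\pi/2)$, so that $T(\cos a)=\cos w$. Using the trigonometric identities
\[
    \frac{\sin a}{\sin w} = \Bigl(\tfrac{m_j}{m_i}\cos a+\cos A\Bigr)^{-1}, \qquad 1+\tfrac{m_j}{m_i}\cos w = \cos A\Bigl(\tfrac{m_j}{m_i}\cos a+\cos A\Bigr),\qquad \left|\tfrac{dw}{da}\right| = \tfrac{1}{\cos A}\Bigl(\tfrac{m_j}{m_i}\cos a+\cos A\Bigr),
\]
the integrand simplifies to $|v-v_*|^\gamma b_{ji}(\cos w)\sin^{d-2}w\bigl(\frac{m_j}{m_i+m_j}\cos a+\frac{m_i}{m_i+m_j}\cos A\bigr)^{-d-\gamma}$ after multiplying out the $(m_i+m_j)/m_j$ factors (these exactly cancel the power $-d-\gamma$). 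Subtracting the second term $\int B_{ji}(|u-v_*|,\cos a)\delta(u-v)f_i(u)\,d\sigma\,du$ (which, after integrating $\sigma$ into $w$ via $|\mathbb{S}^{d-2}|\int_0^\pi \sin^{d-2}w\,b_{ji}(\cos w)\,dw$) yields the announced $S'_{ji}(|v-v_*|)$.

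Finally I would verify that $S'_{ji}$ is positive and well-defined. Positivity follows because $A\in(0,\pi/2)$ and $m_j/m_i<1$ force $|\cos a|<\cos A$, so $0<\frac{m_j}{m_i+m_j}\cos a+\frac{m_i}{m_i+m_j}\cos A<\cos A\le 1$, giving $(\cdot)^{-d-\gamma}>1$ (here $\gamma>-d$ is used). Integrability at $w=0$ follows from
\[
1-\Bigl(\tfrac{m_j}{m_i+m_j}\cos a+\tfrac{m_i}{m_i+m_j}\cos A\Bigr)^{d+\gamma} \;\lesssim\; \sin^2\tfrac{a}{2}+\sin^2\tfrac{A}{2} \;\le\; 2\sin^2\tfrac{w}{2},
\]
which combined with $b_{ji}(\cos w)\sin^{d-2}w \lesssim |w|^{-1-2s}$ near $0$ gives an integrable singularity since $s<1$. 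The main obstacle I anticipate is purely bookkeeping: carrying the coefficients $m_i/(m_i+m_j)$ and $m_j/(m_i+m_j)$ correctly through the trigonometric simplification and verifying that every Jacobian factor is strictly positive (this is where $m_i>m_j$ matters, symmetrically to how $\beta\in(\tfrac12,1)$ was needed in Lemma \ref{CC_I}); no genuinely new idea beyond Lemma \ref{CC_I} is required.
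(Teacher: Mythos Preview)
Your proposal is correct and follows essentially the same approach as the paper: use the weak-form representation \eqref{Qns_M_2}, apply the law of sines to get $\sin(a-b)=\tfrac{m_j}{m_i}\sin b$, exploit $m_j/m_i<1$ so that both the map $T$ and the Jacobian $|du'/du|=(\tfrac{m_i}{m_i+m_j})^d(1+\tfrac{m_j}{m_i}\cos a)$ are well-defined and positive, then perform the change of variables and the $a\to w$ substitution exactly as in Lemma \ref{CC_I}. One small slip: in your formula for $|\Psi_\sigma(u)-v_*|$ the prefactor should be $\tfrac{m_i+m_j}{m_i}$, not $\tfrac{m_i+m_j}{m_j}$ (compare \eqref{sine_law_M_2}), but this is precisely the bookkeeping you flagged and does not affect the argument.
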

\begin{proof} 
We rewrite $v$ to $u$ and $v'$ instead of $u'$ in \eqref{v'_M'}, then  
\begin{align} \label{u_M}
\begin{split}
     u'= \frac{m_i}{m_i+m_j}u+\frac{m_j}{m_i+m_j}v_*+\frac{m_j}{m_i+m_j}|u-v_*|\sigma,\\
    v_*'=\frac{m_i}{m_i+m_j}u+\frac{m_j}{m_i+m_j}v_*-\frac{m_i}{m_i+m_j}|u-v_*|\sigma.
\end{split}
\end{align}
Let us write $k,{'k}$ and the angles $a,b$ as 
\begin{figure} [t]
\centering
\includegraphics[width=4cm]{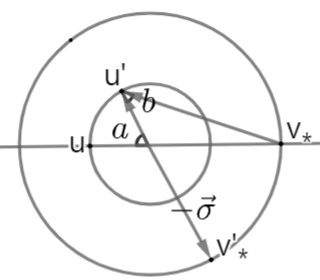}
\caption{The relation of $u,v_*$ and $u'$ \label{CC_M_2_F}}
\end{figure} 
\begin{align*}
    k \cdot {\sigma} \doteq \frac{u-v_*}{|u-v_*|}\cdot {\sigma} = \cos a, \quad
    'k \cdot {\sigma} \doteq \frac{u'-v_*}{|u'-v_*|}\cdot {\sigma} = \cos b,
    \quad
    a,b \in (0,\pi). \text{\quad(See Figure \ref{CC_M_2_F}.)}
\end{align*} 
Using the law of sine in Figure \ref{CC_M_2_F}, we get
\begin{align} \label{sine_law_M_2}
    \frac{\frac{m_i}{m_i+m_j}|u-v_*|}{\sin b} = \frac{|u'-v_*|}{\sin a} =\frac{\frac{m_j}{m_i+m_j}|u-v_*|}{\sin(a-b)}. 
\end{align} Here, $a=b=0$, $a=b=\pi$, and if $a,b \in (0,\pi)$, then $a \neq b.$ From \eqref{sine_law_M_2}, we get $\sin(a-b)=\frac{m_j}{m_i}\sin b$. Since $0<\cos^{-1}(\frac{u-v_*}{|u-v_*|}\cdot \frac{u'-v_*'}{|u'-v_*'|})<\frac{\pi}{2}$, we infer $0<a-b<\frac{\pi}{2}$ and $a-b = \sin^{-1}(\frac{m_j}{m_i}\sin b).$ We define the map $T$ as  
\begin{align*}
    &T(\cos b) = \cos(\sin^{-1}(\frac{m_j}{m_i}\sin b)+b) = \cos a,
\end{align*} then 
\begin{align*}
    &T' = \frac{\sin a}{\sin b}(1+\frac{\frac{m_j}{m_i}\cos b}{\sqrt{1-(\frac{m_j}{m_i})^{2}\sin^{2}a}})>0 
\end{align*} since $m_i>m_j$ and $a,b \in (0,\pi).$ We define $\Psi_\sigma(u')=u$ where $v_*$ is fixed. 
For the first term of $Q^{ns}_{ji}(f_j,f_i)$, we perform change of variables $u \rightarrow u'$. From \eqref{u_M}, Jacobian determinant is obtained by 
\begin{align*} 
    |\frac{du'}{du}|=(\frac{m_i}{m_i+m_j})^d(1+\frac{m_j}{m_i}\frac{u-v_*}{|u-v_*|}\cdot \sigma)>0
\end{align*} and this is always positive since $m_i>m_j.$ Next, we rename $u'$ to $u$. Using similar argument as Lemma \ref{CC_M_1}, then we have that 
\begin{align} 
    &\quad \int B(|u-v_*|,\cos a) \delta(u'-v)f_i(u') \;d\sigma du \label{B_CC_M_2} \\
    &=(\frac{m_i}{m_i+m_j})^{-d}\int B(\frac{\sin a}{\sqrt{1-(T(\cos a))^2}}(\frac{m_i}{m_i+m_j})^{-1}|u-v_*|,\;T(\cos a)) \delta(u-v)f_i(u)(1+\frac{m_j}{m_i}T(\cos a))^{-1} \;d\sigma du \notag \\ 
    &=|\mathbb{S}^{d-2}|\int \int_0^\pi |u-v_*|^{\gamma}b_{ji}(\cos w)\sin^{d-2}w (\frac{m_j}{m_i+m_j}\cos a + \frac{m_i}{m_i+m_j}\cos A)^{-d-\gamma}\delta(u-v)f_i(u) \; dwdu \notag 
\end{align}
\noindent and get $S'_{ji}(|v-v_*|)$ in the statement. Also, we obtain that $Q^{ns}_{ji}(f_j,f_i)$ is well-defined and positive.
\end{proof}

\begin{remark}
Note that if $m_i=m_j$ and the collision occur only between identical particles, then $A=a=\frac{w}{2}$ and $ S_{ji}(|v-v_*|)= S'_{ji}(|v-v_*|)= S(|v-v_*|)$, where
\begin{equation} \label{S_elastic'}
      S(|v-v_*|)=|\mathbb{S}^{d-2}||v-v_*|^{\gamma}\int_0^\pi b(\cos w)\sin^{d-2}w(\cos^{-d-\gamma}\frac{w}{2}-1) \;dw.
\end{equation} The above equation, \eqref{S_elastic'} is same $S(|v-v_*|)$ in Lemma 1 in \cite{AD2000}. \\
\end{remark} 

\subsubsection{Lowerbound for mixture model} 
We prove that $f_i(t,v)$ in \eqref{BT_M} is strictly positive in mixture model. 

\begin{lemma} (Positivity for mixture model) \label{pos_nc_M}
Suppose that $f_i(t,v)$ in \eqref{BT_M} is $C^{\infty}$ function. Then $f_i(t,v)>0$ on $(0,+\infty)\times \mathbb{R}^{d}.$ 
\end{lemma}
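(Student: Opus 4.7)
The plan is to mirror the proof of Lemma \ref{pos_nc_I} component by component. The starting point is the evolution equation $\partial_t f_i(t,v) = \sum_{j=1}^{N} Q_{ji}(f_j,f_i)(v)$. The contribution from $j=i$ is the standard homogeneous elastic Boltzmann operator, for which strict positivity under the $C^\infty$ assumption is classical (e.g.\ p.\,103 of \cite{V2002}); the new ingredient is handling the cross terms $Q_{ji}$ for $j\neq i$, which requires separating the cases $m_i<m_j$ and $m_i>m_j$ and invoking the corresponding cancellation lemmas proved above.

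First I would decompose, for each $j\neq i$, $Q_{ji}(f_j,f_i) = Q^s_{ji} + Q^{ns}_{ji}$ using \eqref{Qs_M_1}--\eqref{Qns_M_1} when $m_i<m_j$ and \eqref{Qs_M_2}--\eqref{Qns_M_2} when $m_i>m_j$. By Lemma \ref{CC_M_1} and Lemma \ref{CC_M_2}, each $Q^{ns}_{ji}(f_j,f_i)(v)$ has the form $f_i(v)\int f_j(v_*) S_{ji}(|v-v_*|)\,dv_*$ (with $S'_{ji}$ in the second case), both non-negative and both carrying the factor $f_i(v)$. Next I would rewrite the gain part of each $Q^s_{ji}$ by applying the same pre-collisional change of variables used inside the proofs of Lemmas \ref{CC_M_1}--\ref{CC_M_2} (namely $\sigma\mapsto v'$ or $u\mapsto u'$ with the mass-dependent Jacobian, followed by $a\mapsto w=A+a$), producing the representation
\begin{equation*}
Q^s_{ji}(f_j,f_i)(v) \;=\; \int f_j(v_*)\int_0^\pi \overline{S}_{ji}(|v-v_*|,w)\bigl(f_i({'v})-f_i(v)\bigr)\,dw\,dv_*,
\end{equation*}
where ${'v}$ denotes the corresponding pre-collisional velocity and $\overline{S}_{ji}\ge 0$, in complete analogy with \eqref{pos_nc_2_I}. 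The contradiction argument is then identical to Lemma \ref{pos_nc_I}: if $f_i(t_0,v_0)=0$, then $(t_0,v_0)$ is a minimum of the non-negative function $f_i$, so $\partial_t f_i(t_0,v_0)=0$; each $Q^{ns}_{ji}(v_0)=0$ because it carries the factor $f_i(v_0)$; and each $Q^s_{ji}(v_0)$ is a non-negative integrand times $f_i({'v})-f_i(v_0)=f_i({'v})\ge 0$, so for $\sum_j Q_{ji}(v_0)=0$ every integrand must vanish a.e. This forces $f_i({'v})=0$ on the image of the collisional parametrization, which exhausts $\mathbb{R}^d$, contradicting $\sum_j M_j>0$.

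The main obstacle is verifying the rewriting of $Q^s_{ji}$ in both sub-cases: the $a\mapsto w$ substitution has a mass-dependent Jacobian whose positivity is exactly the reason the cases $m_i<m_j$ and $m_i>m_j$ had to be split in Lemmas \ref{CC_M_1} and \ref{CC_M_2}, and one must check that the kernel $\overline{S}_{ji}$ inherits the positivity coming from $\gamma>-d$ already used there. A secondary but mostly routine point is confirming that as $(v_*,\sigma)$ varies (and $j$ ranges over indices with $\int f_j > 0$) the pre-collisional velocity ${'v}$ sweeps out a set of full measure in $\mathbb{R}^d$, so that vanishing of $f_i({'v})$ on this image genuinely propagates to $f_i\equiv 0$ at time $t_0$; this follows from the surjectivity of the collision map \eqref{v'_M'} onto $\mathbb{R}^d$ when $v$ is fixed.
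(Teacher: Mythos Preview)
Your strategy is correct and matches the paper's approach: decompose each $Q_{ji}$ into $Q^s_{ji}+Q^{ns}_{ji}$, observe that $Q^{ns}_{ji}(v_0)=0$ because it carries the factor $f_i(v_0)$, and that each $Q^s_{ji}(v_0)\ge 0$ because it is an integral of a non-negative kernel against $f_i(\cdot)-f_i(v_0)\ge 0$; then the minimum condition $\partial_t f_i(t_0,v_0)\le 0$ forces everything to vanish.

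Two small points where the paper is more economical. First, for $m_i<m_j$ you do not need any pre-collisional rewriting at all: the decomposition \eqref{Qs_M_1} already reads $\int B_{ji}\,f_j(v_*')\bigl(f_i(v')-f_i(v)\bigr)\,d\sigma\,dv_*$, which is manifestly $\ge 0$ at $(t_0,v_0)$. (The change of variables in Lemma \ref{CC_M_1} is $v_*\mapsto v_*'$, not $\sigma\mapsto v'$, so it would not produce the form you describe anyway.) Only the case $m_i>m_j$ requires the $u\mapsto u'$, $a\mapsto w$ rewriting into the $\overline{S}'_{ji}$ form. Second, rather than arguing surjectivity of the cross-species collision maps, the paper simply discards all $j\ne i$ contributions (they are $\ge 0$) and keeps only the $j=i$ term, which is the standard mono-species elastic operator; the classical argument then gives $f_i(t_0,v')=0$ for all $v'\in\mathbb{R}^d$, contradicting conservation of the mass of species $i$ (not $\sum_j M_j$).
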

\begin{proof}
The $f_i(t,v)$ is a density of particles of mass $m_i$ for $1 \leq i \leq N$. Assume $m_1<m_2 \cdots < m_N$ WLOG. \vspace{3mm}

\noindent When $m_i>m_j$, recall \eqref{Qs_M_2} and \eqref{Qns_M_2}, 
\begin{align}
    Q_{ji}(f_j,f_i)(v) &= Q^{s}_{ji}(f_j,f_i)(v) + Q^{ns}_{ji}(f_j,f_i)(v) \notag \\ 
     &=\int B_{ji}(|u-v_*|, \cos a)f_j(v_*)\delta(u'-v)(f_i(u)-f_i(u'))\; d\sigma dv_*du \notag \\ 
     &\quad+\int f_j(v_*)\int B_{ji}(|u-v_*|, \cos a)(\delta(u'-v)f_i(u')-\delta(u-v)f_i(u)) \; d\sigma du dv_*. \label{pos_Q_ji_1}
\end{align}
\noindent Taking $f_i(u)$ instead of $f_i(u')$ in \eqref{B_CC_M_2}, then the first term of $Q_{ji}^{s}(f_j,f_i)(v)$ is expressed in $\overline{S}'_{ji}(|v-v_*|,w)$ below. 
Similar as Lemma \ref{pos_nc_I}, we get
\begin{align}
     \eqref{pos_Q_ji_1}
     =\int f_j(v_*) \int \overline{S}'_{ji}(|v-v_*|,w)(f_i('v)-f_i(v)) \;dwdv_* 
+ f_i(v)\int f_j(v_*)S'_{ji}(|v-v_*|)  \;dv_*,   \label{S'_pos_nc_M}
\end{align}
where $\overline{S}'_{ji}(|v-v_*|,w)$ is 
\begin{equation*}
    \overline{S}'_{ji}(|v-v_*|,w)=|\mathbb{S}^{d-2}||v-v_*|^{\gamma}b(\cos w)\sin^{d-2}w(\frac{m_j}{m_i+m_j}\cos a + \frac{m_i}{m_i+m_j}\cos A)^{-d-\gamma}.
\end{equation*}
Here, $S'_{ji}(|v-v_*|)$ is in Lemma \ref{CC_M_2} and $'v$ is pre-velocity and defined 
\begin{align*} 
    v &= \frac{m_i}{m_i+m_j}{'v}+\frac{m_j}{m_i+m_j}{'v_*}+\frac{m_j}{m_i+m_j}|'v-{'v_*}|\sigma, \notag \\
    v_* &=\frac{m_i}{m_i+m_j}{'v}+\frac{m_j}{m_i+m_j}{'v_*}-\frac{m_i}{m_i+m_j}|'v-{'v_*}|\sigma.
\end{align*} \vspace{3mm}

\noindent When $m_i \leq m_j,$ recall \eqref{Qs_M_1} and \eqref{Qns_M_2}, 
\begin{align}
    Q_{ji}(f_j,f_i)(t,v) &= Q^{s}_{ji}(f_j,f_i)(t,v) + Q^{ns}_{ji}(f_j,f_i)(t,v) \notag \\
    &=  \int B_{ji}(|v-v_*|,\cos a)f_j(v_*')(f_i(v')-f_i(v))\;d\sigma dv_* +  f_i(v)\int B_{ji}(|v-v_*|,\cos a)(f_j(v_*')-f_j(v_*)) \; d\sigma dv_* \notag \\ 
    &=  \int B_{ji}(|v-v_*|,\cos a)f_j(v_*')(f_i(v')-f_i(v))\;d\sigma dv_* +  f_i(v)\int f_j(v_*)S_{ji}(|v-v_*|)  \;dv_* \label{S_pos_nc_M},
\end{align} where $S_{ji}(|v-v_*|)$ is in Lemma \ref{CC_M_1}.\vspace{3mm}
\noindent Combining \eqref{S'_pos_nc_M} and \eqref{S_pos_nc_M} , we obtain
\begin{align}
    \partial_t f_i (t,v) &= \sum_{j=1}^{N} Q_{ji}(f_j,f_i) (t,v) 
    =\sum_{j=1}^{i-1}Q_{ji}(f_j,f_i) (t,v) + \sum_{j=i}^{N} Q_{ji}(f_j,f_i) (t,v) \notag \\ 
    &=\sum_{j=1}^{i-1}(Q^{s}_{ji}(f_j,f_i) (t,v)+Q^{ns}_{ji}(f_j,f_i) (t,v)) + \sum_{j=i}^{N} (Q^{s}_{ji}(f_j,f_i) (t,v)+Q^{ns}_{ji}(f_j,f_i) (t,v)) \notag \\ 
    &=\sum_{j=1}^{i-1}(\int f_j(v_*) \int \overline{S}'_{ji}(|v-v_*|,w)(f_i('v)-f_i(v)) \;dwdv_* 
+ f_i(v)\int f_j(v_*)S'_{ji}(|v-v_*|)  \;dv_*) \notag \\ 
&\quad + \sum_{j=i}^{N}(\int B_{ji}(|v-v_*|,\cos a)f_j(v_*')(f_i(v')-f_i(v))\;d\sigma dv_* + f_i(v)\int f_j(v_*)S_{ji}(|v-v_*|)  \;dv_*). \label{last_pos_nc_M}
\end{align}
\noindent 
Now, we can apply contradiction argument which is used in p.103 in \cite{V2002}. Assume that $f(t,v)$ is zero at $(t_0,v_0)$. Then we get
\begin{align} \label{mix_pos_1}
    f_i(v)\int f_j(v_*)S'_{ji}(|v-v_*|)  \;dv_* = 0, \quad \int f_j(v_*) \int \overline{S}'_{ji}(|v-v_*|,w)(f_i('v)-f_i(v)) \;dwdv_*  \geq 0
\end{align}   
at $(t,v)=(t_0,v_0)$ for $1 \leq j \leq i-1$ and 
\begin{align} \label{mix_pos_2}
    f_i(v)\int f_j(v_*)S_{ji}(|v-v_*|)  \;dv_* = 0, \quad \int B_{ji}(|v-v_*|,\cos a)f_j(v_*')(f_i(v')-f_i(v))\;d\sigma dv_* \geq 0
\end{align}
at $(t,v)=(t_0,v_0)$ for $i \leq j \leq N$. 
Applying \eqref{mix_pos_1} and \eqref{mix_pos_2} to  \eqref{last_pos_nc_M}, we get 
\begin{align*}
\partial_t f_{i}(t,v) \geq
    \int B_{ii}(|v-v_*|,\cos a)f_i(v_*')(f_i(v')-f_i(v))\;d\sigma dv_*.
\end{align*}  
Since $f_i(t_0,v) \geq 0$ for all $v$ and $f_i(t_0,v_0)=0$, we get
\begin{align*}
    \partial_t f_{i}(t,v)=0, \quad f_i(v')-f_i(v) \geq 0
\end{align*}
at $(t,v)=(t_0,v_0).$ Then $f(t_0,v')=f(t_0,v_0)=0$ for all $v' \in \mathbb{R}^d$ and it is contradiction. 
\end{proof} \vspace{3mm}

The spreading lemma in elastic (mono-species) model has been proved in Lemma 3.4 in \cite{IM2020} and we extend the result to mixture model.

\begin{lemma} (Spreading lemma for mixture model) \label{spreading_M} Consider $T_0 \in (0,1).$
Suppose that $\gamma<0$ and $\gamma+2s \in [0,2]$ (moderately soft potentials) and $f_i(t,v)$ is the solution in \eqref{BT_M}. 
If $f_i(t,v) \geq l$ on $t \in [0,T_0],\; v\in B_R(0)$ for some $l > 0, R > 1,$ then there exists constant $C>0$ depending on $d, s, M_0, E_0$ such that
\begin{align*}
    f_i(t,v) \geq C\; min(t, R^{-\gamma}\epsilon^{2s})\; \epsilon^q R^{d+r}l^2, \quad \forall t\in[0,T_0],\; \forall v\in B_{\sqrt{2}(1-\epsilon)R}(0)
\end{align*}
for any $\epsilon \in (0, 1-\frac{1}{\sqrt{2}})$ that satisfied with $\epsilon^{q}R^{d+r}l < 1/2$ and $R\epsilon < 1.$
\end{lemma}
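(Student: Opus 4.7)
The plan is to run exactly the barrier/contradiction argument of Lemma \ref{spreading_I}, but driven by the diagonal term $Q_{ii}(f_i,f_i)$ (elastic, mono-species), and to absorb all off-diagonal and nonsingular contributions using the positivity established in the cancellation lemmas. Consequently the geometric expansion factor comes from the elastic case $\beta=1$, which is precisely the $\sqrt{2}$ appearing in the statement.

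First I would fix a smooth radial cutoff $\phi_{R,\epsilon}$ on $\mathbb{R}^d$ with $\phi_{R,\epsilon}\equiv 1$ on $B_{\sqrt{2}(1-\epsilon)R}(0)$ and $\phi_{R,\epsilon}\equiv 0$ outside $B_{\sqrt{2}(1-\epsilon/2)R}(0)$, built from a profile satisfying \eqref{f_condition}, so that $\|\phi_{R,\epsilon}\|_{L^\infty}$, $\|\nabla\phi_{R,\epsilon}\|_{L^\infty}$, $\|\nabla^2\phi_{R,\epsilon}\|_{L^\infty}$ obey \eqref{phi condition} (with the $R\epsilon<1$ hypothesis). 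I then aim to show that $f_i(t,v)\geq \tilde l(t)\phi_{R,\epsilon}(v)$ for
\[
\tilde l(t)=\alpha\,\epsilon^{q}R^{d+\gamma}l^{2}\,\frac{1-e^{-cR^{\gamma}\epsilon^{-2s}t}}{cR^{\gamma}\epsilon^{-2s}},
\]
with $\alpha>0$ small, where $c$ will be determined by the $Q^s$-estimates below. If the inequality failed, there would be a first contact point $(t_0,v_0)\in[0,T_0]\times\operatorname{supp}\phi_{R,\epsilon}$ where equality holds and $\partial_t(f_i-\tilde l\phi_{R,\epsilon})(t_0,v_0)\leq 0$.

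At the contact point I would use the decomposition $\partial_t f_i=\sum_{j}(Q^{s}_{ji}(f_j,f_i)+Q^{ns}_{ji}(f_j,f_i))$. By Lemmas \ref{CC_M_1} and \ref{CC_M_2} every $Q^{ns}_{ji}(f_j,f_i)$ is nonnegative, so
\[
\tilde l'(t_0)\geq \sum_{j=1}^{N} Q^{s}_{ji}(f_j,f_i)(t_0,v_0)=\sum_{j=1}^{N}\Bigl(Q^{s}_{ji}(f_j,f_i-\tilde l\phi_{R,\epsilon})+\tilde l\,Q^{s}_{ji}(f_j,\phi_{R,\epsilon})\Bigr)(t_0,v_0).
\]
The Carleman kernels in \eqref{Kf_j_M_1} and \eqref{Kf_M_2} are nonnegative, hence $Q^{s}_{ji}(f_j,f_i-\tilde l\phi_{R,\epsilon})(t_0,v_0)\geq 0$ for every $j$ since $f_i-\tilde l\phi_{R,\epsilon}\geq 0$. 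For the second sum I apply Lemmas \ref{Q_est_M_1} and \ref{Q_est_M_2} (together with Lemma 2.3 of \cite{IM2020} for $j=i$) to get $|Q^{s}_{ji}(f_j,\phi_{R,\epsilon})(v_0)|\lesssim R^{\gamma}\epsilon^{-2s}$ uniformly in $j$, contributing $-cR^{\gamma}\epsilon^{-2s}\tilde l(t_0)$ overall.

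The main lower bound then comes from the single term $Q^{s}_{ii}(f_i,f_i-\tilde l\phi_{R,\epsilon})(t_0,v_0)$, which is the \emph{elastic mono-species} Carleman integral with kernel $K_{f_i}$. Restricting the $u$ and $v_*$ integrations to $B_R(0)$ gives $f_i(u)-\tilde l\phi_{R,\epsilon}(u)\geq l/2$ (using $\epsilon^{q}R^{d+\gamma}l<1/2$), and I bound $|u-u'|\lesssim R$, $|v_*-P|\gtrsim \epsilon^{2}R$. The elastic version of Lemma \ref{region_est_I} (namely $\beta=1$, so $\sqrt{1+\beta^{2}}=\sqrt{2}$; this is exactly the region estimate used in Lemma 3.4 of \cite{IM2020}) provides
\[
\int \delta(u'-v_0)\int\chi_{\{|u|\leq R\}}\int_{v_*\in E_{Pu'}}\chi_{\{|v_*|\leq R\}}\,dv_*\,du\,du'\gtrsim R^{2d-1}\epsilon^{d},
\]
which yields $Q^{s}_{ii}(f_i,f_i-\tilde l\phi_{R,\epsilon})(t_0,v_0)\gtrsim C'\epsilon^{q}R^{d+\gamma}l^{2}$. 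Combining this with the differential inequality $\tilde l'(t_0)=\alpha\epsilon^{q}R^{d+\gamma}l^{2}-cR^{\gamma}\epsilon^{-2s}\tilde l(t_0)$ contradicts the assumption for $\alpha$ chosen smaller than $C'$, finishing the proof.

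The main obstacle I anticipate is bookkeeping the off-diagonal contributions: showing that the $Q^{s}_{ji}(f_j,\phi_{R,\epsilon})$ for $j\neq i$ produce only an error of the same order $R^{\gamma}\epsilon^{-2s}$ as the diagonal estimate requires the full generality of Lemmas \ref{Q_est_M_1}--\ref{Q_est_M_2} (both mass configurations $m_i\lessgtr m_j$), and checking that the nonnegativity of the cross-species Carleman kernels $K'_{f_j}$ (not just $K_{f_j}$) is enough to discard $Q^{s}_{ji}(f_j,f_i-\tilde l\phi_{R,\epsilon})$ without losing a factor that would spoil the iteration step following this lemma.
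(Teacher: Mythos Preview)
Your proposal is correct and follows essentially the same route as the paper: decompose into $Q^s+Q^{ns}$, drop the nonnegative $Q^{ns}_{ji}$ terms via Lemmas~\ref{CC_M_1}--\ref{CC_M_2}, control $\sum_j Q^s_{ji}(f_j,\phi_{R,\epsilon})$ by $cR^{\gamma}\epsilon^{-2s}$ using Lemmas~\ref{Q_est_M_1}--\ref{Q_est_M_2}, discard the off-diagonal $Q^s_{ji}(f_j,f_i-\tilde l\phi_{R,\epsilon})$ by positivity of the Carleman kernels, and extract the main gain from the diagonal elastic term $Q^s_{ii}$ via Lemma~3.4 of \cite{IM2020}. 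The paper invokes time-reversibility of the mixture collision to rewrite the $m_i>m_j$ Carleman integrals before discarding them, whereas you observe more directly that $K_{f_j},K'_{f_j}\geq 0$ and $f_i-\tilde l\phi_{R,\epsilon}\geq 0$ at the first contact time suffice; this is a harmless simplification and does not change the argument.
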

\begin{proof}
We will obtain this lemma using Lemma \ref{Q_est_M_1}, \ref{Q_est_M_2}, \ref{CC_M_1}, and \ref{CC_M_2}. \vspace{3mm}

\noindent First, we refer to \eqref{f_condition} and derive $\phi_{R,\epsilon}(u)$,
\begin{align*}
    \phi_{R,\epsilon}(u) =
     \left\{ \begin{array}{rcl}1 & \mbox{for} \; |u| \leq \sqrt{2}R(1-\epsilon)\\
0 & \mbox{\quad for} \;|u|> \sqrt{2}R(1-\epsilon/2)
\end{array}\right.
\end{align*}
such that \eqref{phi condition}.
By \eqref{Q_est_1_M_1} and \eqref{Q_est_1_M_2}, there exists constant $c>0$ such that 
\begin{align}
  |Q_{ji}^{s}(f_j,\phi_{R,\epsilon})(v)| 
    &\lesssim
  \|\phi_{R,\epsilon}\|_{L^{\infty}}^{1-s}(\text{max}\{\|\nabla^2 \phi_{R,\epsilon}\|_{L^\infty}, \|\nabla \phi_{R,\epsilon}\|_{L^\infty} \})^{s}(1+|v|)^{\gamma+2s} \notag \\
  &= \|\phi_{R,\epsilon}\|_{L^{\infty}}^{1-s}\|\nabla^2 \phi_{R,\epsilon}\|_{L^\infty}^{s}(1+|v|)^{\gamma+2s}
  \leq \frac{c}{N}R^\gamma \epsilon^{-2s} \label{Qs_est_spreading_M}
\end{align} for $1 \leq j \leq N$. \vspace{4mm}

\noindent We want to prove $f_i(t,v)\geq \tilde{l}(t)\phi_{R,\epsilon}(v)$ where $\tilde{l}(t)$ is in \eqref{l_spreading_I}. If the inequality was not true, there exists $(t_0,v_0)\in [0,T_0]\times \mbox{supp}\;\phi_{R,\epsilon}$ such that $f_i(t_0,v_0)=\tilde{l}(t_0)\phi_{R,\epsilon}(v_0)$ and $\partial_t(f_i-\tilde{l}(t)\phi_{R,\epsilon}(v))\leq 0$ at $(t_0,v_0).$ Using the fact that $Q^{ns}_{ji}(f_j,f_i)(v)>0$ and  \eqref{Qs_est_spreading_M}, we have that
\begin{align}
    \tilde{l}'(t_0) \;\; &\geq \tilde{l}'(t_0)\phi_{R,\epsilon}(v_0) \geq \partial_t f_i(t_0,v_0) \notag \\
     &\geq \sum_{j=1}^{N} Q_{ji}^{s}(f_j,f_i) (t_0,v_0)
     = \sum_{j=1}^{N} Q_{ji}^{s}(f_j,f_i-\tilde{l}\phi_{R,\epsilon}) (t_0,v_0)
    +\tilde{l}(t_0)\sum_{j=1}^{N} Q_{ji}^{s}(f_j,\phi_{R,\epsilon}) \notag \\
    &\geq \sum_{j=1}^{N} Q_{ji}^{s}(f_j,f_i-\tilde{l}\phi_{R,\epsilon}) (t_0,v_0)- cR^{\gamma}\epsilon^{-2s}\tilde{l}(t_0)   \label{2_spreading_M}. 
\end{align} 
\noindent Restricting $v'$ and $v_*'$ to $B_R(0),$ we get $f_i(v')-\tilde{l}(t_0)\phi_{R,\epsilon}(v') \geq l/2$ since assuming $\epsilon^{q}R^{d+r}l < 1/2$. Because the collision operator in mixture model is time reversible, \eqref{K_f_M_2_spreading_M} holds. For $1 \leq j \leq i-1$, we obtain  
\begin{align} \label{K_f_M_2_spreading_M}
    Q_{ji}^{s}(f_j,f_i-\tilde{l}\phi_{R,\epsilon})(t_0,v_0) &= \int K'_{f_j}(u,u')\delta(u'-v_0)(f_i(u)-\tilde{l}(t_0)\phi_{R,\epsilon}(u)) \;dudu' \notag \\
    &= \int K'_{f_j}('v,v_0)(f_i('v)-\tilde{l}(t_0)\phi_{R,\epsilon}('v) \;d'v \notag \\
    &= \int K'_{f_j}(v',v_0)(f_i(v')-\tilde{l}(t_0)\phi_{R,\epsilon}(v')) \;dv' >0,
\end{align} where $K'_{f_j}(u,u')$ is in \eqref{Kf_M_2}.  For $i \leq j \leq N$, we obtain 
\begin{align} \label{K_f_M_1_spreading_M}
    Q_{ji}^{s}(f_j,f_i-\tilde{l}\phi_{R,\epsilon})(t_0,v_0)=\int K_{f_j}(v_0,v')(f_i(v')-\tilde{l}(t_0)\phi_{R,\epsilon}(v'))\; dv' > 0,
\end{align}  where $K_{f_j}(v_0,v')$ is in \eqref{Kf_j_M_1}. 
Applying \eqref{K_f_M_2_spreading_M} and \eqref{K_f_M_1_spreading_M} to \eqref{2_spreading_M}, we get 
\begin{align}
     \tilde{l}'(t_0) \geq Q^{s}_{ii}(f_i, f_i-\tilde{l}\phi_{R,\epsilon}) (t_0,v_0) - cR^{\gamma}\epsilon^{-2s}\tilde{l}(t_0) \label{4_spreading_M}
\end{align} on $|v'|<R$ and $|v'_*|<R$.  \vspace{3mm}

\noindent Using the equation $\tilde{l}'(t_0)=\alpha\;\epsilon^q R^{d+r}l^2-cR^{\gamma}\epsilon^{-2s}\tilde{l}(t_0)$, we get  
\begin{align}
   \alpha\;\epsilon^q R^{d+r}l^2
   &\gtrsim \int \frac{f_i(v')-\tilde{l}(t_0)\phi_{R,\epsilon}(v')}{|v'-v_0|^{d+2s}}\int_{v_*'\in E_{Pv'}} |v_0-v_*'|^{\gamma+2s+1}f_i(v_*')\;\chi_{\{|v'|\leq R\}}\chi_{\{|v_*'|\leq R\}}\; dv_*'dv'\notag \\ 
   &\gtrsim  R^{-d-2s}l^2 \int \chi_{\{|v'|\leq R\}}\;\int_{v_*'\in E_{Pu'}}|v_0-v_*'|^{\gamma+2s+1} \chi_{\{|v_*'|\leq R\}} \, dv_*' dv' \notag \notag \\  
    &\geq C'\epsilon^q R^{d+\gamma}l^2  \label{mono_sp_ineq}
\end{align} on $v_0<\sqrt{2}R(1-\epsilon)$ for some constant $C'>0.$ There is only general collision term in RHS of \eqref{4_spreading_M}. So, we conclude \eqref{mono_sp_ineq} from Lemma 3.4 in \cite{IM2020} in elastic mono-species model. It is contradiction. 
\end{proof} \vspace{3mm}

\begin{proof} [Proof of Theorem \ref{1.2}]
For any $T_0 \in (0,1),$ we define 
\begin{align*} 
     &T_n = (1-\frac{1}{2^{n}})T_0, \\
      &\epsilon_n=\frac{1}{2^{n+1}}, \\
       &R_{n+1}=\sqrt{2}(1-\epsilon_n)R_n, \;\; R_0=1.
\end{align*}
By Lemma \ref{pos_nc_M}, there exists some constant $l_0 \in (0,1)$ such that $f_i(t,v) \geq l_0$ when $t \in [T_1,T_0], v \in B_{R_0}(0)$ under the condition that $f_i(t,v) \in C^{\infty}.$ For $v \in B_{R_n}(0)$, 
we assume that $f_i(t,v) \geq l_n$ when $t \in [T_{n+1},T_0], \; v \in B_{R_n}(0)$. We check $\epsilon_n^q R_n^{d+\gamma}l_n < 1/2$ and $R_n\epsilon_n<1$. 
Then, by Lemma \ref{spreading_M}, we get \;$f_i(t,v) \geq l_{n+1}$ when $t \in [T_{n+2},T_0], \; v \in B_{R_{n+1}}(0)$ and $l_{n+1} \geq K l_n^{2}$ for some constant $K$ depending on $d, s, M_0, E_0$ and $t.$ Now, similar as Proposition 3.6 in \cite{IM2020}, we get the Gaussian lowerbound.
\end{proof} \vspace{2mm}

\subsection{Cutoff collision kernel}
For $v \in \mathbb{R}^3$, the collision operator is written by
\begin{align*} 
    Q_{ji}(f_j, f_i) &(t,v) = \int B_{ji}(v-v_*, \sigma)(f_j(v_*')f_i(v')-f_j(v_*)f_i(v)) \; dn dv_*, \\  &\text{where} \; \cos\theta = \langle\frac{v-v_*}{|v-v_*|}, n\rangle, \quad \theta \in [0, \frac{\pi}{2}].
\end{align*} Here, $v'$ and $v_*'$ are given by \eqref{v'_M}. We define 
\begin{align*}
     B_{ji}(|v-v_*|,\theta) = h_{ji}(\theta)|v-v_*|^\gamma ,\quad \int_0^{\pi/2}h_{ji}(\theta) \;d\theta < +\infty
\end{align*} for hard potentials, $\gamma \in [0,1].$\\
We split  $Q_{ji}(f_j,f_i)$ into the gain term $Q_{ji}^{+}(f_j,f_i)$ and the loss term $Q_{ji}^{-}(f_j,f_i),$
\begin{align*} 
    Q_{ji}(f_j,f_i)(v)&=\int B(|v-v_*|,\theta)f_j(v'_*)f_i(v')\; dndv_* - \int B(|v-v_*|,\theta)f_j(v_*)f_i(v)\; dndv_*\\
    &\doteq Q_{ji}^{+}(f_j,f_i)(v)-Q_{ji}^{-}(f_j,f_i)(v). \notag
\end{align*} 

\noindent The loss term is written by 
\begin{align*}
    Q_{ji}^{-}(f_j,f_i)(t,v) = f_i(v)\int B_{ji}(|v-v_*|,\theta)f_j(v_*) \; dndv_* \doteq f_i(t,v) L(f_j)(t,v),
\end{align*}
and $L(f_j)(t,v)$ is estimated by  
\begin{align} \label{L_f_upper_I}
    L(f_j)(t,v) = \int h_{ji}(\theta)|v-v_*|^{\gamma}f_j(v_*) \; dn dv_* \leq C_1(1+|v|^{\gamma})
\end{align}
for some constant, $C_1>0.$ Here, the constant $C_1$ depends on $M_0, E_0$ and $\gamma$.
 Using (2.11) and (2.12) in \cite{AO2022}, we get
\begin{align*}
    \sum_{k=1}^{N} \int f_k(v) \log f_k(v) \; dv \leq \sum_{k=1}^{N} \int f_k(0,v) \log f_k(0,v) \; dv = H_0
\end{align*} and 
\begin{align*}
     - \sum_{k=1}^{N} \int_{f_k \leq 1} f_k(v) \log f_k(v) \; dv \leq C'\; (\sum_{k=1}^{N} \int f(0,v)(1+|v|^2)\; dv)^{3/4}.
\end{align*} Now, we apply above inequalities to \eqref{flogf_M}. Thus,
\begin{align}
    \int_{f_j >1} f_j(v) \log f_j(v) \; dv &\leq
    \sum_{k=1}^{N} \int_{f_k >1} f_k(v) \log f_k(v) \; dv \notag \\
    &= \sum_{k=1}^{N} \int f_k(v) \log f_k(v) \; dv
     - \sum_{k=1}^{N} \int_{f_k \leq 1} f_k(v) \log f_k(v) \; dv  \label{flogf_M} \\ 
    &\leq \sum_{k=1}^{N} \int f_k(0,v) \log f_k(0,v) \; dv + C' (\sum_{k=1}^{N} \int f(0,v)(1+|v|^2)\; dv)^{3/4} \notag \\
      &\leq C_2 \label{C_2_log_M}
\end{align}
for some constants $C', C_2>0.$ Here, the constant $C_2$ depends on $M_0,E_0$ and $H_0$.
From \eqref{C_2_log_M}, we can apply Lemma 4 in \cite{L1983} and get
\begin{align} \label{L_f_lower_I}
      L(f_j)(t,v) \geq C_3 (1+|v|^{\gamma}) 
\end{align} for some constant $C_3>0.$ Here, the constant $C_3$ depends on $M_0, H_0, E_0$ and $\gamma$.
Recall the mixture Boltzmann equation \eqref{BT_M}, 
\begin{align} 
    \partial_t f_i (t,v) &= \sum_{j=1}^{N} Q_{ji}(f_j,f_i) (t,v) \notag = \sum_{j=1}^{N}(Q_{ji}^{+}(f_j,f_i) (t,v)-Q_{ji}^{-}(f_j,f_i) (t,v)) \notag \\
    &= \sum_{j=1}^{N} Q_{ji}^{+}(f_j,f_i) (t,v) - f_i(v) \sum_{j=1}^{N} L(f_j)(t,v) \label{BT_M_L}
\end{align} for $v \in \mathbb{R}^3, \; t \in \mathbb{R^{+}}.$ Let $f_i(t,v)$ be a density of particles of mass $m_i$ at time $t$ and velocity $v$ for $1 \leq i \leq N$ and assume $m_1<m_2< \cdots < m_N.$ From \eqref{BT_M_L}, we obtain
\begin{align} \label{duhamel_M}
    f_i(t,v) 
    &={G_{0}^{t}}(v) f_i(0,v) + \sum_{j=1}^{N} \int_0^t G_{\tau}^{t}(v)\;Q_{ji}^{+}(f_j(\tau,.),f_i(\tau,.)(v))\; d\tau, 
\end{align}
where 
\begin{align*}
    G_{t_1}^{t_2}(v) \doteq e^{-\sum_{j=1}^{N} {\int_{t_1}^{t_2}} L(f_j)(\tau, v)d\tau}.
\end{align*}
By \eqref{L_f_upper_I} and \eqref{L_f_lower_I}, there are some constants $C_4,\; C>0$ depending on $M_0, H_0, E_0$ and $\gamma$ such that
\begin{align*}
    C_4(1+|v|^{\gamma}) \leq \sum_{j=1}^{N} L(f_j)(t,v) \leq C(1+|v|^{\gamma}).
\end{align*} If $|v|<R$ for some $R>0,$ then we get
\begin{align} \label{bar_G}
    G_{t_1}^{t_2}(v) \geq e^{-C(t_2-t_1) (1+R^\gamma)}\doteq \overline{G}_{t_1}^{t_2}(R).
\end{align} \vspace{3mm}

\begin{proof} [Proof of Theorem \ref{1.3}]
From \eqref{duhamel_M}, we get inequality, 
\begin{align} \label{positive_M}
    f_i(t,v)  &= G_{0}^{t}(v) f_i(0,v) + \sum_{j=1}^{N} \int_0^t G_{\tau}^{t}(v)\;Q_{ji}^{+}(f_j(\tau,.),f_i(\tau,.)(v))\; d\tau \notag\\
    &\geq 
    \int_0^t G_{\tau}^{t}(v)\;Q_{ii}^{+}(f_i(\tau,.),f_i(\tau,.)(v))\; d\tau,
\end{align}
where $Q_{ii}^{+}(f_i,f_i)(v)$ is general collision operator of elastic mono-species particles. Using \eqref{bar_G} and \eqref{positive_M}, there are $\epsilon_0,\delta_0,R$ and $\Bar{v} \in \mathbb{R}^3$ such that 
\begin{align*}
    f_i(t_0,v) \geq  \epsilon_0 \quad \text{for} \quad |v-\Bar{v}|<\delta_0, \;\; |\Bar{v}|<R
\end{align*} for any positive $t_0$ from Lemma 3.1 in \cite{AB1996}.  For $|v|<\sqrt{2}\delta_0(1-\gamma_1),$ this inequality also holds
\begin{align*}
    f_i(t_0+t_1,v) 
    &\geq \int_{t_1}^{t_0+t_1} G_{\tau}^{t}(v)\;Q_{ii}^{+}(f_i(\tau,.),f_i(\tau,.)(v))\; d\tau \\
    &\geq  C \; t_1 e^{-2ct_1 2^{\frac{\gamma}{2}}\delta_0^{\gamma}} \delta_0^{3+\gamma}\gamma_1^{\frac{5}{2}}\epsilon_0^2
\end{align*}
from Lemma 3.2 in \cite{AB1996}. Similar as Theorem 1.1 in \cite{AB1996} and we get the Gaussian lowerbound. 
\end{proof}

\section{Acknowledgements}
GA and DL are supported by the National Research Foundation of Korea(NRF) grant funded by the Korea government(MSIT)(No. NRF-2019R1C1C1010915). 

\bibliographystyle{abbrv}
\nocite{*}
\bibliography{reference.bib}
\end{document}